\DeclareMathOperator*{\Ran}{Ran}
\DeclareMathOperator*{\Ker}{Ker}
\DeclareMathOperator*{\R}{Re}
\DeclareMathOperator*{\diag}{diag}
\newcommand{\inv}{^{-1}}
\newcommand{\<}{\langle}
\renewcommand{\>}{\rangle}
\newcommand{\dd}{\mathrm{d}}
\newcommand{\B}{\mathcal{B}}
\newcommand{\RR}{\mathbb{R}}
\newcommand{\C}{\mathbb{C}}
\newcommand{\T}{(T(t))_{t\ge 0}}
\newcommand{\ga}{\alpha}
\newcommand{\gl}{\lambda}
\newcommand{\iprod}[2]{\langle#1,#2\rangle}
\newcommand{\norm}[1]{\|#1\|}
\newcommand{\abs}[1]{|#1|}
\newtheorem{thm}{Theorem}[section]
\newtheorem{prp}[thm]{Proposition}
\newtheorem{lem}[thm]{Lemma}
\newtheorem{cor}[thm]{Corollary}
\theoremstyle{definition}
\newtheorem{rem}[thm]{Remark}
\newtheorem{ex}[thm]{Example}
\numberwithin{equation}{section}
\begin{document}

\title{Stability of abstract coupled systems}

\author[S.\ Nicaise]{Serge Nicaise}
\address[S.\ Nicaise]{Universit\'e Polytechnique Hauts-de-France, CERAMATHS/DEMAV, Valenciennes, France}
\email{serge.nicaise@univ-valenciennes.fr}

\author[L.~Paunonen]{Lassi Paunonen}
\address[L.~Paunonen]{Mathematics Research Centre, Tampere University, P.O.~ Box 692, 33101 Tampere, Finland}
 \email{lassi.paunonen@tuni.fi}

\author[D. Seifert]{David Seifert}
\address[D. Seifert]{School of Mathematics, Statistics and Physics, Newcastle University, Herschel Building, Newcastle upon Tyne, NE1 7RU, UK}
\email{david.seifert@ncl.ac.uk}

\begin{abstract}
We study stability of abstract differential equations  coupled by means of a general algebraic condition. Our approach is based on  techniques from operator theory and systems theory, and it allows us to study coupled systems by exploiting  properties of the  components, which are typically much simpler to analyse. As our main results we establish resolvent estimates and decay rates for abstract boundary-coupled systems. We illustrate the power of the general results by using them to obtain rates of energy decay in coupled systems of one-dimensional wave and heat equations, and in a multi-dimensional wave equation with an acoustic boundary condition.
\end{abstract}

\subjclass[2010]{
  47D06, % one-parameter semigroups linear evolution equations
47A10, % Operator theory: spectrum, resolvent
34G10, % ODEs -> ODEs on abstract spaces -> Linear differential equations in abstract spaces
35B35 %PDEs -> qualitative properties of solutions -> stability 
 %93B52 %Feedback control
 (35M30,  % PDEs -> Partial differential equations of mixed type and mixed-type systems of partial differential equations -> Mixed-type systems of PDEs
35L05, % PDE -> Wave equation
93D15)% Systems theory -> Stabilization of systems by feedback 
}
\keywords{
Boundary-coupled systems, 
operator semigroups,
stability, 
resolvent estimates,
boundary nodes,
energy decay rates,
wave-heat network,
 dynamic boundary condition}

 \thanks{The authors gratefully acknowledge financial support from the Research Council of Finland grant number 349002 held by L. Paunonen,  the Maupertuis programme of the Institut Français in Finland, and COST Action 
CA18232.}

\maketitle

\section{Introduction}\label{sec:intro}

Motivated by applications to boundary-coupled systems of partial differential equations (PDEs), we introduce new abstract tools for establishing  stability of coupled linear evolution equations. 
We focus on  the case of coupled systems consisting of two distinct parts, which in concrete situations will often be different in nature.
In particular, our results are applicable to systems in which  
a hyperbolic equation is coupled, along a shared boundary of the spatial domains, with a parabolic equation.
This scenario arises especially in \emph{thermoelastic systems} and in models describing \emph{fluid-structure interactions}.
More precisely, we consider a class of models which can be expressed
in the abstract form
\begin{equation}\label{eq:coupled_sys_intro}
\left\{\begin{aligned}
	  \dot{z}_1(t) &= L_1 z_1(t),  \qquad&t\ge0,\\
	  	  \dot{z}_2(t) &= L_2 z_2(t),  \qquad&t\ge0,\\
	  G_1 z_1(t) &=K_2z_2(t), &t\ge0,\\
	  G_2z_2(t) &= -K_1 z_1(t), &t\ge0,\\
	  z_1(0)&\in X_1,\ z_2(0)\in X_2.
	\end{aligned}\right.
\end{equation}
Here the first two lines define evolution equations on two Hilbert spaces $X_1$ and $X_2$, respectively, and the algebraic equations capture the interaction between the two systems. 
In the case of boundary-coupled PDEs, $L_1$ and $L_2$ are differential operators, and $G_j,K_j\colon D(L_j)\subseteq X_j\to U$, where $j=1,2$ and $U$ is another Hilbert space, 
are boundary trace operators representing the boundary coupling in the PDE model.

The standard approach to establishing well-posedness and stability of 
coupled PDE systems
 involves recasting the full  system as an abstract Cauchy problem on a suitably chosen Hilbert space, and using the theory of strongly continuous operator semigroups to establish  existence of solutions and study their asymptotic behaviour.
The required analysis is typically fairly involved, even for simple coupled  models.
In this paper, we take a \emph{divide-and-conquer} approach,
deducing stability of the coupled system  
 from properties of the two constituent parts, which in typical applications are either well understood or at least comparatively simple to analyse.

Our main assumptions are that the operators in~\eqref{eq:coupled_sys_intro} are chosen in such a way that the operator triples $(G_1,L_1,K_1)$ and $(G_2,L_2,K_2)$ form \emph{boundary nodes} in the sense defined in Section~\ref{sec:prelim}, and they are \emph{impedance passive} in the sense that
\begin{equation}
\label{eq:pass_intro}
\R\<L_jx,x\>_{X_j}\le\R\<G_jx,K_jx\>_U,\qquad x\in D(L_j),\ j=1,2.
\end{equation}
While these assumptions,  especially  impedance passivity, certainly place a restriction on  the class of boundary couplings that can be considered in~\eqref{eq:coupled_sys_intro}, condition~\eqref{eq:pass_intro} 
is in fact satisfied
 for a large class of physically relevant models.
These assumptions ensure that the system~\eqref{eq:coupled_sys_intro} is well posed, with solutions described by a strongly continuous semigroup of contractions.

Our main results relate the stability properties of the coupled system~\eqref{eq:coupled_sys_intro} 
 to those of the system
\begin{equation}\label{eq:damped_intro}
	\left\{\begin{aligned}
	  \dot{z}(t) &= L_1 z(t),  \qquad&t\ge0,\\
	  G_1 z(t) &= -K_1 z(t), &t\ge0,\\
	  z(0)&\in X_1.
	\end{aligned}\right.
\end{equation}
Observe that~\eqref{eq:damped_intro} 
contains the dynamics of the state $z_1$ appearing in
the first equation
 in~\eqref{eq:coupled_sys_intro} after decoupling from 
the state $z_2$.
The decoupling is completed by replacing the original coupling condition with the new algebraic constraint $z(t)\in\Ker(G_1+ K_1)$ for all $t\ge0$, which in typical PDE applications has the interpretation of \emph{boundary damping}.
In order to capture the contribution 
 of the second equation in~\eqref{eq:coupled_sys_intro} to the overall dynamics, we define $A_2\colon D(A_2)\subseteq X_2\to X_2$ to be the restriction of $L_2$ to $D(A_2)=\Ker G_2$.
Then $A_2$ is the 
generator of the semigroup describing the dynamics of $z_2$ in~\eqref{eq:coupled_sys_intro} with the homogeneous boundary condition $G_2z_2(t)=0$ for $t\geq 0$.
 We furthermore make use of the so-called \emph{transfer function} $P_2$ of the boundary node $(G_2,L_2,K_2)$, which maps $\gl\in \rho(A_2)$ to the bounded linear operator $P_2(\gl)$ on $U$ defined by  $P_2(\gl)u = K_2 x$ for $u\in U$, where
$x$ is the unique element $x\in D(L_2)$ such that
\[
(\gl-L_2)x=0 \qquad \mbox{and} \qquad G_2x = u.
\]
The following theorem contains a version of our first main result.
In Theorem~\ref{thm:res} below we shall present a more general result,  which does not require
 $ \norm{(is-A_2)\inv}$ or $ \norm{P_2(1+is)}$ to be uniformly bounded with respect to $s\in\RR$, and which can be used to obtain resolvent estimates on proper subsets of $i\RR$.
 We will also allow the space $U$ to be different for the two boundary nodes $(G_1,L_1,K_1)$ and $(G_2,L_2,K_2)$.

\begin{thm}\label{thm:main_intro}
Let $(G_1, L_1, K_1)$ and $(G_2,L_2,  K_2)$ be two impedance passive boundary nodes and let $A_0$ denote the restriction of $L_1$ to $\Ker (G_1+K_1)$.
 Suppose that $i\RR\subseteq \rho(A_0)\cap \rho(A_2)$
and let $M_0\colon \RR\to [r,\infty)$, for some $r>0$, be such that 
$$\|(is-A_0)\inv\|\le M_0(s),\qquad s\in \RR.$$
Suppose further that there exists a function
 $\eta\colon \RR\to(0,\infty)$ such that 
$$\R \iprod{P_2(is)u}{u}_U\geq \eta(s) \norm{u}^2, \qquad u\in U,~ s\in\RR$$
and that $ \norm{(is-A_2)\inv}$ and $ \norm{P_2(1+is)}$ are uniformly bounded with respect to $s\in\RR$.
Then~\eqref{eq:coupled_sys_intro} can be reformulated as an abstract Cauchy problem
on the Hilbert space $X=X_1\times X_2$,
 the corresponding system operator
 $A$ generates a contraction semigroup on $X$,  and satisfies
 $i\RR\subseteq\rho(A)$ and
$$
\|(is-A)\inv\|\lesssim  \frac{ M_0(s)}{\eta(s)},\qquad s\in \RR.
$$
\end{thm}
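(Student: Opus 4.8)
The plan is to realise \eqref{eq:coupled_sys_intro} as $\dot z = Az$ on $X=X_1\times X_2$, with $A(z_1,z_2)=(L_1z_1,L_2z_2)$ on
\[
D(A)=\{(z_1,z_2)\in D(L_1)\times D(L_2):\ G_1z_1=K_2z_2,\ G_2z_2=-K_1z_1\},
\]
and then to treat dissipativity, generation, and the resolvent bound in turn. For $(z_1,z_2)\in D(A)$, impedance passivity gives $\R\iprod{A(z_1,z_2)}{(z_1,z_2)}_X\le\R\iprod{G_1z_1}{K_1z_1}_U+\R\iprod{G_2z_2}{K_2z_2}_U$, and inserting the coupling relations the right-hand side becomes $\R\iprod{K_2z_2}{K_1z_1}_U-\R\iprod{K_1z_1}{K_2z_2}_U=0$; hence $A$ is dissipative (the coupling is lossless, all dissipation sitting in the two components). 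Since $A$ is densely defined — part of the well-posedness recalled in the introduction, which also supplies surjectivity of $1-A$ — the Lumer--Phillips theorem shows that $A$ generates a contraction semigroup on $X$.

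For the resolvent bound, fix $s\in\RR$ and $F=(f_1,f_2)\in X$ and solve $(is-A)(z_1,z_2)=F$. Because $is\in\rho(A_2)$, the second and fourth equations in \eqref{eq:coupled_sys_intro} determine $z_2$ from $z_1$: with $D_2(is)\colon U\to X_2$ the solution operator of the node $(G_2,L_2,K_2)$ (so $(is-L_2)D_2(is)u=0$, $G_2D_2(is)u=u$), one has $z_2=(is-A_2)\inv f_2-D_2(is)K_1z_1$ and $K_2z_2=K_2(is-A_2)\inv f_2-P_2(is)K_1z_1$. Substituting into the coupling $G_1z_1=K_2z_2$ reduces the task to the single boundary-damped equation $(is-L_1)z_1=f_1$, $(G_1+P_2(is)K_1)z_1=r_2$ with $r_2:=K_2(is-A_2)\inv f_2$. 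I would solve this by comparison with $A_0$: rewriting the boundary condition as $(G_1+K_1)z_1=r_2+(I-P_2(is))K_1z_1$ and using the solution operator $E(is)$ of the node $(G_1+K_1,L_1,K_1)$ (defined since $is\in\rho(A_0)$), one has $z_1=(is-A_0)\inv f_1+E(is)\psi$ with $\psi:=(G_1+K_1)z_1$; applying $K_1$ and writing $\Pi(is):=K_1E(is)$ for the transfer function of that node, $\psi$ must satisfy the equation on $U$
\[
R(s)\psi:=\bigl(I+(P_2(is)-I)\Pi(is)\bigr)\psi=r_2+(I-P_2(is))K_1(is-A_0)\inv f_1.
\]

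The crux is that $R(s)$ is boundedly invertible on $U$ with $\norm{R(s)\inv}\lesssim\eta(s)\inv$. The structural fact driving this is that \eqref{eq:pass_intro} with $j=1$, applied to $x=E(is)u$ (so $L_1x=is\,x$ and $G_1x=u-K_1x$), yields $\R\iprod{\Pi(is)u}{u}_U\ge\norm{\Pi(is)u}_U^2$ for all $u\in U$; in particular $\norm{\Pi(is)}\le1$ and $I-\Pi(is)$ is accretive. Pairing $R(s)\psi=\rho$ with $\Pi(is)\psi$ and taking real parts, the term $\R\iprod{(I-\Pi(is))\psi}{\Pi(is)\psi}\ge0$ is discarded, and the coercivity hypothesis with $u=\Pi(is)\psi$ gives $\eta(s)\norm{\Pi(is)\psi}^2\le\R\iprod{\rho}{\Pi(is)\psi}\le\norm{\rho}\,\norm{\Pi(is)\psi}$, hence $\norm{\Pi(is)\psi}\le\norm{\rho}/\eta(s)$ and then $\norm{\psi}\le\norm{\rho}\bigl(1+(1+\norm{P_2(is)})/\eta(s)\bigr)$. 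This makes $R(s)$ bounded below, hence injective with closed range; surjectivity (equivalently $is\in\rho(A)$) then follows by combining this a priori estimate with the invertibility of the analogous operators for $\R\lambda>0$ — available since $A$, $A_0$ and $A_2$ generate contraction semigroups — and a limiting argument as $\R\lambda\downarrow0$. It remains to assemble the bound: from $\norm{D_2(1+is)}^2\le\norm{P_2(1+is)}$ and the standard resolvent/transfer-function identities, the hypotheses that $\norm{(is-A_2)\inv}$ and $\norm{P_2(1+is)}$ are uniformly bounded yield uniform bounds on $\norm{D_2(is)}$, $\norm{P_2(is)}$ and $\norm{K_2(is-A_2)\inv}$ (so $\norm{r_2}\lesssim\norm{f_2}$), while $\norm{E(is)}\lesssim M_0(s)$ (from $E(is)=(I+(is-A_0)\inv)E(1+is)$ and $\norm{E(1+is)}\le1$) and $\norm{K_1(is-A_0)\inv}$ is controlled by $M_0(s)$ via the dissipation identity for $A_0$. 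Propagating these through the formulas for $\psi$, $z_1$, $z_2$ and $K_1z_1$ — exploiting the positivity relations so that no superfluous power of $M_0(s)$ is lost — gives $\norm{(is-A)\inv}\lesssim M_0(s)/\eta(s)$.

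I expect the main difficulty to lie in the invertibility of $R(s)$ with the sharp factor $\eta(s)\inv$: namely, arranging the pairing so that the positive-realness of the node-$1$ transfer function $\Pi(is)$ and the coercivity of $P_2(is)$ cooperate rather than compete (this is why the computation is done against $\Pi(is)\psi$ and not against $\psi$), and, relatedly, the final bookkeeping that collapses the various auxiliary estimates down to the single power of $M_0(s)$ appearing in the statement. Checking that $(G_1+K_1,L_1,K_1)$ is again a boundary node with $is$ in the resolvent set of its main operator $A_0$ is routine within the boundary node framework.
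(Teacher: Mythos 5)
Your architecture is sound and runs parallel to the paper's: you reduce the resolvent equation to a boundary-value problem for $L_1$ with boundary operator $G_1+P_2(is)K_1$, compare with the damped node $A_0$, and exploit the coercivity of $\R P_2(is)$ through a dissipation identity (your pairing of $R(s)\psi=\rho$ against $\Pi(is)\psi$ is a nice way to package what the paper does by bounding $S_{is}=is-L_1$ below on $\Ker(G_1+P_2(is)K_1)$ via Young's inequality). The dissipativity and generation arguments are fine, and the "output" bound $\|K_1(is-A_0)\inv\|\le M_0(s)^{1/2}$ from the dissipation identity is correct.

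The genuine gap is in the bookkeeping, and it is not cosmetic: your bound $\|E(is)\|\lesssim M_0(s)$, obtained from $E(is)=(I+(is-A_0)\inv)E(1+is)$ and $\|E(1+is)\|\le1$, is too weak to yield the stated estimate. Tracing your own formulas: $\|\rho\|\lesssim\|f_2\|+M_0(s)^{1/2}\|f_1\|$, $\|\psi\|\lesssim\|\rho\|/\eta(s)$, and $z_1=(is-A_0)\inv f_1+E(is)\psi$, so with $\|E(is)\|\lesssim M_0(s)$ you get $\|z_1\|\lesssim M_0(s)\|f_1\|+M_0(s)^{3/2}\|f_1\|/\eta(s)+M_0(s)\|f_2\|/\eta(s)$ — an extra factor $M_0(s)^{1/2}$; routing the estimate through the dissipation identity for $K_1z_1$ and Young's inequality instead gives $\|E(is)\|^2\|f_1\|/\eta(s)\lesssim M_0(s)^2\|f_1\|/\eta(s)$, which is worse still. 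What is actually needed is the \emph{square-root} gain $\|E(is)\|^2\le\|(is-A_0)\inv\|$, i.e.\ $\|E(is)\|\le M_0(s)^{1/2}$; with that, both routes close and give $M_0(s)/\eta(s)$. This is the "input" counterpart of your elementary output estimate $\|K_1(is-A_0)\inv\|^2\le\|(is-A_0)\inv\|$, and unlike the latter it does not follow from the resolvent identity or from impedance passivity of $(G_1,L_1,K_1)$ alone: it uses the strict accretivity of the feedback $G_1\mapsto G_1+K_1$, and the paper proves it (Proposition~\ref{prp:Q}(c)) by passing to the dual system node and applying the output estimate there. This duality step is the one nontrivial ingredient your proposal is missing; the phrase "exploiting the positivity relations so that no superfluous power of $M_0(s)$ is lost" is precisely where it would have to be supplied. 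A secondary, more easily repaired point: your passage from "$R(s)$ bounded below" to "$is\in\rho(A)$" by a limiting argument as $\R\gl\downarrow0$ needs the a priori estimate to hold uniformly on a one-sided complex neighbourhood of $is$ (or, as in the paper, one can invoke the Arendt--Batty density-of-range lemma for dissipative operators at imaginary points); as written it is only asserted.
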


The resolvent estimate for $A$ presented in Theorem~\ref{thm:main_intro} may be combined with  results from the asymptotic theory of operator semigroups to describe the asymptotic behaviour of 
solutions to~\eqref{eq:coupled_sys_intro}. This is illustrated in the following corollary, which is a special case of  Corollary~\ref{cor:decay} below.

\begin{cor}
\label{cor:decay_intro}
Consider the setting of Theorem~\textup{\ref{thm:main_intro}}.
The contraction semigroup $\T$ generated by $A$ has the following properties:
\begin{enumerate}
\item[\textup{(a)}]  The semigroup  $\T$  is strongly stable.
\item[\textup{(b)}] If $ M_0(s)/\eta(s)$ is uniformly bounded with respect to $s\in\RR$, then 
 $\T$ is uniformly exponentially stable. 
\item[\textup{(c)}] If $M_0(s)/\eta(s)\lesssim 1+\abs{s}^\ga$ for some $\ga>0$ and all $s\in \RR$, then 
$$\norm{z_1(t)} + \norm{z_2(t)}=o(t^{-1/\alpha}),\qquad t\to\infty,$$
whenever
$z_1(0)\in D(L_1)$, 
$z_2(0)\in D(L_2)$ and the coupling conditions $G_1z_1(0)=K_2z_2(0)$ and $G_2z_2(0)=-K_1z_1(0)$ are satisfied.
\end{enumerate}
\end{cor}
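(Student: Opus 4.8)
The plan is to deduce all three parts from the resolvent estimate and the structural conclusions of Theorem~\ref{thm:main_intro}, by feeding them into standard results from the asymptotic theory of operator semigroups. Throughout, let $\T$ denote the contraction semigroup generated by $A$ on $X=X_1\times X_2$, so that the solution of~\eqref{eq:coupled_sys_intro} with initial data $(z_1(0),z_2(0))\in X$ is $(z_1(t),z_2(t))=T(t)(z_1(0),z_2(0))$, and recall from the reformulation underlying Theorem~\ref{thm:main_intro} (see Theorem~\ref{thm:res}) that $D(A)$ consists exactly of the pairs $(z_1,z_2)$ with $z_1\in D(L_1)$ and $z_2\in D(L_2)$ satisfying $G_1z_1=K_2z_2$ and $G_2z_2=-K_1z_1$; in particular $\norm{z_1}+\norm{z_2}\lesssim\norm{(z_1,z_2)}_X$ on $X$.

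For part~(a) I would invoke the Arendt--Batty--Lyubich--V\~{u} theorem: the semigroup $\T$ is bounded and, since $i\RR\subseteq\rho(A)$, the boundary spectrum $\sigma(A)\cap i\RR$ is empty, so that theorem applies and $\T$ is strongly stable. For part~(b), the assumption that $M_0(s)/\eta(s)$ is uniformly bounded together with the estimate $\norm{(is-A)\inv}\lesssim M_0(s)/\eta(s)$ gives $\sup_{s\in\RR}\norm{(is-A)\inv}<\infty$, and the Gearhart--Pr\"uss theorem for $C_0$-semigroups on Hilbert spaces then yields uniform exponential stability.

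For part~(c), the resolvent estimate gives $\norm{(is-A)\inv}\lesssim 1+\abs{s}^\ga$, hence $\norm{(is-A)\inv}=O(\abs{s}^\ga)$ as $\abs{s}\to\infty$, so the Borichev--Tomilov theorem provides $\norm{T(t)A\inv}=O(t^{-1/\ga})$ as $t\to\infty$, i.e.\ $\norm{T(t)x}\lesssim t^{-1/\ga}\norm{Ax}$ for $x\in D(A)$. To upgrade this to the asserted little-$o$ rate I would combine it with part~(a): since $0\in\rho(A)$, every $x\in D(A)$ satisfies $T(t)x=T(t/2)A\inv T(t/2)Ax$, so that
\[
\norm{T(t)x}\le\norm{T(t/2)A\inv}\,\norm{T(t/2)Ax}\lesssim t^{-1/\ga}\norm{T(t/2)Ax},
\]
and $\norm{T(t/2)Ax}\to0$ as $t\to\infty$ by the strong stability from part~(a); hence $\norm{T(t)x}=o(t^{-1/\ga})$. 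Applying this with $x=(z_1(0),z_2(0))$---which, by the description of $D(A)$ above, lies in $D(A)$ precisely under the stated hypotheses on the initial data---and using $\norm{z_1(t)}+\norm{z_2(t)}\lesssim\norm{T(t)(z_1(0),z_2(0))}_X$ completes the proof.

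The only point that is not an immediate citation is the passage from $O(t^{-1/\ga})$ to $o(t^{-1/\ga})$ in part~(c), which is exactly why it is convenient to establish strong stability first; the short splitting argument above takes care of it, although one could instead quote a little-$o$ refinement of the Borichev--Tomilov theorem directly. Everything else is a routine application of the Arendt--Batty--Lyubich--V\~{u}, Gearhart--Pr\"uss and Borichev--Tomilov theorems to the conclusions of Theorem~\ref{thm:main_intro}.
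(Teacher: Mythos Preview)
Your proposal is correct and follows essentially the same route as the paper: the paper deduces Corollary~\ref{cor:decay_intro} as a special case of Corollary~\ref{cor:decay}, which in turn is obtained by combining the resolvent conclusions of Theorem~\ref{thm:res} with Theorem~\ref{thm:sg}, whose proof invokes exactly the Arendt--Batty--Lyubich--V\~{u}, Gearhart--Pr\"uss and Borichev--Tomilov theorems you use. The only cosmetic difference is that the paper cites \cite[Thm.~2.4]{BorTom10} directly for the little-$o$ statement in part~(c), whereas you derive it from the $O(t^{-1/\alpha})$ bound via the splitting $T(t)x=T(t/2)A^{-1}T(t/2)Ax$ together with strong stability---an argument you already note is interchangeable with a direct citation.
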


We note that the conditions on the initial data $z_1(0)$ and $z_2(0)$ in part~(c) of Corollary~\ref{cor:decay_intro} are equivalent to $z_1$ and $z_2$ constituting a \emph{classical solution} of the abstract differential equation~\eqref{eq:coupled_sys_intro}, and the result corresponds to  \emph{polynomial stability} of the associated semigroup $\T$.
For many PDE systems of interest the quantity $\norm{z_1(\cdot)}^2+\norm{z_2(\cdot)}^2$ is proportional to the \emph{energy} of the solution of~\eqref{eq:coupled_sys_intro}, defined in a natural way, and thus Corollary~\ref{cor:decay_intro} immediately yields rates of energy decay in coupled PDE systems.
The resolvent bound $M_0$ for $A_0$ in Theorem~\ref{thm:main_intro} can often be taken directly from results found in the literature on, say, wave equations subject to boundary damping~\cite{KomZua90,Mar99}. Alternatively, one can often construct a suitable function $M_0$ by applying abstract methods such as those found in~\cite{AmmJel04,AmmTuc01,BouBou22,ChiPau23,DagZua06book}. 

In the final part of our paper we illustrate the power and versatility of our abstract results by using them to establish polynomial stability of selected PDE models, namely, networks of one-dimensional wave and heat equations and a multidimensional wave equation with an acoustic boundary condition.
 In the latter case, we apply variants of Theorem~\ref{thm:main_intro} and Corollary~\ref{cor:decay_intro} for systems in which the  boundary node $(G_1,L_1,K_1)$ is coupled with an infinite-dimensional linear system.

Well-posedness and stability of boundary-coupled PDE systems have been investigated extensively in the literature and, in particular, many authors have studied polynomial stability of coupled PDEs of mixed type, such as wave-heat or wave-beam systems.
For instance,  rational energy decay has been established by direct PDE methods for
multidimensional wave-heat models arising from fluid-structure interaction models~\cite{AvaLas16, AvaTri13,BatPau19, Duy07,ZhaZua07},
networks of one-dimensional PDE models~\cite{AmmShe18,Aug20a,HanZua16,LiWan24},
and PDE models with dynamic boundary conditions~\cite{AbbNic15b,MerNic17, RivAvi15,Rao97,Weh06}.
Results on  stability of coupled abstract differential equations may be found in~\cite{BenAmm16,Aug20b,BouBou21,DelPau23,FenGuo15,Nic23, Pau19}, and well-posedness of the particular system~\eqref{eq:coupled_sys_intro} has previously been established in~\cite{AalMal13}.
Our main results establish explicit resolvent bounds and decay rates for~\eqref{eq:coupled_sys_intro} with abstract boundary coupling. A particular strength of our results is that they allow us to exploit the relatively simple structure of the two constituent parts of the coupled system~\eqref{eq:coupled_sys_intro}.

Our notation and terminology are standard throughout. We implicitly take all of our  spaces to be complex, and we add subscripts to norms and inner products only in cases where there is risk of ambiguity. Given normed spaces $X$ and $Y$, we write $\B(X,Y)$ for the space of bounded linear operators from $X$ to $Y$, and we write $\B(X)$ for $\B(X,X)$. We denote the domain of a linear operator $L$  by $D(L)$, and we implicitly endow domains of linear operators with the graph norm. We denote the range and kernel of a linear operator $L$ by $\Ran L$ and $\Ker L$, respectively, and we write $\sigma(L)$ and $\rho(L)$ for the spectrum and the resolvent set of $L$. A linear operator $L\colon D(L)\subseteq X\to Y$ will be said to be \emph{bounded below} if there exists  a constant $c>0$ such that $\|Lx\|_Y\ge c\|x\|_X$ for all $x\in D(L)$. Furthermore, given a bounded linear operator $T\in\B(X)$ on a Hilbert space $X$, we denote by $\R T$ the self-adjoint operator $\frac12(T+T^*)$, and given two self-adjoint bounded linear operators $S,T\in\B(X)$ on a Hilbert space $X$ we write $T\ge S$ to mean that $\langle Tx-Sx,x\rangle\ge0$ for all $x\in X$. If $p$ and $q$ are two real-valued quantities we write $p\lesssim q$ to express that $p\le Cq$ for some constant $C>0$ which is independent of all parameters that are free to vary in a given situation. We shall also make use of standard `big-O' and `little-o' notation.

\section{Preliminary results on boundary nodes}\label{sec:prelim}

Let  $X$ and $U$ be Hilbert spaces, and let $L\colon D(L)\subseteq X\to X$, $G, K\colon D(L)\subseteq X\to U$ be linear operators. The triple $(G,L,K)$
is said to be an \emph{(internally well-posed) boundary node} (on the Hilbert spaces $(U,X,U)$) if 
\begin{itemize}
\item[\textup{(i)}]  $G,K\in\B(D(L),U)$;
\item[\textup{(ii)}] the restriction $A\colon\Ker G\subseteq X\to X $ of $L$ to $\Ker G$ generates a $C_0$-semigroup on $X$;
\item[\textup{(iii)}]  the operator $G$ has a right-inverse, which is to say there exists $G^r\in\B(U,D(L))$ such that $GG^r=I$.
\end{itemize}
The boundary node is said to be \emph{impedance passive} if
\begin{equation}\label{eq:pass}
\R\<Lx,x\>_X\le\R\<Gx,Kx\>_U,\qquad x\in D(L),
\end{equation}
and in this case it follows from the Lumer--Phillips theorem \cite[Thm.~II.3.15]{EngNag00book} that the $C_0$-semigroup generated by $A$ is contractive.
Note further that for any boundary control system the domain $D(L)$ of $L$ is densely (and continuously) embedded in $X$. Furthermore, the existence of a right-inverse for $G$ is guaranteed if, for instance, $U$ is finite-dimensional and $G$ is surjective. A boundary node may be associated with the \emph{abstract boundary control system}~\cite{CheMor03, MalSta06, Sal87a} given by
\begin{equation}\label{eq:BCSgen}
	\left\{\begin{aligned}
	  \dot{z}(t) &= L z(t),  \qquad&t\ge0,\\
	  G z(t) &= u(t), &t\ge0,\\
	  y(t) &= K z(t), &t\ge0,\\
	  z(0)&=z_0\in X.
	\end{aligned}\right.
\end{equation}

\begin{rem}\label{rem:Riesz}
As noted for instance in~\cite{MalSta07}, it is possible to consider boundary nodes also in the ostensibly more general setting where $K$ maps not into the codomain $U$ of $G$ but instead into another Hilbert space $Y$. In this case it is customary to assume either that $U$ is the space of bounded conjugate-linear functionals on $Y$ or, vice versa, that $Y$ is the space of bounded conjugate-linear functionals on $U$, so that the inner product of $U$ in~\eqref{eq:pass} can be replaced by the dual pairing between $U$ and $Y$. The Riesz--Fréchet theorem ensures the existence of a unitary map $\Phi\colon U\to Y$, so we may return to the setting considered initially by replacing either  $K$ by $\Phi^* K$ or $G$ by $\Phi G$, resulting in a boundary node on the spaces $(Y,X,Y)$. 
\end{rem}

\begin{rem}\label{rem:equiv}
Our definition of an (internally well-posed) boundary node is ostensibly different from that given in  \cite[Def.~2.2]{AalMal13} and \cite[Def.~2.2]{MalSta07}, but in fact the two concepts coincide. More precisely, $(G,L,K)$ is an internally well-posed boundary node on $(U, X, U )$ if and only if $(G, L, K)$ is an internally well-posed boundary node in the sense of  \cite[Def.~2.2]{AalMal13} and \cite[Def.~2.2]{MalSta07} and a \emph{strong colligation} in the sense of \cite[Def.~4.4]{MalSta07}. Moreover, our definition of impedance passivity of the boundary node $(G,L,K)$ is equivalent to the definition in  \cite[Def.~2.3]{AalMal13} and \cite[Def.~3.2]{MalSta07}.
\end{rem}

We omit the proofs of the claims made in Remark~\ref{rem:equiv}, except to point out that whereas closedness of the operator $L$, or equivalently completeness of $D(L)$, is \emph{assumed} in \cite{AalMal13, MalSta07} for us it is a consequence of the definition of a boundary node, as the following simple result shows.

\begin{lem}\label{lem:closed}
If $(G,L,K)$ is a boundary node then $L$ is a closed operator.
\end{lem}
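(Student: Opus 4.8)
The plan is to show that the graph of $L$ is closed by exploiting the three defining properties of a boundary node, and in particular the fact that $A$, the restriction of $L$ to $\Ker G$, is a semigroup generator and hence closed. Suppose $(x_n)\subseteq D(L)$ with $x_n\to x$ in $X$ and $Lx_n\to y$ in $X$; we must produce $x\in D(L)$ with $Lx=y$. The idea is to split each $x_n$ into a part lying in $\Ker G$ and a part controlled by the right-inverse $G^r$.

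More precisely, I would set $w_n = x_n - G^r G x_n \in D(L)$, and note that $G w_n = G x_n - G G^r G x_n = G x_n - G x_n = 0$, so $w_n\in\Ker G = D(A)$ and $A w_n = L w_n = L x_n - L G^r G x_n$. The crux is to establish that the sequence $(G x_n)$ converges in $U$: since $G\in\B(D(L),U)$ (property (i)), $G$ is bounded with respect to the graph norm $\|\cdot\|_{D(L)}$, so from mere convergence of $(x_n)$ and $(Lx_n)$ in $X$ we get that $(x_n)$ is Cauchy in the graph norm, hence $(G x_n)$ is Cauchy in $U$, say $G x_n\to u$. Then $G^r G x_n\to G^r u$ in $D(L)$ (with the graph norm), so in particular $G^r G x_n\to G^r u$ in $X$ and $L G^r G x_n\to L G^r u$ in $X$. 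Consequently $w_n\to x - G^r u$ in $X$ and $A w_n = L w_n\to y - L G^r u$ in $X$. Since $A$ is closed, $x - G^r u\in D(A)\subseteq D(L)$ and $A(x - G^r u) = y - L G^r u$, whence $x = (x - G^r u) + G^r u\in D(L)$ and $L x = L(x - G^r u) + L G^r u = (y - L G^r u) + L G^r u = y$, as required.

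The main obstacle — or really the one genuinely substantive point — is the observation that convergence of $x_n$ and $Lx_n$ in $X$ forces Cauchyness in the graph norm, which in turn (via boundedness of $G$ on $D(L)$) gives convergence of $(G x_n)$; everything else is bookkeeping with the right-inverse. Once $(G x_n)$ is known to converge, closedness of $A$ does all the work, and the only care needed is to check that $G^r$, being bounded into $D(L)$, transports graph-norm convergence correctly so that both $G^r G x_n$ and $L G^r G x_n$ converge in $X$. I would also remark that this argument shows more: $D(L)$ equipped with the graph norm is complete, i.e.\ $L$ is closed is equivalent to completeness of $D(L)$, which is precisely the point alluded to before the statement.
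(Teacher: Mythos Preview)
Your proof is correct and follows essentially the same approach as the paper: decompose $x_n$ as $(x_n - G^r G x_n) + G^r G x_n$, use boundedness of $G$ on $D(L)$ to show $(Gx_n)$ converges in $U$, then invoke closedness of $A$ on the $\Ker G$ part. The only cosmetic difference is that the paper phrases the convergence of $LG^rGx_n$ via $LG^r\in\B(U,X)$, whereas you phrase it via $G^r\in\B(U,D(L))$, which amounts to the same thing.
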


\begin{proof}
Let $(x_n)_{n\ge1}$ be a sequence in $D(L)$ and suppose that $x,y\in X$ are such that  $x_n\to x$  and $Lx_n\to y$ as $n\to\infty$.  Since $G\in\B(D(L),U)$, the sequence $(Gx_n)_{n\ge1}$ is a Cauchy sequence in $U$, so by completeness there exists $u\in U$ such that $Gx_n\to u$ as $n\to\infty$. Let $y_n=x_n-G^rGx_n\in\Ker G$ for $n\ge1$, where $G^r\in\B(U,D(L))$ is a right-inverse of $G$. Then $y_n\to x-G^ru$ as $n\to\infty$.  Since $LG^r\in\B(U,X)$, the restriction $A$ of $L$ to $\Ker G$ satisfies
$$Ay_n=Lx_n-LG^rGx_n\to y-LG^ru,\qquad n\to\infty.$$
 The operator $A$ is  by assumption the generator of a $C_0$-semigroup on $X$, and in particular closed, so $x-G^ru\in D(A)=\Ker G\subseteq D(L)$ and $A(x-G^ru)=y-LG^ru$. Hence $x\in D(L)$ and 
$Lx=A(x-G^ru)+LG^ru=y,$
and it follows that $L$ is closed.
\end{proof}

Let $(G,L,K)$ be a boundary node on $(U,X,U)$, and let $A$ be the restriction of $L$ to $\Ker G$. We define the \emph{transfer functions} $H\colon \rho(A)\to\B(U,X)$ and $P\colon\rho(A)\to\B(U)$ by $H(\lambda)u=x$ and $P(\lambda)u=Kx$ for all $\lambda\in\rho(A)$ and $u\in U$, where $x\in D(L)$ is the solution of the abstract elliptic problem
$$(\lambda-L)x=0,\qquad Gx=u.$$
The existence of $H$ and $P$ follow from~\cite[Rem.~10.1.5 \& Prop.~10.1.2]{TucWei09book}.
In defining the transfer functions of a boundary node, we do not distinguish notationally between $H$ and $P$ and their analytic extensions to larger domains. Our first preparatory result establishes various identities and estimates, primarily for the transfer functions $H$ and $P$. Parts of the proof rely on the theory of \emph{system nodes}~\cite[Sect~4.7]{Sta05book}.

\begin{prp}
\label{prp:HPproperties}
Assume that $(G,L,K)$ is a boundary node on $(U,X,U)$ and let $A$ be the restriction of $L$ to $\Ker G$. 
\begin{itemize}
\item[\textup{(a)}] 
The functions $H$ and $P$ are analytic,
and for $\gl\in\rho(A)$ the following hold:
\begin{itemize}
\item[\textup{(i)}]
$L H(\lambda)=\lambda H(\lambda)$, 
  $G H(\gl)=I$ and $K H(\gl)=P(\gl)$;
\item[\textup{(ii)}]  $H(\gl)\in \B(U,D(L))$;
\item[\textup{(iii)}] $(\gl -A)\inv (\gl-L)x=x-H(\gl)G x$ for all  $x\in D(L)$, and in particular $H(\gl)G x=x$ for all $x\in\Ker (\gl-L)$.
\end{itemize}
\item[\textup{(b)}]
Assume in addition that the boundary node $(G,L,K)$ is impedance passive. Then $\R P(\gl)\geq 0$ for all $\gl\in  \rho(A)\cap\overline{\C_+}$, and 
\begin{equation}\label{eq:square_est}
\|{H(\gl)}\|^2 \leq \frac{1}{\R\gl}\|{P(\gl)}\|, \qquad \|{K(\gl-A)\inv}\|^2 \leq \frac{1}{\R\gl}\|{P(\gl)}\|
\end{equation}
for all $\gl\in\C_+$. Moreover, if $\gl\in\rho(A)\cap\overline{\C_+}$ then
$$
\begin{aligned}
\|{K(\gl-A)\inv}\|& \leq \|K(1+\gl-A)\inv\|(1+\|(\gl-A)\inv\|),\\
\|{H(\gl)} \|&\leq \|{H(1+\gl)}\|(1+\|{(\gl-A)\inv}\|),\\
\|{P(\gl)} \|&\leq \|{P(1+\gl)}\|(2+\|{(\gl-A)\inv}\|).
\end{aligned}
$$
\end{itemize}
\end{prp}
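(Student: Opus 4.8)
The plan is to deduce part~(a) from an explicit formula for $H$, and part~(b) from impedance passivity together with a duality argument and a few elementary resolvent identities. For part~(a), fix a bounded right-inverse $G^r\in\B(U,D(L))$ of $G$ and set, for $\gl\in\rho(A)$, $\widetilde H(\gl):=G^r-(\gl-A)\inv(\gl-L)G^r$. Since $L$ is bounded from $D(L)$, endowed with the graph norm, into $X$, we have $(\gl-L)G^r\in\B(U,X)$, so that $\widetilde H(\gl)\in\B(U,D(L))$ and $\widetilde H$ depends analytically on $\gl$. A direct computation then shows $G\widetilde H(\gl)=I$, because $\Ran (\gl-A)\inv\subseteq\Ker G$, and $(\gl-L)\widetilde H(\gl)=0$, using $(\gl-L)(\gl-A)\inv=I$ on $X$; hence $\widetilde H(\gl)u$ is the unique solution of the elliptic problem defining $H(\gl)u$, so $H=\widetilde H$, which yields~(i), (ii) and the analyticity of $H$, and then of $P=KH$. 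For~(iii) one checks that, for $x\in D(L)$, the element $w:=x-H(\gl)Gx$ lies in $\Ker G=D(A)$ and satisfies $(\gl-A)w=(\gl-L)x$, so that $w=(\gl-A)\inv(\gl-L)x$; the special case $x\in\Ker(\gl-L)$ follows at once.

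For part~(b), testing~\eqref{eq:pass} on $x:=H(\gl)u$ with $u\in U$, so that $Lx=\gl x$, $Gx=u$ and $Kx=P(\gl)u$, gives
\[
(\R\gl)\,\norm{x}^2=\R\<Lx,x\>\le\R\<u,P(\gl)u\>=\<\R P(\gl)u,u\> .
\]
For $\gl\in\rho(A)\cap\overline{\C_+}$ the left-hand side is non-negative, whence $\R P(\gl)\ge0$; for $\gl\in\C_+$, dividing by $\R\gl$ and estimating the right-hand side by $\norm{P(\gl)}\,\norm{u}^2$ yields the first inequality in~\eqref{eq:square_est}. The second inequality in~\eqref{eq:square_est} is the delicate point, and here we use the theory of system nodes~\cite[Sect.~4.7]{Sta05book} to pass to the dual node: it is again impedance passive, and the analogues of $P$ and $H$ for it are $\gl\mapsto P(\bar\gl)^*$ and $\gl\mapsto\bigl(K(\bar\gl-A)\inv\bigr)^*$, respectively. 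Applying the first inequality to this dual node (the argument above applies verbatim at the level of system nodes) and then substituting $\bar\gl$ for $\gl$ gives $\norm{K(\gl-A)\inv}^2\le\norm{P(\bar\gl)^*}/\R\gl=\norm{P(\gl)}/\R\gl$. (If one is content with an absolute constant in place of $1$, which suffices for all later applications, duality can be avoided altogether: testing~\eqref{eq:pass} on $x+\theta H(\gl)v$ with $x=(\gl-A)\inv f$, $\theta\in\RR$ and $v\in U$ yields a quadratic inequality in $\theta$ that must hold for all $\theta$, and taking $v$ proportional to $Kx$ and separating the cases where $\norm{Kx}$ is large or small compared with $\norm{f}\,\norm{P(\gl)}^{1/2}(\R\gl)^{-1/2}$ then gives the bound.)

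The last three inequalities in part~(b) follow from~\eqref{eq:square_est} evaluated at $1+\gl$ by resolvent manipulations. For $\gl\in\rho(A)\cap\overline{\C_+}$ one has $1+\gl\in\C_+\subseteq\rho(A)$, the resolvent identity gives $(\gl-A)\inv=(1+\gl-A)\inv+(1+\gl-A)\inv(\gl-A)\inv$, and an elliptic-uniqueness argument exactly as in part~(a) gives $H(\gl)=H(1+\gl)+(\gl-A)\inv H(1+\gl)$, and hence $P(\gl)=P(1+\gl)+K(\gl-A)\inv H(1+\gl)$. Applying $K$ to the first identity and taking norms in all three gives the first two of the stated bounds at once; combining them with $\norm{H(1+\gl)}\le\norm{P(1+\gl)}^{1/2}$ and $\norm{K(1+\gl-A)\inv}\le\norm{P(1+\gl)}^{1/2}$, which are~\eqref{eq:square_est} at $1+\gl$ together with $\R(1+\gl)\ge1$, gives the third. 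I expect the genuine obstacle to be the sharp second inequality in~\eqref{eq:square_est}: it relies on the node/adjoint-node duality, namely the identification of the dual transfer function with $P(\bar\cdot)^*$, of the dual control-to-state map with $\bigl(K(\bar\cdot-A)\inv\bigr)^*$, and the fact that impedance passivity is inherited by the dual. The remainder is bookkeeping with the explicit formula for $H$ and standard resolvent identities.
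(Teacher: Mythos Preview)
Your proposal is correct and follows essentially the same route as the paper: both derive part~(a) from the defining elliptic problem together with the identity $H(\gl)=(I+(\gl-A)^{-1})H(1+\gl)$, and both obtain the second inequality in~\eqref{eq:square_est} by passing to the dual (impedance passive) system node and applying the first inequality there, after which the last three estimates follow from the resolvent identity exactly as you indicate. The only stylistic difference is that you write down the explicit formula $H(\gl)=G^r-(\gl-A)^{-1}(\gl-L)G^r$ at the outset, whereas the paper works directly from the definitions and cites~\cite{TucWei09book} for the existence of $H$ and $P$.
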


\begin{proof}
Part (a)(i) follows immediately from the definitions of $H$ and $P$, and part~(a)(ii) follows from the first part of~(a)(i). In order to prove part~(a)(iii) let $\lambda\in\rho(A)$, $x\in D(L)$ and let $y=(\gl -A)\inv (\gl-L)x$. Then $y\in D(A)=\Ker G$ and $(\gl-A)y=(\gl-L)x$. Hence $y-x\in\Ker(\gl-L)$ and $G(y-x)=-Gx$, so by definition of $H$ we have $y-x=-H(\gl)Gx$, as required. 
Since $G$ is surjective by assumption, 
 part (iii) and the resolvent identity can be used to show that $H(\gl)-H(\mu) = (\mu-\gl)(\gl-A)\inv H(\mu)$ for all $\gl,\mu\in\rho(A)$.
Analyticity of $H$ and $P$ follows easily.

Now suppose that the boundary node $(G,L,K)$ is impedance passive and let $\gl\in  \rho(A)\cap\overline{\C_+}$, $u\in U$ and  $x=H(\lambda)u$. Then $x\in \Ker(\gl-L)$, $Gx=u$ and $Kx=P(\gl)u$, so
$$\R \<P(\gl)u,u\>=\R \<Kx,Gx\>\ge\R \<Lx,x\>=\R\lambda\,\|x\|^2.$$ 
Thus $\R P(\gl)\ge0$ and $ \R\lambda\,\norm{H(\gl)u}^2\leq  \norm{P(\gl)}\norm{u}^2$, giving both the first claim in (b) and the first inequality in~\eqref{eq:square_est}.
By~\cite[Thm.~2.3]{MalSta06} the boundary node $(G,L,K)$ defines a system node 
\begin{equation}\label{eq:S_node}
S=\begin{pmatrix}
A\&B\\ C\&D
\end{pmatrix}
=\begin{pmatrix}
L\\ K
\end{pmatrix}
\begin{pmatrix}
I\\G
\end{pmatrix}\inv,
\qquad D(S)=\Ran \begin{pmatrix}
I\\G
\end{pmatrix},
\end{equation}
and the transfer function of the system node $S$ coincides on $\rho(A)\cap\overline{\C_+}$ with $P$. Here the semigroup generator associated with  $S$ is precisely $A$, the output operator  $C$ of $S$ is the restriction of $K$ to $D(A)=\Ker G$ and the input operator $B$ satisfies $H(\gl)=(\gl-A)\inv B$ for all $\gl\in\rho(A)\cap\overline{\C_+}$, by~\cite[Prop.~10.1.2 \& Rem.~10.1.5]{TucWei09book}. 
 Here and elsewhere we do not distinguish notationally between $A$ and its extension to an element of $\B(X,X_{-1})$, where $X_{-1}$ is the completion of $X$ with respect to the norm defined by $\|x\|_{-1}=\|(\gl_0-A)\inv x\|$ for some choice of $\gl_0\in\rho(A)$.
Since $(G,L,K)$ is impedance passive, it follows from~\cite[Thm.~2.3]{MalSta06} and~\cite[Thm.~4.2]{Sta02}
that the system node $S$ is impedance passive  in the sense of~\cite[Def.~4.1]{Sta02}, which is to say that
$$\R\<{Ax+Bu,x}\>\leq \R \left\<{C\&D\begin{pmatrix}
x\\u
\end{pmatrix}
,u}\right\>,\qquad
\begin{pmatrix}
x\\u
\end{pmatrix}
\in D(S).$$
Let $\gl\in\C_+$ and 
$x=(\gl-A)\inv Bu$. Then $Ax+Bu = \gl x\in X$ and $(C\&D)(x,u)^\top = P(\gl)u$, and
impedance passivity of $S$ gives
$$
\begin{aligned}
\R \gl \,\|{(\gl-A)\inv Bu}\|^2
 = \R \<{\gl x,x}\>
 = \R \<{Ax+Bu,x}\>
\le\R \<{P(\gl)u,u}\>.
\end{aligned}
$$
Thus
 $\|{(\gl-A)\inv B}\|^2\leq (\R \gl)\inv\|{P(\gl)}\|$ for $\gl\in \C_+$,
and in fact the same estimate holds for impedance passive system nodes which do not necessarily arise from boundary control systems.
 By~\cite[Thm.~6.2.14]{Sta05book} the dual system node has  domain 
$$D(S^\ast) =\left \{\begin{pmatrix}
x\\u
\end{pmatrix}\in X\times U:A^\ast x+C^\ast u\in X\right\},$$ 
and its generating operators are $A^\ast$, $C^\ast$, and $B^\ast$, and its transfer function on $\C_+$ is given by $\gl\mapsto P(\overline{\gl})^\ast$. Here $C^*$ is the adjoint of $C\in\B(D(A),U)$. Furthermore, $S$ is impedance passive if and only if $S^\ast$ is impedance passive~\cite[Cor.~4.5]{Sta02}. 
Hence  $\|{(\overline{\gl}-A^\ast)\inv C^\ast}\|^2\leq (\R \gl)\inv\|{P(\gl)^\ast}\|$ for all $\gl\in\C_+$ 
by the above argument.
Recalling that  $C$ coincides with the restriction of $K$ to ${D(A)}$, we deduce that if $\gl\in\C_+$ then $$(K(\gl-A)\inv)^\ast = (C(\gl-A)\inv)^\ast = (\overline{\gl}-A^\ast)\inv C^\ast,$$ 
where $(\overline{\gl}-A^\ast)\inv $ is the adjoint of $(\gl-A)\inv\in\B(X,D(A))$.
 It follows that $\|{K(\gl-A)\inv}\|^2\le  (\R\gl)\inv\|{P(\gl)}\|$. 

Finally, let $\gl\in\rho(A)\cap\overline{\C_+}$. Then $(\gl-A)\inv=(1+\gl-A)\inv(I+(\gl-A)\inv)$
by the resolvent identity, and the first of the final three estimates follows at once.
If $u\in U$ and $x = H(1+\gl)u\in D(L)$, then
$(1+\gl-L)x=0$ and $Gx=u$.
 By part~(a)(iii) we have $x=-(\gl-A)\inv x+H(\gl)u.$ Hence $H(\gl) = (I+(\gl-A)\inv)H(1+\gl)$. This identity immediately gives the second estimate, and also implies the final estimate  using part~(a)(i),  \eqref{eq:square_est} and the previous estimate for $\|K(\gl-A)\inv\|$.
\end{proof}

\begin{rem}
If we define $L\colon D(L)\subseteq X\to X$ by $L=\diag(L_1,L_2)$ with domain $D(L)=D(L_1)\times D(L_2)$ and $G,K\colon D(L)\subseteq X\to U=U_1\times U_2$ by  $G=\diag(G_1,G_2)$ and $K=\diag(K_1,K_2)$, then it is easy to see that $(G,L,K)$ is an impedance passive boundary node on $(U,X,U)$, and the associated transfer function $P$ satisfies $P(\gl)=\diag(P_1(\gl),P_2(\gl))$ for all $\gl\in\rho(A_1)\cap\rho(A_2)\cap\overline{\C_+}$.
\end{rem}

In what follows we shall  consider the restriction of $L$ to $\Ker(G+J^*QJK)$, where $(G,L,K)$ is an impedance passive boundary node on $(U,X,U)$,   $Q$ is a bounded linear operator on a Hilbert space $V$ and $J\in\B(U,V)$. We denote this operator by $A_Q$, noting that  $A_0$ coincides with the operator $A$. The operator $A_Q$ is associated with the system 
\begin{equation}\label{eq:}
	\left\{\begin{aligned}
	  \dot{z}(t) &= L z(t),  \qquad&t\ge0,\\
	  G z(t) &= -J^*QJKz(t), &t\ge0,\\
	  z(0)&=z_0\in X.
	\end{aligned}\right.
\end{equation}
Here the second line represents \emph{boundary damping} when $\R Q\ge0$.

\begin{prp}
\label{prp:Q}
Let $(G,L,K)$ be an impedance passive boundary node on $(U,X,U)$, $J\in\B(U,V)$ and suppose that $Q\in\B(V)$ satisfies $\R Q\ge cI$ for some $c>0$. 
\begin{itemize}
\item[\textup{(a)}]
 The triple $(G+J^*QJK,L,K)$ is an impedance passive boundary node on $(U,X,U)$ and $\rho(A)\cap \overline{\C_+}\subseteq \rho(A_Q)\cap \overline{\C_+}$. Moreover, 
  \begin{equation}\label{eq:diss_est}
  \R\< A_Qx,x\>\le -c\|JKx\|^2,\qquad x\in D(A_Q).
  \end{equation} 
 \item[\textup{(b)}] Let $H_Q\colon\rho(A_Q)\to\B(U,X)$ and  $P_Q\colon\rho(A_Q)\to\B(U)$ denote the transfer functions associated with  $(G+J^*QJK,L,K)$.  If $\gl\in\rho(A)\cap \overline{\C_+}$ then $I+J^*QJP(\gl)$ is invertible and 
$$
\begin{aligned}
H_Q(\gl) &= H(\gl) (I+J^*QJP(\gl))\inv,\\ 
P_Q(\gl) &= P(\gl)(I+J^*QJP(\gl))\inv .
\end{aligned}
$$
 
\item[\textup{(c)}] If $\lambda\in\rho(A_Q)\cap\overline{\C_+}$ then 
$$
\begin{aligned}
\|JK(\gl-A_Q)\inv\|^2 &\le c\inv \|(\gl-A_Q)\inv\|,\\
\|H_Q(\gl)J^*\|^2 &\le c\inv \|(\gl-A_Q)\inv\|,\\
\|JP_Q(\gl)J^*\|&\le c\inv.
\end{aligned}
$$ 

\end{itemize}
\end{prp}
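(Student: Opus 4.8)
The plan is to prove the three parts in turn, the linchpin for parts~(a) and~(b) being the following claim, which I would isolate first: \emph{for every $\gl\in\rho(A)\cap\overline{\C_+}$ the operator $I+J^*QJP(\gl)\in\B(U)$ is invertible.} To prove it I would first observe that $\R Q\ge cI$ forces $Q$ to be boundedly invertible (both $Q$ and $Q^*$ are then bounded below, so $Q$ has closed and dense range, and $\norm{Q\inv}\le c\inv$) and that $\R Q\inv\ge c'I$ with $c'=c\norm{Q}^{-2}>0$, since $\R\iprod{Q\inv v}{v}=\R\iprod{Qw}{w}\ge c\norm{w}^2\ge c\norm{Q}^{-2}\norm{v}^2$ for $w=Q\inv v$. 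By Proposition~\ref{prp:HPproperties}(b) we have $\R P(\gl)\ge0$ on $\rho(A)\cap\overline{\C_+}$, hence $\R\bigl(JP(\gl)J^*\bigr)=J(\R P(\gl))J^*\ge0$ and therefore $\R\bigl(Q\inv+JP(\gl)J^*\bigr)\ge c'I$; since any bounded operator whose real part dominates a positive multiple of the identity is invertible, $Q\inv+JP(\gl)J^*$ is invertible, and consequently so is $Q\bigl(Q\inv+JP(\gl)J^*\bigr)=I+QJP(\gl)J^*\in\B(V)$. Finally, by the standard fact that $I+XY$ is invertible if and only if $I+YX$ is (here with $X=QJP(\gl)\in\B(U,V)$ and $Y=J^*\in\B(V,U)$), the operator $I+J^*QJP(\gl)=I+YX$ is invertible.

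Granting this, part~(a) is routine. Property~(i) of a boundary node is immediate, and impedance passivity of $(G+J^*QJK,L,K)$ follows from $\R\iprod{(G+J^*QJK)x}{Kx}=\R\iprod{Gx}{Kx}+\R\iprod{QJKx}{JKx}\ge\R\iprod{Lx}{x}+c\norm{JKx}^2$ for $x\in D(L)$, using impedance passivity of $(G,L,K)$ and $\R Q\ge cI$; restricting to $x\in D(A_Q)=\Ker(G+J^*QJK)$, where $Gx=-J^*QJKx$, this also shows that $A_Q$ is dissipative and gives~\eqref{eq:diss_est}. Property~(iii) holds because $(G+J^*QJK)H(\gl)=I+J^*QJP(\gl)$ is invertible for any $\gl\in\C_+\subseteq\rho(A)$, so $H(\gl)(I+J^*QJP(\gl))\inv\in\B(U,D(L))$ is a right inverse. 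For property~(ii) and the spectral inclusion I would solve, for $\gl\in\rho(A)\cap\overline{\C_+}$ and $f\in X$, the equation $(\gl-L)x=f$, $(G+J^*QJK)x=0$: writing the general solution in $D(L)$ of $(\gl-L)x=f$ as $x=(\gl-A)\inv f+H(\gl)u$ (cf.\ Proposition~\ref{prp:HPproperties}(a)), the boundary condition becomes $(I+J^*QJP(\gl))u=-J^*QJK(\gl-A)\inv f$, which by the claim has a unique solution $u$ depending boundedly on $f$. Hence $\rho(A)\cap\overline{\C_+}\subseteq\rho(A_Q)$; in particular $(\gl_0-A_Q)\inv\in\B(X)$ for any $\gl_0>0$, so $D(A_Q)=\Ran(\gl_0-A_Q)\inv$ is dense (its adjoint being injective), and the Lumer--Phillips theorem, together with dissipativity, shows that $A_Q$ generates a contraction semigroup. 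For part~(b), given $\gl\in\rho(A)\cap\overline{\C_+}$ and $u\in U$, Proposition~\ref{prp:HPproperties}(a)(iii) gives $x=H(\gl)Gx$ for the solution $x$ of $(\gl-L)x=0$, $(G+J^*QJK)x=u$, whence $u=(I+J^*QJP(\gl))Gx$ and the stated formulas $H_Q(\gl)=H(\gl)(I+J^*QJP(\gl))\inv$, $P_Q(\gl)=P(\gl)(I+J^*QJP(\gl))\inv$ follow at once.

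For part~(c) I would argue from~\eqref{eq:diss_est}. Given $\gl\in\rho(A_Q)\cap\overline{\C_+}$ and $f\in X$, the vector $x=(\gl-A_Q)\inv f$ satisfies $A_Qx=\gl x-f$, so $c\norm{JKx}^2\le-\R\iprod{A_Qx}{x}=\R\iprod{f}{x}-\R\gl\norm{x}^2\le\norm{f}\norm{x}\le\norm{(\gl-A_Q)\inv}\norm{f}^2$, which is the first estimate. For the third, applying the quantitative impedance inequality from the previous paragraph with $y=H_Q(\gl)J^*v$ (so $Ly=\gl y$, $(G+J^*QJK)y=J^*v$ and $Ky=P_Q(\gl)J^*v$) gives $c\norm{JP_Q(\gl)J^*v}^2\le\R\iprod{v}{JP_Q(\gl)J^*v}\le\norm{v}\norm{JP_Q(\gl)J^*v}$, using $\R\gl\ge0$. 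The second estimate is the subtle one, as it concerns the control rather than the observation operator and so is not accessible by the same direct computation. Here I would pass to the dual system node $S_Q^*$ of the (impedance passive) system node $S_Q$ associated with $(G+J^*QJK,L,K)$; by~\cite{Sta02} this is again impedance passive and is obtained from the dual $S^*$ of $S$ by output feedback with the operator $J^*Q^*J$, so, since $\R Q^*=\R Q\ge cI$, the argument leading to~\eqref{eq:diss_est} applied to $S_Q^*$ yields $\R\iprod{A_Q^*w}{w}\le-c\norm{JB_Q^*w}^2$ for $w\in D(A_Q^*)$, where $B_Q^*$ is the observation operator of $S_Q^*$. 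Repeating the proof of the first estimate with $A_Q^*$ in place of $A_Q$ then gives $\norm{JB_Q^*(\overline{\gl}-A_Q^*)\inv}^2\le c\inv\norm{(\gl-A_Q)\inv}$, and the second estimate follows by taking adjoints, since $H_Q(\gl)J^*=(\gl-A_Q)\inv B_QJ^*$ has adjoint $JB_Q^*(\overline{\gl}-A_Q^*)\inv$.

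The principal obstacle is the invertibility claim. Trying to show $I+J^*QJP(\gl)$ is bounded below directly works on $\C_+$ but breaks on $i\RR$; what makes the argument work uniformly on $\overline{\C_+}$ is the rearrangement into $Q\inv+JP(\gl)J^*$, which moves $Q$ into a position where its strict accretivity can be used at once. A secondary difficulty is the duality step for the second estimate in~(c), which relies on the compatibility of feedback interconnection with duality for system nodes.
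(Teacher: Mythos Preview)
Your proposal is correct and follows essentially the same route as the paper: the same accretivity trick $Q^{-1}+JP(\lambda)J^*$ for invertibility, the same resolvent construction for $A_Q$, the same derivation of $H_Q$ and $P_Q$, and the same duality argument for the second estimate in~(c). The one place where the paper is more careful is that it re-proves the first estimate of~(c) at the \emph{system node} level (using impedance passivity of $S$ and the feedback structure) rather than invoking~\eqref{eq:diss_est} directly, precisely so that the argument transfers verbatim to the dual system node $S_Q^*$; your phrase ``the argument leading to~\eqref{eq:diss_est} applied to $S_Q^*$'' is slightly loose since that argument used the boundary-node inequality $\R\langle Lx,x\rangle\le\R\langle Gx,Kx\rangle$ and $S_Q^*$ is not a boundary node, but the system-node version you clearly have in mind (and flag in your final paragraph) does the job.
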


\begin{proof}
We begin by proving the inclusion $\rho(A)\cap \overline{\C_+}\subseteq \rho(A_Q)\cap \overline{\C_+}$ in~(a).
To this end, let $\gl\in\rho(A)\cap \overline{\C_+}$. 
Then $\R P(\gl)\geq 0$ by  Proposition~\ref{prp:HPproperties}(b), and thus also $\R JP(\gl)J^*\geq 0$.
Since $\R Q\geq cI$,  $Q$ is invertible and $\R Q\inv \geq c \|{Q}\|^{-2} I$ by~\cite[Lem.~A.1]{Pau19}.
Thus $I+QJP(\gl)J^*=Q(Q\inv + JP(\gl)J^*)$. But 
$$\R (Q\inv + JP(\gl)J^*) \geq \R Q\inv \geq c \|{Q}\|^{-2} I,$$ 
and hence $I+ QJP(\gl)J^*$ is  invertible, which finally implies that $I+ J^*QJP(\gl)$, too, is invertible. 
Let  
\begin{equation}\label{eq:AQ_inv}
R_\gl=(\gl-A)\inv - H(\gl)(I+J^*QJP(\gl))\inv J^*QJK(\gl-A)\inv,
\end{equation}
noting that $R_\gl\in \B(X)$ and $\Ran R_\gl\subseteq D(L)$. By Proposition~\ref{prp:HPproperties}(a) we have
 $(G+J^*QJK)H(\gl)=I+J^*QJP(\gl)$, and hence  a straightforward calculation gives $\Ran R_\gl\subseteq D(A_Q)=\Ker(G+J^*QJK)$. Using the fact that $(\gl-L)H(\gl)=0$, again by Proposition~\ref{prp:HPproperties}(a), we see that $(\gl-A_Q)R_\gl=I$. Now let $x\in D(A_Q)$. Then $Gx=-J^*QJKx$ and hence 
 $$(\gl-A)^{-1}(\gl-L)x=x+H(\gl)J^*QJKx$$ by Proposition~\ref{prp:HPproperties}(a). Another straightforward calculation now shows that $R_\gl(\gl-A_Q)x=x$. Hence $R_\gl$ is the inverse of $\gl-A_Q$, so $\gl\in\rho(A_Q)\cap\overline{\C_+}$. If $x\in D(L)$, then  by impedance passivity of $(G,L,K)$ we have
 \begin{equation}\label{eq:pass_est}
\begin{aligned}
 \R\<Lx,x\>&\le \R\<Gx,Kx\>\\&=\R\<Gx+J^*QJKx,Kx\>-\R\<J^*QJKx,Kx\>\\&\le \R\<Gx+J^*QJKx,Kx\>-c\|JKx\|^2.
 \end{aligned}
  \end{equation}
 This establishes the estimate~\eqref{eq:diss_est} and in particular shows that $A_Q$ is dissipative. Since $1\in\rho(A)\cap\C_+\subseteq\rho(A_Q)\cap\C_+$, the operator $A_Q$ is maximally dissipative, so by the Lumer--Phillips theorem it generates a $C_0$-semigroup of contractions. Next we use Proposition~\ref{prp:HPproperties}(a) to observe that $(G+J^*QJK)H(1)=I+J^*QJP(1)$ and that $H(1)\in\B(U,D(L))$. It follows that $H(1)(I+J^*QJP(1))\inv\in\B(U,D(L))$ is a right-inverse of $G+J^*QJK$. Hence $(G+J^*QJK,L,K)$ is a boundary node on $(U,X,U)$, and by~\eqref{eq:pass_est} it is impedance passive.
 
 In order to prove~(b), let $\gl\in\rho(A)\cap \overline{\C_+}$ and $u\in U$. Then $x=H_Q(\gl)u$ satisfies $(\gl-L)x=0$ and $Gx+J^*QJKx=u$. Proposition~\ref{prp:HPproperties}(a) implies that 
 \begin{equation}\label{eq:HQ}
 x=H(\gl)Gx=H(\gl)u-H(\gl)J^*QJKx,
 \end{equation}
  and hence  $Kx=P(\gl)u-P(\gl)J^*QJKx.$ By the proof of part~(a) the operator $I+J^*QJP(\gl)$ is invertible, and hence $I+P(\gl)J^*QJ$, too, is invertible and 
 $$(I+P(\gl)J^*QJ)\inv =I-P(\gl)(I+J^*QJP(\gl))\inv J^*QJ.$$ It follows that 
 $$
Kx=(I+P(\gl)J^*QJ)\inv P(\gl)u=P(\gl)(I+J^*QJP(\gl))\inv u,$$
 giving $P_Q(\gl)=P(\gl)(I+J^*QJP(\gl))\inv$, as required. Together with~\eqref{eq:HQ} this implies that 
 $$H_Q(\gl)=H(\gl)(I-J^*QJP(\gl)(I+J^*QJP(\gl))\inv)=H(\gl)(I+J^*QJP(\gl))\inv,$$
thus completing the proof of part~(b).

We now turn to part~(c), starting with the final estimate. Let $\gl\in\rho(A_Q)\cap\overline{\C_+}$ and $u\in U$, and let $x=H_Q(\gl)J^*u$. Then $(\gl-L)x=0$ and $(G+J^*QJK)x=J^*u$. Moreover, $JP_Q(\gl)J^*u=JKx$. Using impedance passivity of $(G,L,K)$ we find that
$$\begin{aligned}
c\|JP_Q(\gl)J^*u\|^2&=c\R\<JKx,JKx\>\le\R\<QJKx,JKx\>\\&=\R\<J^*u,Kx\>-\R\<Gx,Kx\>\le \R\<u,JKx\>-\R\<Lx,x\>\\&= \R\<u,JP_Q(\gl)J^*u\>-\R\gl\,\|x\|^2\le\|u\|\|JP_Q(\gl)J^*u\|,
\end{aligned}
$$
and hence $\|JP_Q(\gl)J^*\|\le c\inv$, as required. 
In order to prove the remaining two estimates in~(c), we consider the system nodes $S$ and $S_Q$ associated with the boundary nodes $(G,L,K)$ and $(G+J^*QJK,L,K)$, respectively. By~\cite[Thm.~2.3]{MalSta06} these system nodes are given by~\eqref{eq:S_node} and by
$$S_Q=\begin{pmatrix}
A_Q\&B_Q\\ C_Q\&D_Q
\end{pmatrix}
=\begin{pmatrix}
L\\ K
\end{pmatrix}
\begin{pmatrix}
I\\G+J^*QJK
\end{pmatrix}\inv$$
with domain 
$$D(S_Q)=\Ran \begin{pmatrix}
I\\G+J^*QJK
\end{pmatrix}.
$$
The output operator $C_Q$ of $S_Q$ is the restriction of $K$ to $D(A_Q)=\Ker(G+J^*QJK)$, $H_Q(\gl)=(\gl-A_Q)\inv B_Q$ for all $\gl\in\rho(A_Q)\cap\overline{\C_+}$ and the transfer function of the system node $S_Q$ coincides on $\rho(A_Q)\cap\overline{\C_+}$ with $P_Q$. Furthermore, impedance passivity of the boundary nodes $(G,L,K)$ and $(G+J^*QJK,L,K)$ implies that the system nodes $S$ and $S_Q$ are impedance passive in the sense of~\cite[Def.~4.1]{Sta02}. Since $1\in\rho(A)\cap\overline{\C_+}$ we know that $I+J^*QJP(1)$ and  $I+P(1)J^*QJ$ are invertible. Moreover, by~\eqref{eq:AQ_inv} we have
$$(I-A_Q)\inv=(I - H(1)(I+J^*QJP(1))\inv J^*QJK)(I-A)\inv,$$
and a straightforward computation using Proposition~\ref{prp:HPproperties}(a) shows that
$$C_Q(I-A_Q)\inv=K(I-A_Q)\inv=(I+P(1)J^*QJ)\inv K(I-A)\inv.$$
Hence using Proposition~\ref{prp:Q}(b) we obtain
$$\begin{aligned}
\!\begin{pmatrix}
(I - H(1)(I+J^*QJP(1))\inv J^*QJK)(I-A)\inv& H(1)(I+J^*QJP(1))\inv\\ 
(I+P(1)J^*QJ)\inv K(I-A)\inv& P(1)(I+J^*QJP(1))\inv
\end{pmatrix}
\\=
\begin{pmatrix}
(I-A_Q)\inv& (I-A_Q)\inv B_Q\\ 
C_Q (I-A_Q)\inv& P_Q(1)
\end{pmatrix}.
\end{aligned}$$
It follows from~\cite[Thm.~7.4.7(ix)]{Sta05book} that $-J^*QJ$ is an admissible output feedback operator for $S$ in the sense of~\cite[Def.~7.4.2]{Sta05book}, and furthermore $S_Q$ is precisely the closed-loop system node resulting from $S$ by applying this feedback.  Now let $\gl\in \rho(A_Q)\cap\overline{\C_+}$, let $y\in X$ and define $x\in D(A_Q)$ by $x=(\gl-A_Q)\inv y$. Moreover, let $u=-J^*QJC_Qx\in U$. Then \cite[Thm.~7.4.5]{Sta05book} gives
$$Ax+Bu=Ax-BJ^*QJC_Qx=A_Qx\in X$$ 
and $(C\& D)(x,u)^\top=C_Qx$. 
Using~\cite[Thm.~4.2(iii)]{Sta02} and impedance passivity of $S$ we deduce that
$$\begin{aligned}
c\|JC_Qx\|^2&\le\R\<JC_Qx,QJC_Qx\>
=\R\left\<C\&D \begin{pmatrix}
x\\u
\end{pmatrix},J^*QJC_Qx\right\>\\&=-\R\left\<C\&D \begin{pmatrix}
x\\u
\end{pmatrix},u\right\>
\le-\R\<Ax+Bu,x\>\\&\le\R\<(\gl-A_Q)x,x\>\le\|(\gl-A_Q)\inv\|\|y\|^2,
\end{aligned}$$
and hence 
\begin{equation}\label{eq:Kres}
 \|JK(\gl-A_Q)\inv\|^2=\|JC_Q(\gl-A_Q)\inv\|^2 \le c\inv \|(\gl-A_Q)\inv\|,
 \end{equation}
as required.
Our argument shows
that the second inequality in~\eqref{eq:Kres} is also valid 
for any system node $S_Q$ which is 
obtained by feedback from an impedance passive system node $S$ with 
 an admissible feedback operator of the form $-J^\ast QJ$.
 In order to prove the final estimate in~(c), we consider the dual system node $(S_Q)^*$, which by~\cite[Thm.~7.6.1(ii)]{Sta05book}  coincides with the closed-loop system node obtained from the impedance passive system node $S^*$ by applying the admissible output feedback operator $(-J^*QJ)^*=-J^*Q^*J$. Furthermore, by~\cite[Lem.~6.2.14]{Sta05book} its generating operators are $(A_Q)^*$, $(C_Q)^*$ and $(B_Q)^*$. Let $\gl\in\rho(A_Q)\cap\overline{\C_+}$. Then $\overline{\gl}\in\rho((A_Q)^*)\cap\overline{\C_+}$ and, since $\R Q^*=\R Q\ge cI$,
the inequality in 
 \eqref{eq:Kres} gives %[OR ``the second inequality in''??]
$$\|J(B_Q)^*(\overline{\gl}-(A_Q)^*)\inv\|^2 \le c\inv \|(\overline{\gl}-(A_Q)^*)\inv\|=c\inv\|({\gl}-A_Q)\inv\|.$$ 
Noting that
$$\|H_Q(\gl)J^*\|=\|(\gl-A_Q)\inv B_QJ^*\|=\|J(B_Q)^*(\overline{\gl}-(A_Q)^*)\inv\| ,$$
where  $(\overline{\gl}-(A_Q)^*)\inv$ is the adjoint of $(\gl-A_Q)\inv\in\B(X,D(A_Q))$ and $(B_Q)^*\in\B(D((A_Q)^*),U)$, we deduce that $\|H_Q(\gl)J^*\|^2\le c\inv\|({\gl}-A_Q)\inv\|$, thus completing the proof.
\end{proof}

\begin{rem}
\label{rem:Qweakerconds}
As the proof of part~(a) makes clear, $(G+J^*QJK,L,K)$ is an impedance passive boundary node even if we replace the assumption that $\R Q\ge cI$ for some $c>0$ by the weaker conditions that $\R Q\ge0$ and $I+J^*QJP(\gl)$ is invertible for some $\gl\in\rho(A)\cap\overline{\C_+}$. 
By~\cite[Lem.~A.1(d)]{Pau19} this is in particular true whenever $Q$ is self-adjoint and $Q\geq 0$. 
\end{rem}

We conclude this preliminary section with a result providing sufficient conditions under which the roles of the input and output of an impedance passive boundary node can be interchanged. Note that if $(G, L, K)$ is an impedance passive boundary node and if $P(\gl)$ is invertible for some $\gl\in\rho(A)\cap\overline{\C_+}$, then $P(\gl)$ is  invertible for all $\gl\in{\C_+}$ by~\cite[Cor.~4.3]{Log20}.

\begin{prp}\label{prp:swap}
Let $(G,L,K)$ be an impedance passive boundary node on $(U,X,U)$ and suppose that $P(\gl)$ is invertible for some $\gl\in\rho(A)\cap\overline{\C_+}$.  Then $(K,L,G)$ is an impedance passive boundary node on $(U,X,U)$ and $\gl\in\rho(B)$, where $B$ denotes the restriction of $L$ to $\Ker K$. Furthermore, if $P_*$ denotes the counterpart of $P$ for the boundary node $(K,L,G)$ then 
$$\{\gl\in\rho(A)\cap i\RR:P(\gl)\mbox{ \rm is invertible}\}=\{\gl\in\rho(B)\cap i\RR:P_*(\gl)\mbox{ \rm is invertible}\},$$
and $P_*(\gl)=P(\gl)\inv$ for all $\gl \in\rho(A)\cap\overline{\C_+}$ such that $P(\gl)$ is invertible.
\end{prp}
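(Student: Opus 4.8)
The plan is to verify the three defining axioms of a boundary node for $(K,L,G)$ together with impedance passivity, and then to read off the identity $P_*=P\inv$ by solving the relevant abstract elliptic problem explicitly. Impedance passivity will be immediate, since $\R\iprod{Gx}{Kx}_U=\R\iprod{Kx}{Gx}_U$ for every $x\in D(L)$, so that condition~\eqref{eq:pass} for $(G,L,K)$ is literally the inequality required for $(K,L,G)$. Axiom~(i) is inherited verbatim from $(G,L,K)$. For axiom~(iii) I would fix a point $\gl_0\in\rho(A)\cap\overline{\C_+}$ at which $P(\gl_0)$ is invertible; the bounded inverse theorem then gives $P(\gl_0)\inv\in\B(U)$, and Proposition~\ref{prp:HPproperties}(a) shows that $H(\gl_0)P(\gl_0)\inv\in\B(U,D(L))$ is a right-inverse of $K$, since $KH(\gl_0)P(\gl_0)\inv=P(\gl_0)P(\gl_0)\inv=I$.

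The substantive step will be axiom~(ii), that the restriction $B$ of $L$ to $\Ker K$ generates a $C_0$-semigroup. One checks at once that $B$ is dissipative, as $\R\iprod{Bx}{x}_X=\R\iprod{Lx}{x}_X\le\R\iprod{Gx}{Kx}_U=0$ for $x\in\Ker K$, and that $B$ is closed by Lemma~\ref{lem:closed} together with boundedness of $K$ on $D(L)$. To exhibit a point of $\rho(B)$ in the open right half-plane I would use the fact, recalled just before the statement, that $P(\gl)$ is invertible for every $\gl\in\C_+$, together with $\C_+\subseteq\rho(A)$ (as $A$ generates a contraction semigroup); for such $\gl$ I claim that
$$R_\gl:=(\gl-A)\inv-H(\gl)P(\gl)\inv K(\gl-A)\inv\in\B(X)$$
is the inverse of $\gl-B$, by a computation parallel to the one in the proof of Proposition~\ref{prp:Q}(a). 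Indeed, Proposition~\ref{prp:HPproperties}(a) gives $KH(\gl)=P(\gl)$, whence $KR_\gl=0$ and $\Ran R_\gl\subseteq\Ker K$; it gives $(\gl-L)H(\gl)=0$, whence $(\gl-B)R_\gl=I$; and the identity $(\gl-A)\inv(\gl-L)x=x-H(\gl)Gx$ combined with $Kx=0$ for $x\in\Ker K$ gives $R_\gl(\gl-B)x=x$. Thus $\C_+\subseteq\rho(B)$, so $B$ is maximally dissipative (and hence densely defined), and the Lumer--Phillips theorem~\cite{EngNag00book} shows that it generates a contraction semigroup. The same formula for $R_\gl$ moreover shows that $\gl\in\rho(B)$ whenever $\gl\in\rho(A)$ and $P(\gl)$ is invertible, which in particular covers $\gl_0$ and yields the assertion $\gl\in\rho(B)$ in the statement.

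For the transfer functions, take any $\gl\in\rho(A)\cap\overline{\C_+}$ with $P(\gl)$ invertible, so that $\gl\in\rho(B)$ by the previous paragraph. For $u\in U$ the element $\tilde x:=H(\gl)P(\gl)\inv u\in D(L)$ satisfies $(\gl-L)\tilde x=0$, $K\tilde x=P(\gl)P(\gl)\inv u=u$ and $G\tilde x=GH(\gl)P(\gl)\inv u=P(\gl)\inv u$, so by the definition of the transfer function of $(K,L,G)$ we get $P_*(\gl)u=G\tilde x=P(\gl)\inv u$; hence $P_*(\gl)=P(\gl)\inv$, which is in particular invertible. Specialising $\gl$ to $\rho(A)\cap i\RR$ gives both the displayed identity and the inclusion ``$\subseteq$'' between the two sets. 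For the reverse inclusion I would rerun the argument with the impedance passive boundary node $(K,L,G)$ in place of $(G,L,K)$: the restriction of $L$ to $\Ker G$ is $A$, and swapping the input and output of $(K,L,G)$ recovers the transfer function $P$, so any $\gl\in\rho(B)\cap i\RR$ with $P_*(\gl)$ invertible lies in $\rho(A)$ and satisfies $P(\gl)=P_*(\gl)\inv$, which is invertible.

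I expect the crux to be axiom~(ii): verifying that $R_\gl$ has range contained in $\Ker K$ and is a genuine two-sided inverse of $\gl-B$, a computation that draws on all three parts of Proposition~\ref{prp:HPproperties}(a) and on the invertibility of $P$ throughout $\C_+$. The remaining ingredients --- impedance passivity, the bounded right-inverse of $K$, and the transfer-function bookkeeping --- are essentially formal.
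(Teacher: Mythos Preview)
Your proposal is correct and follows essentially the same route as the paper: the right-inverse $H(\gl)P(\gl)\inv$ for $K$, dissipativity of $B$ from the symmetry of the passivity inequality, the explicit resolvent formula $R_\gl=(\gl-A)\inv-H(\gl)P(\gl)\inv K(\gl-A)\inv$, and the transfer-function identity via $\tilde x=H(\gl)P(\gl)\inv u$ all appear in the paper's argument. The only cosmetic differences are that the paper establishes bijectivity of $\gl-B$ by checking injectivity and surjectivity separately (rather than verifying $R_\gl$ is a two-sided inverse in one go), and extends $P_*(\gl)=P(\gl)\inv$ from $\C_+$ to imaginary $\gl$ by continuity rather than by your direct computation; your version is if anything slightly cleaner on both counts.
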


\begin{proof}
In order to verify that $(K,L,G)$ is an impedance passive boundary node we only need to show that $B$ generates a $C_0$-semigroup on $X$ and that $K$ has a right-inverse, since~\eqref{eq:pass} is symmetric in $G$ and $K$. Let   $\gl\in\rho(A)\cap\overline{\C_+}$ be such that $P(\gl)$ is invertible and set $K^r=H(\gl)P(\gl)\inv$. By Proposition~\ref{prp:HPproperties}(a) we have $K^r\in\B(U,D(L))$ and $KK^r=I$, so $K^r$ is a right-inverse of $K$. Moreover, by~\eqref{eq:pass} we have $\R\< Bx,x\>\le0$ for all $x\in \Ker K$, so $B$ is dissipative. We now show that $\gl\in \rho(B)$. Since $B$ is the restriction of  $L$ to $\Ker K$, and $L$ is closed by Lemma~\ref{lem:closed}, it is straightforward to show that $B$ is closed. Hence it suffices to prove that $\gl-B$ is bijective. Suppose therefore that $x\in\Ker(\gl-B)$. Then $x\in \Ker K\cap\Ker (\gl-L)$, so using Proposition~\ref{prp:HPproperties}(a) we obtain
$$0=Kx=KH(\gl)Gx=P(\gl)Gx.$$ 
Since $P(\gl)$ is assumed to be invertible we deduce that $Gx=0$, and hence $x\in \Ker(\gl-A)=\{0\}$, so $\gl-B$ is injective. Now let $y\in X$ be arbitrary, and define $x\in X$ by
$$x=(\gl-A)\inv y-H(\gl)P(\gl)\inv K(\gl-A)\inv y.$$
Straightforward calculations using Proposition~\ref{prp:HPproperties}(a) show that $x\in \Ker K$ and that $(\gl-L)x=y$, so $\gl-B$ is surjective. It follows from the Lumer--Phillips theorem that $B$ is the generator of a $C_0$-semigroup on $X$, and hence $(K,L,G)$ is an impedance passive boundary node, as required. Furthermore, for $\gl\in\C_+$ the operator $P(\gl)$ is invertible and the definitions of $P$ and $P_*$ imply that $P_*(\gl)=P(\gl)\inv$. In particular, $P_*(\gl)$ is invertible.  If $\gl\in \rho(A)\cap i\RR$ is such that $P(\gl)$ is invertible then, as was shown above, $\gl\in\rho(B)$ and by continuity we again have $P_*(\gl)=P(\gl)\inv$. This also proves that
$$\{\gl\in\rho(A)\cap i\RR:P(\gl)\mbox{ \rm is invertible}\}\subseteq\{\gl\in\rho(B)\cap i\RR:P_*(\gl)\mbox{ \rm is invertible}\},$$
The reverse inclusion follows on interchanging the roles of $G$ and $K$.
\end{proof}

\begin{rem}
\label{rem:BCSIOswapPartial}
It is also possible to interchange the roles of only parts of the inputs and outputs of  an impedance passive boundary node $(G,L,K)$ under a strictly weaker condition. To consider this situation let $U=U_1\times U_2$ and 
$$
G = \begin{pmatrix}
G_1\\G_2
\end{pmatrix}, \qquad 
K = \begin{pmatrix}K_1\\K_2\end{pmatrix}, \qquad 
G' = \begin{pmatrix}K_1\\G_2\end{pmatrix}, \qquad 
K' = \begin{pmatrix}G_1\\K_2\end{pmatrix}.
$$
Furthermore, let $L_1$ be the restriction of $L$ to $\Ker G_2$ and let $P_1$ be the transfer function of the impedance passive boundary node $(G_1,L_1,K_1)$.
Then $(G',L,K')$ is an impedance passive boundary node provided  $P_1(\gl)$ is invertible for some $\gl\in\rho(A)\cap \overline{\C_+}$.
We leave the straightforward modification of the proof of Proposition~\ref{prp:swap} to the reader.
\end{rem}

\section{Abstract coupled systems}\label{sec:coupled}

In this section we study well-posedness and resolvent estimates for coupled systems. 
We begin by considering the case of two coupled boundary control systems; later in the section, we will consider the case of a boundary control system coupled with a linear system. 

Given Hilbert spaces $X_1,$ $X_2$ and $U_1,U_2$, let $(G_1, L_1, K_1)$ and $( G_2, L_2,  K_2)$ be two impedance passive boundary nodes on $(U_1,X_1,U_1)$ and $(U_2,X_2,U_2)$, respectively, and let $J\in\B(U_1,U_2)$. We consider the coupled system
\begin{equation}\label{eq:coupled_sys}
\left\{\begin{aligned}
	  \dot{z}_1(t) &= L_1 z_1(t),  \qquad&t\ge0,\\
	  	  \dot{z}_2(t) &= L_2 z_2(t),  \qquad&t\ge0,\\
	  G_1 z_1(t) &=J^*K_2z_2(t), &t\ge0,\\
	  G_2z_2(t) &= -JK_1 z_1(t), &t\ge0,\\
	  z_1(0)&\in X_1,\ z_2(0)\in X_2.
	\end{aligned}\right.
\end{equation}
We may reformulate this coupled system as an abstract Cauchy problem
\begin{equation}\label{eq:ACP}
\left\{\begin{aligned}
	  \dot{z}(t) &= A z(t),  \qquad t\ge0,\\
	  	  z(0) &=z_0  ,
	\end{aligned}\right.
\end{equation}
for $z(\cdot)=(z_1(\cdot),z_2(\cdot))^\top$ on $X=X_1\times X_2$, where $z_0=(z_1(0),z_2(0))^\top\in X$  and $A\colon D(A)\subseteq X\to X$ is defined by $A=\diag(L_1,L_2)$ with domain 
$$D(A)=\bigg\{\begin{pmatrix}x_1\\x_2
\end{pmatrix}\in D(L_1)\times D(L_2): G_1x_1=J^*K_2x_2,\ G_2x_2=-JK_1x_1\bigg\}.$$
We denote by $A_j$ the restriction of $L_j$ to $\Ker G_j$ and we denote the transfer functions of the two boundary nodes by  $H_j$ and $P_j$  for $j=1,2$.

The following is our main result. It allows us to deduce a growth bound for the resolvent of $A$ from information about the boundary nodes $(G_1+J^*QJK_1,L_1, K_1)$ and $(G_2, L_2, K_2)$ where $Q\in \B(U_2)$ is a self-adjoint operator such that $Q\geq 0$. 
We note that $(G_1+J^* QJK_1,L_1, K_1)$ is indeed an impedance passive boundary node by Remark~\ref{rem:Qweakerconds}. One may think of it as a damped version of the boundary node $(G_1,L_1, K_1)$, and in the coupled system the damping is replaced by the interconnection with the boundary node $(G_2,L_2,  K_2)$.

\begin{thm}\label{thm:res}
Let $(G_1, L_1, K_1)$ and $(G_2,L_2,  K_2)$ be two impedance passive boundary nodes on the spaces $(U_1,X_1,U_1)$ and $(U_2,X_2,U_2)$, respectively, 
let $J\in\B(U_1,U_2)$ and let $Q\in \B(U_2)$ be a self-adjoint operator such that $Q\geq 0$. Let $A_0$ 
 denote the restriction of $L_1$ to $\Ker (G_1+J^*QJK_1)$
and let $H_0$ be the transfer function of $(G_1+J^\ast QJ,L_1,K_1)$.
 Suppose there exists a non-empty set $E\subseteq\{s\in\RR:is\in \rho(A_0)\cap\rho(A_2)\}$, and let 
$N_0,M_0,M_2\colon E\to[r,\infty)$, for some $r>0$, be such that 
$\norm{H_0(is)}\leq N_0(s)$, $s\in E$, and
$$\|(is-A_j)\inv\|\le M_j(s),\qquad s\in E,$$
for  $j=0,2$. Suppose furthermore that there exists a function $\eta\colon E\to(0,\infty)$ such that 
$\R P_2(is)\ge\eta(s)I$ for all $s\in E.$
Then $A$ generates a contraction semigroup on $X$, $iE\subseteq\rho(A)$ and
\begin{equation}\label{eq:res_est}
\|(is-A)\inv\|\lesssim
M_0(s)
+N_0(s)^2M_2(s)^2 \frac{\mu(s)}{\eta(s)},
\qquad s\in E,
\end{equation}
where $\mu(s)=1+\|P_2(1+is)\|^2$ for  $s\in \RR$.
\end{thm}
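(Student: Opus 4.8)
The plan is to exploit the factorization of the coupled system operator $A$ through the damped boundary node $(G_1 + J^*QJK_1, L_1, K_1)$, whose generator is $A_0$. The key observation is that for $s \in E$ and $y = (y_1, y_2)^\top \in X$, solving $(is - A)z = y$ for $z = (z_1, z_2)^\top$ amounts to solving the two decoupled equations $(is - L_j)z_j = y_j$ together with the algebraic constraints $G_1 z_1 = J^*K_2 z_2$ and $G_2 z_2 = -JK_1 z_1$. First I would use Proposition~\ref{prp:HPproperties}(a)(iii) applied to the second boundary node: writing $z_2 = (is - A_2)^{-1} y_2 + H_2(is) G_2 z_2$, and inserting $G_2 z_2 = -JK_1 z_1$, gives $z_2$ in terms of $z_1$ and $y_2$. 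Then $K_2 z_2 = K_2(is - A_2)^{-1} y_2 - P_2(is) J K_1 z_1$, so the first constraint becomes $(G_1 + J^* P_2(is) J K_1) z_1 = J^* K_2 (is - A_2)^{-1} y_2$.

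**Reduction to the damped node.** The operator $G_1 + J^* P_2(is) J K_1$ should be compared with $G_1 + J^* Q J K_1$. Writing $P_2(is) = Q + (P_2(is) - Q)$ — or more robustly, observing that $\R P_2(is) \ge \eta(s) I$ means $P_2(is)$ itself is boundedly invertible with $\|P_2(is)^{-1}\| \le \eta(s)^{-1}$ — I would apply Proposition~\ref{prp:Q}(b) (or rather its proof) with the self-adjoint part absorbed appropriately, to express the resolvent of $A$ via $H_0$, $(is-A_0)^{-1}$, and the transfer function $P_2$. Concretely: $(is - L_1)z_1 = y_1$ with the boundary condition above means $z_1 = (is - A_0)^{-1} y_1 + H_0(is)\big[(G_1 + J^*QJK_1) z_1\big]$, and $(G_1 + J^*QJK_1)z_1 = J^*K_2(is-A_2)^{-1}y_2 + J^*(Q - P_2(is))JK_1 z_1$. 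The term $J^*(Q - P_2(is))JK_1 z_1$ is the coupling correction; bounding it requires estimating $K_1 z_1$, which is where the estimates $\|K_1(is - A_0)^{-1}\|^2 \le c^{-1}\|(is-A_0)^{-1}\|$ and $\|H_0(is) J^*\|^2 \le c^{-1}\|(is-A_0)^{-1}\|$ from Proposition~\ref{prp:Q}(c) come in — though here $Q \ge 0$ is only positive semidefinite, so I would instead work with $P_2(is)$ directly (which satisfies $\R P_2(is) \ge \eta(s) I$) as the effective feedback, invoking Remark~\ref{rem:Qweakerconds} to treat $(G_1 + J^* P_2(is) J K_1, L_1, K_1)$ as impedance passive for each fixed $s$. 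The Staffans--Malinen feedback machinery then yields, for fixed $s \in E$, that $is \in \rho(A)$ iff $I + J^* P_2(is) J P_1'(is)$-type operators are invertible, and the resolvent factors accordingly.

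**Assembling the bound.** Combining the pieces, $\|(is - A)^{-1}\|$ is controlled by $\|(is - A_0)^{-1}\| \le M_0(s)$ plus a term of the form $\|H_0(is)\| \cdot \|(\text{Schur complement})^{-1}\| \cdot \|J^* K_2 (is-A_2)^{-1}\|$. The Schur complement is $I + J^*(P_2(is) - Q) J (\text{something involving } P_1)$ acting on $U_1$; its inverse is controlled using $\R P_2(is) \ge \eta(s) I$, yielding a factor $1/\eta(s)$, while $\|K_2(is - A_2)^{-1}\|$ is bounded via Proposition~\ref{prp:HPproperties}(b) and the estimates relating values on $i\RR$ to values at $1 + i\RR$: specifically $\|K_2(is-A_2)^{-1}\| \le \|K_2(1 + is - A_2)^{-1}\|(1 + M_2(s))$ and $\|K_2(1+is-A_2)^{-1}\|^2 \le \|P_2(1+is)\|$, producing the $M_2(s)^2 \mu(s)$ factor. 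The $N_0(s)^2$ comes from $\|H_0(is)\|$ appearing together with $\|H_0(is)J^*\|$ or twice through the feedback loop. Generation of the contraction semigroup and $iE \subseteq \rho(A)$ follow from the Lumer--Phillips theorem applied to $A$, using impedance passivity of the diagonal boundary node (the Remark after Proposition~\ref{prp:HPproperties}) together with the invertibility just established; dissipativity of $A$ is immediate since $\R\langle Az, z\rangle = \R\langle L_1 z_1, z_1\rangle + \R\langle L_2 z_2, z_2\rangle \le \R\langle G_1 z_1, K_1 z_1\rangle + \R\langle G_2 z_2, K_2 z_2\rangle = \R\langle J^* K_2 z_2, K_1 z_1\rangle - \R\langle J K_1 z_1, K_2 z_2\rangle = 0$.

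**Main obstacle.** The hard part will be handling the feedback operator $Q$ being only positive semidefinite rather than uniformly positive, so the clean estimates of Proposition~\ref{prp:Q}(c) (which assume $\R Q \ge cI$) are not directly available for $(G_1 + J^*QJK_1, L_1, K_1)$. The resolution is to route everything through the effective $s$-dependent feedback $P_2(is)$, which does satisfy the uniform lower bound $\R P_2(is) \ge \eta(s) I$ on $E$, and to verify carefully that the closed-loop system node obtained by feeding back $-J^* P_2(is) J$ is well-defined and impedance-passive for each fixed $s$ — and that the resulting resolvent formula is uniform enough in $s$ to extract the stated bound. A secondary technical point is tracking exactly where the squares in $N_0(s)^2 M_2(s)^2$ originate: this requires being disciplined about applying the square-root estimates from Proposition~\ref{prp:HPproperties}(b) and Proposition~\ref{prp:Q}(c) to the cross terms $K_1 z_1$ and $K_2 z_2$ rather than crudely to $z_1, z_2$ themselves.
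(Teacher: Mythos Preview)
Your proposal is essentially the paper's approach: reduce via Proposition~\ref{prp:HPproperties}(a)(iii) on the second node to the operator $S_{is}=is-L_1$ on $\Ker(G_1+J^*P_2(is)JK_1)$, write $x\in D(S_{is})$ as $x=(is-A_0)^{-1}y+H_0(is)J^*(Q-P_2(is))JK_1x$, and then control $JK_1x$ using the $s$-dependent feedback $P_2(is)$ (which has $\R P_2(is)\ge\eta(s)I$) rather than $Q$. One clarification worth making precise: the paper does \emph{not} bound $\|S_{is}^{-1}\|$ by inverting a Schur complement on $U_1$ or $U_2$. Instead it applies the dissipativity inequality~\eqref{eq:diss_est} of Proposition~\ref{prp:Q}(a) with $P_2(is)$ in the role of $Q$ to obtain $\R\langle S_{is}x,x\rangle\ge\eta(s)\|JK_1x\|^2$, hence $\|JK_1x\|^2\le\eta(s)^{-1}\|x\|\|S_{is}x\|$; substituting this into the formula for $x$ and using Young's inequality gives $\|x\|\lesssim\bigl(M_0(s)+N_0(s)^2M_2(s)^2\mu(s)\eta(s)^{-1}\bigr)\|S_{is}x\|$ directly, with the $M_2(s)^2\mu(s)$ factor entering through the bound $\|P_2(is)\|\lesssim M_2(s)\|P_2(1+is)\|$ from Proposition~\ref{prp:HPproperties}(b). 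The remaining three blocks of the explicit formula for $(is-A)^{-1}$ in Lemma~\ref{lem:coupled} are then estimated using Proposition~\ref{prp:Q}(c), again with $P_2(is)$ playing the role of $Q$, together with $\|H_2(is)\|,\|K_2(is-A_2)^{-1}\|\lesssim M_2(s)\|P_2(1+is)\|^{1/2}$. Generation of the contraction semigroup is established beforehand in Proposition~\ref{prp:gen} (via a point $\lambda\in\C_+$), not inferred after the resolvent estimate.
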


\begin{rem}\label{rem:bdd}
Theorem~\ref{thm:res} is particularly useful if we may choose $E=\RR$ or $E=\RR\setminus\{0\}$. 
We also note that the estimate in~\eqref{eq:res_est} simplifies in some important cases. For instance, if the semigroup generated by $A_j$ is exponentially stable for $j=0$ or $j=2$, then we may take the corresponding function $M_j$ to be constant. 
Moreover,
if $Q\geq cI$ for some $c>0$ (in particular, if $Q=I$), then Proposition~\ref{prp:Q} implies that
 $N_0(s)^2\lesssim M_0(s)$, $s\in E$, on the right-hand side of~\eqref{eq:res_est}.
Finally,
 by \cite[Lem.~13.1.10]{JacZwa12book} the function $\mu$ is bounded if $(G_2,L_2,K_2)$ is an \emph{(externally) well-posed} boundary node in the sense of \cite[Def.~13.1.3]{JacZwa12book}.
\end{rem}

\begin{rem}\label{rem:Riesz_res}
In coupled PDE models it is often natural to consider boundary nodes in which the codomains either of $G_1$ and $K_1$ or of $G_2$ and $K_2$ do not coincide but instead are each other's (conjugate) dual spaces with respect to an intermediate \emph{pivot space}. As described in Remark~\ref{rem:Riesz}, this case can be reduced to the situation considered in Theorem~\ref{thm:res}, and in particular the result still applies once the appropriate Riesz--Fréchet isomorphism is taken into consideration.
\end{rem}

The first step towards proving Theorem~\ref{thm:res} is the following  result.

\begin{lem}\label{lem:coupled}
Let $X_1, X_2$ and $U_1, U_2$ be Hilbert spaces, and let $(G_1, L_1, K_1)$ and $(G_2, L_2, K_2)$ be two impedance passive boundary nodes on $(U_1,X_1,U_1)$ and $(U_2,X_2,U_2)$, respectively, and let $J\in\B(U_1,U_2)$. Suppose that $\gl\in\rho(A_2)\cap\overline{\C_+}$ is such that $\R P_2(\gl)\ge cI$ for some $c>0$, and define $S_\gl\colon D(S_\gl)\subseteq X_1\to X_1$ by
$$S_\gl=\gl-L_1,\qquad D(S_\gl)=\Ker(G_1+J^*P_2(\gl)JK_1).$$
Then $\gl\in \rho(A)$ if and only if $S_\lambda$ is invertible, and if $S_\lambda$ is invertible then there exists $H_\gl\in\B(U_1,X)$ such that $\Ran H_\gl\subseteq \Ker(\gl-L_1)$, $$(G_1+J^*P_2(\gl) JK_1)H_\gl=I$$
and $(\gl-A)^{-1}$ is given by
$$\begin{pmatrix}
S_\gl\inv & H_\gl J^* K_2 (\gl-A_2)\inv\\
-H_2(\gl)JK_1 S_\gl\inv & (\gl-A_2)\inv -H_2(\gl)JK_1 H_\gl J^* K_2 (\gl-A_2)\inv 
\end{pmatrix}.$$
Finally, if $\gl\in i\RR$ then $S_\gl$ is invertible if and only if it is bounded below.
\end{lem}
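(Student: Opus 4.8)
The plan is to establish the equivalence $\lambda\in\rho(A)\iff S_\lambda$ invertible by exhibiting an explicit candidate for $(\lambda-A)^{-1}$ built out of the resolvent data of the two boundary nodes, and then to upgrade ``invertible'' to ``bounded below'' when $\lambda\in i\RR$ using dissipativity. First I would construct the operator $H_\lambda$: since $\lambda\in\rho(A_2)\cap\overline{\C_+}$ and $\R P_2(\lambda)\ge cI$, the operator $Q:=P_2(\lambda)$ satisfies $\R Q\ge cI$, so Proposition~\ref{prp:Q}(a) applies to the boundary node $(G_1,L_1,K_1)$ with this $Q$ and with $J$ in place of $J$; in particular $(G_1+J^*P_2(\lambda)JK_1,L_1,K_1)$ is an impedance passive boundary node, $I+J^*P_2(\lambda)JP_1(\lambda)$ is invertible, and $H_\lambda:=H_1(\lambda)(I+J^*P_2(\lambda)JP_1(\lambda))^{-1}$ lies in $\B(U_1,D(L_1))$ with $\Ran H_\lambda\subseteq\Ker(\lambda-L_1)$ and $(G_1+J^*P_2(\lambda)JK_1)H_\lambda=I$, using Proposition~\ref{prp:HPproperties}(a)(i). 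This is the heart of the construction and where I expect the only real bookkeeping to live.

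Next I would verify that the displayed $2\times2$ block matrix $R_\lambda$ is a two-sided inverse of $\lambda-A$ whenever $S_\lambda$ is invertible. For the forward implication $(\lambda-A)R_\lambda=I$: one checks that the range of $R_\lambda$ lands in $D(A)$, i.e.\ that for $(y_1,y_2)^\top\in X$ the output $(x_1,x_2)^\top$ satisfies the two coupling conditions $G_1x_1=J^*K_2x_2$ and $G_2x_2=-JK_1x_1$. The second of these follows directly from $\Ran H_2(\lambda)\subseteq\Ker(\lambda-L_1)$... more precisely from $G_2H_2(\lambda)=I$ together with $x_2=(\lambda-A_2)^{-1}(\cdots)-H_2(\lambda)JK_1x_1$ and the fact that $(\lambda-A_2)^{-1}y_2\in\Ker G_2$. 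The first coupling condition follows from $x_1=S_\lambda^{-1}y_1+H_\lambda J^*K_2(\lambda-A_2)^{-1}y_2$: the first summand lies in $\Ker(G_1+J^*P_2(\lambda)JK_1)$, so applying $G_1+J^*P_2(\lambda)JK_1$ picks out $J^*K_2(\lambda-A_2)^{-1}y_2$, and one rewrites $P_2(\lambda)JK_1x_1$ in terms of $K_2x_2$ using $P_2(\lambda)JK_1H_2(\lambda)=K_2H_2(\lambda)=\cdots$ — actually the cleaner route is to use that $K_2x_2 = P_2(\lambda)$ applied to the appropriate boundary datum, via Proposition~\ref{prp:HPproperties}(a)(iii). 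Applying $\diag(\lambda-L_1,\lambda-L_2)$ and using $(\lambda-L_1)H_\lambda=0$, $(\lambda-L_1)S_\lambda^{-1}=I$, $(\lambda-L_2)H_2(\lambda)=0$, $(\lambda-L_2)(\lambda-A_2)^{-1}=I$ then collapses $(\lambda-A)R_\lambda$ to $I$. For the reverse implication $R_\lambda(\lambda-A)=I$, one starts from $(x_1,x_2)^\top\in D(A)$, uses the coupling conditions to rewrite $G_1x_1,G_2x_2$, and invokes the identities $(\lambda-A_j)^{-1}(\lambda-L_j)x_j=x_j-H_j(\lambda)G_jx_j$ from Proposition~\ref{prp:HPproperties}(a)(iii). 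Conversely, if $\lambda\in\rho(A)$ one shows $S_\lambda$ is injective (an element of $\Ker S_\lambda$ together with $0$ in the second slot lies in $\Ker(\lambda-A)$) and surjective (feeding $(y_1,0)$ through $(\lambda-A)^{-1}$ produces a preimage), so $S_\lambda$ is invertible; here closedness of $S_\lambda$, which follows from Lemma~\ref{lem:closed} exactly as in the proof of Proposition~\ref{prp:swap}, lets us conclude bijectivity suffices.

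Finally, for $\lambda=is\in i\RR$ I would argue that $S_\lambda$ invertible $\iff$ $S_\lambda$ bounded below. One direction is trivial. For the other, suppose $S_{is}$ is bounded below; since $S_{is}$ is closed (same argument as above, using that $L_1$ is closed and $K_1\in\B(D(L_1),U_1)$ so $\Ker(G_1+J^*P_2(is)JK_1)$ is closed in the graph norm), boundedness below gives that $\Ran S_{is}$ is closed and $S_{is}$ is injective, so it remains to show $\Ran S_{is}=X_1$, equivalently $\Ran S_{is}$ is dense. The natural tool is the adjoint: $\Ran S_{is}$ is dense iff $(S_{is})^*$ is injective, and one identifies $(S_{is})^*$ with $-\overline{\lambda}-L_1^*$ restricted to an appropriate domain — or, more in the spirit of the paper, one observes that the boundary node $(G_1+J^*P_2(is)JK_1,L_1,K_1)$ is impedance passive (Proposition~\ref{prp:Q}(a) with $Q=P_2(is)$, noting $\R P_2(is)\ge cI$ because $is\in\rho(A_2)$ forces... wait, we only assumed $\R P_2(\lambda)\ge cI$ directly), hence $A_0' := L_1|_{\Ker(G_1+J^*P_2(is)JK_1)}$ is maximally dissipative and generates a contraction semigroup, and then $is-A_0' = S_{is}$ with $is\in\overline{\C_+}$ on the boundary of the resolvent set — a dissipative operator $A_0'$ with $is-A_0'$ bounded below and closed range must have $is\in\rho(A_0')$, since for maximally dissipative generators the spectrum meets $i\RR$ only through approximate eigenvalues, i.e.\ a bounded-below closed-range $is-A_0'$ on a Hilbert space is automatically onto because $\overline{is}+(A_0')^*=-is+(A_0')^*$ is then injective by the dissipativity of $(A_0')^*$. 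The hard part here is pinning down which dissipativity/adjoint statement gives surjectivity most cleanly; I expect a one-line appeal to the fact that for an $m$-dissipative operator on a Hilbert space, $\lambda-A_0'$ being bounded below for $\R\lambda\ge0$ already implies $\lambda\in\rho(A_0')$ — this is standard and can be cited, e.g.\ via \cite[Thm.~II.3.15]{EngNag00book} applied to $A_0'$ together with the fact that its adjoint is also dissipative.
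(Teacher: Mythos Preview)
Your overall strategy matches the paper's, but there are two concrete gaps.

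First, your construction of $H_\lambda$ as $H_1(\lambda)(I+J^*P_2(\lambda)JP_1(\lambda))^{-1}$ via Proposition~\ref{prp:Q}(b) presupposes $\lambda\in\rho(A_1)$, which the lemma does not assume: the hypotheses only place $\lambda$ in $\rho(A_2)\cap\overline{\C_+}$. The paper avoids this by arguing abstractly: since $(G_1+J^*P_2(\lambda)JK_1,L_1,K_1)$ is an impedance passive boundary node (this part of Proposition~\ref{prp:Q}(a) needs no hypothesis on $\lambda$) and invertibility of $S_\lambda$ means precisely that $\lambda$ lies in the resolvent of the generator associated with \emph{that} boundary node, the existence of $H_\lambda$ with the stated properties follows directly from \cite[Rem.~10.1.5 \& Prop.~10.1.2]{TucWei09book}, exactly as the transfer functions $H,P$ were constructed in Section~\ref{sec:prelim}. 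Your explicit formula is not needed and not available in this generality.

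Second, your injectivity step for the converse implication is wrong as written: if $x_1\in\Ker S_\lambda$ then $(x_1,0)^\top$ is generally \emph{not} in $D(A)$, because the coupling conditions would force $G_1x_1=0$ and $JK_1x_1=0$, neither of which follows from $x_1\in\Ker(G_1+J^*P_2(\lambda)JK_1)$. The correct second component is $x_2=-H_2(\lambda)JK_1x_1$: then $G_2x_2=-JK_1x_1$ and $J^*K_2x_2=-J^*P_2(\lambda)JK_1x_1=G_1x_1$ by Proposition~\ref{prp:HPproperties}(a), so $(x_1,x_2)^\top\in D(A)\cap\Ker(\lambda-A)$ and $\lambda\in\rho(A)$ forces $x_1=0$. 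Your surjectivity argument and your treatment of the case $\lambda\in i\RR$ are correct; for the latter the paper simply cites \cite[Lem.~2.3]{AreBat88} or \cite[Prop.~4.3.1 \& Cor.~4.3.5]{AreBat11book}, which encapsulate exactly the adjoint/dissipativity argument you sketch.
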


\begin{proof}
Let $\gl\in\rho(A_2)\cap\overline{\C_+}$ be such that $\R P_2(\gl)\ge cI$ for some $c>0$. By Proposition~\ref{prp:Q},  $(G_1+J^*P_2(\gl)JK_1,L_1,K_1)$ is an impedance passive boundary node. Suppose first that $S_\gl$ is invertible. Then the existence of $H_\gl$ with the required properties follows from \cite[Rem.~10.1.5 \& Prop.~10.1.2]{TucWei09book}. We will show that $\gl\in\rho(A)$ and verify the formula for $(\gl-A)\inv$. Let
$$R_\gl=\begin{pmatrix}
S_\gl\inv & H_\gl J^*K_2 (\gl-A_2)\inv\\
-H_2(\gl)JK_1 S_\gl\inv & (I -H_2(\gl)JK_1 H_\gl J^*K_2) (\gl-A_2)\inv 
\end{pmatrix}.$$
We have $G_2H_2(\gl)=I$ and $K_2H_2(\gl)=P_2(\gl)$ by Proposition~\ref{prp:HPproperties}(a), and $G_2(\gl-A_2)\inv=0$ as $\Ran(\gl-A_2)\inv=D(A_2)=\Ker G_2$. Straightforward calculations based on these identities and the defining properties of $H_\gl$ show that $R_\gl$ maps into $D(A)$ and that $(\gl-A)R_\gl=I$. It remains to prove that $R_\gl$ is a left-inverse of $\gl-A$. To this end, let $x=(x_1,x_2)^\top\in D(A)$ and define $y=(y_1,y_2)^\top$ by $y=R_\gl(\gl-A)x$. We aim to show that $y=x$. Using the fact that $S_\gl\inv$ maps into $\Ker(G_1+J^*P_2(\gl)JK_1)$, the properties of $H_\gl$ and Proposition~\ref{prp:HPproperties}(a) we obtain
$$\begin{aligned}
(G_1+J^*P_2(\gl)JK_1)y_1&=J^*K_2(\gl-A_2)\inv(\gl-L_2)x_2\\&=J^*K_2x_2-J^*P_2(\gl)G_2x_2\\&=(G_1+J^*P_2(\gl)JK_1)x_1,
\end{aligned}$$
where in the final step we have used that $J^*K_2x_2=G_1x_1$ and $ G_2x_2=-JK_1x_1$. Hence $y_1-x_1\in \Ker(G_1+J^*P_2(\gl)JK_1)=D(S_\gl)$. Moreover, since $\Ran H_\gl\subseteq\Ker(\gl-L_1)$, we have
$$(\gl-L_1)y_1=(\gl-L_1)S_\gl\inv(\gl-L_1)x_1=(\gl-L_1)x_1,$$
and hence $y_1-x_1\in\Ker S_\gl=\{0\}$, giving $y_1=x_1$. Another application of Proposition~\ref{prp:HPproperties}(a) gives 
$$\begin{aligned}
y_2&=-H_2(\gl)JK_1  y_1+(\gl-A_2)\inv(\gl-L_2)x_2\\&=x_2-H_2(\gl)(JK_1  x_1+G_2x_2)=x_2,
\end{aligned}$$
and hence $y=x$. Thus $\gl\in\rho(A)$ and  $R_\gl=(\gl-A)\inv$, as required. 

Now suppose conversely that $\gl\in\rho(A)$. We prove that $S_\gl$ is invertible. In order to prove that it is injective, suppose that $x_1\in\Ker S_\gl$. Then $(G_1+J^*P_2(\gl)JK_1)x_1=0$ and $(\gl-L_1)x_1=0$. Let $x_2=-H_2(\gl)JK_1x_1\in D(L_2)$. Then $x_2\in \Ker(\gl-L_2)$, $G_2x_2=-JK_1x_1$ and $J^*K_2x_2=-J^*P_2(\gl)JK_1x_1=G_1x_1$ by Proposition~\ref{prp:HPproperties}(a), so $(x_1,x_2)^\top\in \Ker (\gl-A)$. Since $\gl-A$ is assumed to be invertible, this implies in particular that $x_1=0$, and hence $S_\gl$ is injective. Now suppose that $y_1\in X_1$. Let $y=(y_1,0)^\top\in X$ and define $x=(x_1,x_2)^\top\in D(A)$ by $x=(\gl-A)\inv y$. Since $x_2\in\Ker(\gl-L_2)$ we may use Proposition~\ref{prp:HPproperties}(a) to obtain
$$G_1x_1=J^*K_2x_2=J^*K_2H_2(\gl)G_2x_2=-J^*P_2(\gl)JK_1x_1,$$
 and hence $x_1\in\Ker(G_1+J^*P_2(\gl)JK_1)=D(S_\gl)$. Since $S_\gl x_1=(\gl-L_1)x_1=y_1$, it follows that $S_\gl$ is surjective. Since $L_1$ is closed by Lemma~\ref{lem:closed}, a straightforward argument shows that $S_\gl$ is closed. It follows that $S_\gl$ is invertible, as required.

Finally, since the restriction of $L_1$ to $\Ker(G_1+J^*P_2(\gl)JK_1)$ generates a contraction semigroup on $X_1$, it follows from \cite[Lem.~2.3]{AreBat88}, or alternatively from \cite[Prop.~4.3.1 \& Cor.~4.3.5]{AreBat11book}, that if $\gl\in i\RR$ then $S_\gl$ has dense range whenever it is injective. In particular, $S_\gl$ is invertible if and only if it is bounded below.
\end{proof}

\begin{prp}\label{prp:gen}
Let $(G_1, L_1, K_1)$ and $(G_2, L_2, K_2)$ be two impedance passive boundary nodes on the spaces $(U_1,X_1,U_1)$ and $(U_2,X_2,U_2)$, respectively, and let $J\in\B(U_1,U_2)$. If $\R P_2(\gl) $ is invertible for some $\gl\in\rho(A_2)\cap \overline{\C_+}$, then $A$ generates a contraction semigroup on $X$.
\end{prp}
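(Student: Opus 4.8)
The plan is to use the Lumer--Phillips theorem, so it suffices to show that $A$ is dissipative and that $\lambda - A$ is surjective for some $\lambda$ in the open right half-plane. Dissipativity is the easy part: for $x = (x_1, x_2)^\top \in D(A)$ the coupling conditions read $G_1 x_1 = J^* K_2 x_2$ and $G_2 x_2 = -JK_1 x_1$, so impedance passivity of the two boundary nodes gives
\begin{equation*}
\R\iprod{Ax}{x}_X = \R\iprod{L_1 x_1}{x_1}_{X_1} + \R\iprod{L_2 x_2}{x_2}_{X_2} \le \R\iprod{G_1 x_1}{K_1 x_1}_{U_1} + \R\iprod{G_2 x_2}{K_2 x_2}_{U_2},
\end{equation*}
and substituting the coupling conditions the right-hand side becomes $\R\iprod{J^* K_2 x_2}{K_1 x_1}_{U_1} - \R\iprod{JK_1 x_1}{K_2 x_2}_{U_2} = \R\iprod{K_2 x_2}{JK_1 x_1}_{U_2} - \R\iprod{JK_1 x_1}{K_2 x_2}_{U_2} = 0$. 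Hence $A$ is dissipative.

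For surjectivity, I would invoke Lemma~\ref{lem:coupled}. By hypothesis there exists $\lambda_0 \in \rho(A_2) \cap \overline{\C_+}$ with $\R P_2(\lambda_0)$ invertible; since $\R P_2(\lambda_0) \ge 0$ by Proposition~\ref{prp:HPproperties}(b), invertibility forces $\R P_2(\lambda_0) \ge cI$ for some $c > 0$. The subtle point is that Lemma~\ref{lem:coupled} requires $\lambda \in \rho(A_2)$, but $\rho(A_2) \cap \C_+$ is automatically non-empty because $A_2$ generates a contraction semigroup, and in fact $\C_+ \subseteq \rho(A_2)$; moreover by Proposition~\ref{prp:HPproperties}(b) one has $\|P_2(1+\lambda_0)\| < \infty$ and the estimate $\|P_2(\lambda)\| \le \|P_2(1+\lambda)\|(2 + \|(\lambda - A_2)^{-1}\|)$, which lets us control $\R P_2(\lambda)$ near $\lambda_0$; alternatively, and more simply, one uses the fact (cited just before Proposition~\ref{prp:swap}, via \cite[Cor.~4.3]{Log20}, and analogously for the real part through the resolvent/transfer-function identities) that invertibility of $\R P_2$ propagates through $\C_+$. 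In any case I would fix some $\lambda \in \C_+$ with $\R P_2(\lambda) \ge cI$ for some $c > 0$ and $\lambda \in \rho(A_2)$.

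With such a $\lambda$ in hand, Lemma~\ref{lem:coupled} reduces the question $\lambda \in \rho(A)$ to invertibility of $S_\lambda = \lambda - L_1$ on $D(S_\lambda) = \Ker(G_1 + J^* P_2(\lambda)JK_1)$. By Proposition~\ref{prp:Q}(a) applied with $V = U_2$, $Q = P_2(\lambda)$ (which satisfies $\R Q \ge cI$), the triple $(G_1 + J^* P_2(\lambda)JK_1, L_1, K_1)$ is an impedance passive boundary node, and hence the restriction of $L_1$ to its kernel generates a contraction semigroup; in particular that kernel equals $D(A_\lambda)$ where $A_\lambda$ is this generator, so $S_\lambda = \lambda - A_\lambda$, and since $\R\lambda > 0$ while the generator is dissipative, $\lambda \in \rho(A_\lambda)$, i.e.\ $S_\lambda$ is invertible. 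Lemma~\ref{lem:coupled} then yields $\lambda \in \rho(A)$, and in particular $\lambda - A$ is surjective. Combined with dissipativity, the Lumer--Phillips theorem \cite[Thm.~II.3.15]{EngNag00book} shows that $A$ generates a contraction semigroup on $X$. The main obstacle is the bookkeeping in the second paragraph — ensuring one genuinely has a single $\lambda$ lying simultaneously in $\rho(A_2)$, in $\C_+$, and with $\R P_2(\lambda)$ coercive — but this is handled either by the half-plane propagation of coercivity or, at worst, by a short perturbation argument using the estimates in Proposition~\ref{prp:HPproperties}(b).
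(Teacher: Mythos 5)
Your proof is correct, but for the key surjectivity step it takes a genuinely different route from the paper. Both arguments establish dissipativity identically and both reduce matters, via Lemma~\ref{lem:coupled}, to the invertibility of $S_\gl=\gl-L_1$ on $\Ker(G_1+J^*P_2(\gl)JK_1)$ for a suitable $\gl\in\C_+$ with $\R P_2(\gl)\ge cI$ (the paper handles the passage from $\overline{\C_+}$ to $\C_+$ exactly as you suggest, by continuity of $\gl\mapsto\R P_2(\gl)$ near the given point; your alternative appeal to \cite[Cor.~4.3]{Log20} is shakier, since that result concerns invertibility of $P_2$ rather than of $\R P_2$, but your perturbation argument suffices and is what the paper uses). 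Where you diverge is in proving that $S_\gl$ is invertible: you invoke Proposition~\ref{prp:Q}(a) with $Q=P_2(\gl)$ to recognise $S_\gl$ as $\gl$ minus the generator of a contraction semigroup, whence $\gl\in\C_+$ lies in its resolvent set immediately. The paper instead uses \cite[Lem.~A.1(a)]{Pau19} to invert $P_2(\gl)$ and then writes down an explicit inverse, $S_\gl\inv=(\gl-A_1)\inv-H_1(\gl)J^*J(J^*P_1(\gl)J+P_2(\gl)\inv)\inv J^*JK_1(\gl-A_1)\inv$, verified directly via Proposition~\ref{prp:HPproperties}(a). Your argument is shorter and arguably cleaner; the paper's explicit formula has the advantage that it is reused in Lemma~\ref{lem:coupledCompact} to read off compactness of $S_\gl\inv$ from compactness of $(\gl-A_1)\inv$ and $J$, which your abstract argument would not yield.
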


\begin{proof}
Let $x=(x_1,x_2)^\top\in D(A).$ By impedance passivity of $(G_1, L_1, K_1)$ and $(G_2, L_2, K_2)$ we have
$$\begin{aligned}
\R\langle Ax,x\rangle_X&=\R\langle L_1x_1,x_1\rangle_{X_1}+\R\langle L_2x_2,x_2\rangle_{X_2}\\
&\le \R\langle G_1x_1,K_1x_1\rangle_{U_1}+\R\langle G_2x_2,K_2x_2\rangle_{U_2}\\
&= \R\langle J^*K_2x_2,K_1x_1\rangle_{U_1}-\R\langle JK_1x_1,K_2x_2\rangle_{U_2}=0,
\end{aligned}$$
so $A$ is dissipative. Let $\gl\in\overline{\C_+}\cap\rho(A_2)$ be such that $\R P_2(\gl) $ is invertible. By continuity of the map $\gl\mapsto \R P_2(\gl) $ on a neighbourhood of $\gl$ we may assume without loss of generality that $\gl\in\C_+$. Then  $\gl\in\rho(A_1)$ and $\R P_j(\gl)\ge0$ for $j=1,2$, by Proposition~\ref{prp:HPproperties}(b). Since $\R P_2(\gl)$ is invertible, it follows that $\R P_2(\gl)\ge cI$ for some $c>0$. Using \cite[Lem.~A.1(a)]{Pau19}, we see that $P_2(\gl)$ is invertible and $\R P_2(\gl)\inv\ge c\|P_2(\gl)\|^{-2}I$. Hence
$$\R(J^*P_1(\gl)J+P_2(\gl)\inv)\ge\R P_2(\gl)\inv\ge c\|P_2(\gl)\|^{-2}I,$$
so $J^*P_1(\gl)J+P_2(\gl)\inv$ is invertible. Define $R_\gl \in\B(X_1)$ by 
$$R_\gl=(\gl-A_1)\inv-H_1(\gl)J^*J(J^*P_1(\gl)J+P_2(\gl)\inv)\inv J^*JK_1(\gl-A_1)\inv.$$
Straightforward computations using Proposition~\ref{prp:HPproperties}(a) show that $\Ran R_\gl\subseteq \Ker(G_1+J^*P_2(\gl)JK_1)=D(S_\gl)$ and that $R_\gl$ is the inverse for $S_\gl$. Hence $S_\gl$ is invertible, and it follows from Lemma~\ref{lem:coupled} that $\gl\in\rho(A)$. In particular, $\gl-A$ is surjective and hence maximally dissipative. By the Lumer--Phillips theorem $A$ generates a contraction semigroup.
\end{proof}

We  now  prove  Theorem~\ref{thm:res}.

\begin{proof}[Proof of Theorem \textup{\ref{thm:res}}]
Let $s\in E$. Since ${is}\in\rho(A_2)$ and $\R P_2({is})$ is invertible by our assumptions, it follows from Proposition~\ref{prp:gen} that $A$ generates a contraction semigroup on $X$. Furthermore, by Lemma~\ref{lem:coupled} we have ${is}\in\rho(A)$ provided the operator $S_{is}\colon D(S_{is})\subseteq X_1\to X_1$ defined by
$S_{is}={is}-L_1$ with domain $D(S_{is})=\Ker(G_1+J^*P_2({is})JK_1)$ is bounded below. Let $x\in D(S_{is})$ and let $y=S_{is} x$. Then $(G_1+J^* P_2(is)JK_1)x=0$, so 
$(G_1+J^*QJK_1)x=J^*(Q-P_2(is))JK_1x$.
 Moreover $(is-L_1)x=y$, so $(is-A_0)\inv y=x-H_0(is)(G_1+J^*QJK_1)x$ by Proposition~\ref{prp:HPproperties}(a), where $H_0$ denotes the transfer function associated with the boundary node $(G_1+J^*QJK_1,L_1,K_1)$. Hence
\begin{equation}\label{eq:x}
x=({is}-A_0)\inv y+H_0({is})J^*(Q-P_2({is}))JK_1x.
\end{equation}
 Applying Proposition~\ref{prp:Q}(a) 
with $P_2(is)$ taking the place of the operator $Q$ appearing there,
 we find that
\begin{equation}\label{eq:real_part}
\R\langle y,x\rangle=-\R\langle L_1x,x\rangle\ge\eta(s)\|JK_1x\|^2,
\end{equation}
and hence $\|JK_1x\|^2\le\eta(s)\inv\|x\|\|y\|$. 
Using 
the formula~\eqref{eq:x}, the estimates  $\|(is-A_0)\inv\|\le M_0(s)$ and $\norm{H_0(is)}\leq N_0(s)$  along with Young's inequality we get
$$\begin{aligned}
\|x\|&\le M_0(s)\|y\|+N_0(s)\|J\|(\norm{Q}+\|P_2({is})\|)\left(\frac{\|x\|\|y\|}{\eta(s)}\right)^{1/2}\\
&\leq \frac{\norm{x}}{2} + \left(M_0(s) + \frac{N_0(s)^2\|J\|^2(\norm{Q}^2+\norm{P_2({is})}^2)}{\eta(s)}  \right)\norm{y}.
\end{aligned}$$
Since $\|P_2({is})\|\ge\eta(s)$ and, by Proposition~\ref{prp:HPproperties}(b), 
$\|P_2({is})\|\lesssim  M_2(s)\|P_2(1+is)\|,$
we deduce that 
$\|x\|\lesssim (M_0(s) + N_0(s)^2M_2(s)^2 \mu(s)\eta(s)\inv)\|S_{is} x\|,$
where the implicit constant is independent of $s$ (as will be the case throughout the remainder of this proof). In particular, the operator $S_{is}$ is bounded below and hence invertible, so that ${is}\in\rho(A)$. Furthermore, 
\begin{equation}\label{eq:TL}
\|S_{is}\inv\|\lesssim M_0(s) + N_0(s)^2M_2(s)^2 \frac{\mu(s)}{\eta(s)},
\end{equation}
 giving an upper bound for the top left entry in the matrix representing $({is}-A)\inv$ in Lemma~\ref{lem:coupled}. 

We now estimate the remaining three entries of $(is-A)\inv$.  Since $s\in E$ is fixed and $\R P_2(is)\geq \eta(s)I$ with $\eta(s)>0$, we have from Proposition~\ref{prp:Q} that 
$(G_1+J^\ast P_2(is)JK_1,L_1,K_1)$
 is an impedance passive boundary node. 
We note that $H_{is}$ coincides with the first transfer function associated with this boundary node evaluated at $is$.
Since $S_{is} $ is the restriction of $is-L_1$ to $\Ker (G_1+J^\ast P_2(is) JK_1)$ and since $S_{is}$ is invertible, the estimates in Proposition~\ref{prp:Q}(c) give
$$
\begin{aligned}
\norm{J K_1 S_{is}\inv }^2 &\leq \eta(s)\inv \norm{S_{is}\inv},\\
\norm{ H_{is} J^\ast}^2 &\leq \eta(s)\inv \norm{S_{is}\inv},\\
\norm{ JK_1H_{is} J^\ast} &\leq \eta(s)\inv .
\end{aligned}
$$
Since Proposition~\ref{prp:HPproperties}(b) implies that $\|H_2({is})\|\lesssim M_2(s)\|P_2(1+is)\|^{1/2}$ and $\|K_2(is-A_2)\inv\|\lesssim M_2(s)\|P_2(1+is)\|^{1/2}$, we may estimate the top right and bottom left entries of $(is-A)\inv $ by
\begin{equation*}\label{eq:TR}
\begin{aligned}
\| H_{is}J^*K_2 (is-A_2)\inv\|
&\lesssim
\frac{M_2(s)\mu(s)^{1/4}}{\eta(s)^{1/2}} \left( M_0(s) + N_0(s)^2M_2(s)^2 \frac{\mu(s)}{\eta(s)} \right)^{1/2}
\\
&\lesssim
\frac{M_2(s)\mu(s)^{1/4}}{\eta(s)^{1/2}}  M_0(s)^{1/2} + N_0(s)M_2(s)^2 \frac{\mu(s)^{3/4}}{\eta(s)} 
\\
&\lesssim
M_0(s) 
 + N_0(s)^2M_2(s)^2 \frac{\mu(s)}{\eta(s)} 
\end{aligned}
\end{equation*}
and, analogously,
\begin{equation*}\label{eq:BL}
\|{H_2({is})JK_1S_{is}\inv}\|\lesssim
M_0(s) 
 + N_0(s)^2M_2(s)^2 \frac{\mu(s)}{\eta(s)}.
\end{equation*}
Finally, since
$$M_2(s)\le M_2(s)\frac{\|P_2(is)\|}{\eta(s)}\lesssim
 M_2(s)^2\frac{\|P_2(1+is)\|}{\eta(s)},$$
 we may bound the bottom right entry of $(is-A)\inv$ by
\begin{equation*}\label{eq:BR}
 M_2(s)+\|H_2(is)\|\|JK_1 H_{is}J^*\|\| K_2 (is-A_2)\inv\|\lesssim M_2(s)^2\frac{\|P_2(1+is)\|}{\eta(s)}.
\end{equation*}
Now \eqref{eq:res_est} follows from the formula for $(is-A)\inv$ given in Lemma~\ref{lem:coupled} together with~\eqref{eq:TL} and our estimates for the remaining three matrix entries.

\end{proof}

\begin{rem}\label{rem:spec}
A variant of Theorem~\ref{thm:res} remains true even without the real part of  $P_2(is)$ being strictly positive for certain $is\in\rho(A_1)\cap\rho(A_2)$. 
This can be seen by using a perturbation formula similar to \eqref{eq:AQ_inv} to analyse $S_{is}\inv$.
Indeed, given $is\in\rho(A_1)\cap\rho(A_2)$ it is possible to show that $is\in\rho(A)$ if and only if $I+J^*P_2(is)JP_1(is)$ is invertible,
and in this case we may obtain a bound for
 $\|(is-A)\inv\|$ in terms of
 $M_2(s)$,
 $\mu(s)$ and the norms of
$(is-A_1)\inv$, $P_1(1+is)$ and  $(I+J^*P_2(is)JP_1(is))\inv$.
\end{rem}

Since Theorem~\ref{thm:res} applies  in cases where $\sigma(A)\cap i\RR$ may be non-empty,  it is sometimes useful to know that $\sigma(A)\cap i\RR$ cannot be too large. This is true in particular whenever $A$ has compact resolvent. The following simple lemma gives natural sufficient conditions for this to be the case.

\begin{lem}
\label{lem:coupledCompact}
Let $(G_1, L_1, K_1)$ and $(G_2, L_2, K_2)$ be two impedance passive boundary nodes on  $(U_1,X_1,U_1)$ and $(U_2,X_2,U_2)$, respectively, and assume that $\R P_2(\gl) $ is invertible for some $\gl\in\rho(A_2)\cap \overline{\C_+}$. If both $A_1 $ and $A_2$ have compact resolvent and if 
$J\in \B(U_1,U_2)$ is compact,
then $A$ has compact resolvent.
\end{lem}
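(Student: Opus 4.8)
The plan is to use the explicit formula for $(\gl-A)\inv$ provided by Lemma~\ref{lem:coupled}. By Proposition~\ref{prp:gen} the hypothesis that $\R P_2(\gl_0)$ is invertible for some $\gl_0\in\rho(A_2)\cap\overline{\C_+}$ already guarantees that $A$ generates a contraction semigroup on $X$, so $\rho(A)\neq\emptyset$; since compactness of the resolvent at one point of $\rho(A)$ is equivalent to compactness at every point of $\rho(A)$ (via the resolvent identity), it suffices to exhibit a single $\gl\in\rho(A)$ for which $(\gl-A)\inv$ is compact. As in the proof of Proposition~\ref{prp:gen}, we may choose $\gl\in\C_+$ close enough to $\gl_0$ so that $\R P_2(\gl)\ge cI$ for some $c>0$, $\gl\in\rho(A_1)\cap\rho(A_2)$, and moreover the operator $S_\gl=\gl-L_1$ with domain $\Ker(G_1+J^*P_2(\gl)JK_1)$ is invertible, whence $\gl\in\rho(A)$ by Lemma~\ref{lem:coupled}.

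First I would show that $S_\gl\inv\in\B(X_1)$ is compact. From the proof of Proposition~\ref{prp:gen} we have the explicit expression
$$S_\gl\inv=(\gl-A_1)\inv-H_1(\gl)J^*J(J^*P_1(\gl)J+P_2(\gl)\inv)\inv J^*JK_1(\gl-A_1)\inv.$$
The first term is compact by hypothesis. In the second term, $(\gl-A_1)\inv$ is compact and all the remaining factors, namely $H_1(\gl)$, $K_1$ (as a bounded operator from $D(L_1)$ to $U_1$), $J$, $J^*$ and $(J^*P_1(\gl)J+P_2(\gl)\inv)\inv$, are bounded; since the composition of a compact operator with bounded operators is compact, $S_\gl\inv$ is compact. (One could alternatively argue more directly using only that $J$ is compact, since $J$ appears in the second term, but invoking compactness of $(\gl-A_1)\inv$ is cleanest.)

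Next I would read off the matrix for $(\gl-A)\inv$ from Lemma~\ref{lem:coupled} and check each of the four entries is compact. The top-left entry is $S_\gl\inv$, already shown compact. The top-right entry $H_\gl J^*K_2(\gl-A_2)\inv$ is a composition involving the compact operator $(\gl-A_2)\inv$ together with the bounded operators $K_2$, $J^*$ and $H_\gl$, hence compact. The bottom-left entry $-H_2(\gl)JK_1S_\gl\inv$ is a composition of bounded operators with the compact operator $S_\gl\inv$, hence compact. Finally the bottom-right entry $(\gl-A_2)\inv-H_2(\gl)JK_1H_\gl J^*K_2(\gl-A_2)\inv$ is a sum of the compact operator $(\gl-A_2)\inv$ with a composition of bounded operators and $(\gl-A_2)\inv$, hence compact. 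Since an operator on $X=X_1\times X_2$ with compact matrix entries is compact, $(\gl-A)\inv$ is compact, and therefore $A$ has compact resolvent.

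I do not anticipate a serious obstacle here: the proof is essentially bookkeeping once the formula from Lemma~\ref{lem:coupled} and the resolvent expression for $S_\gl$ from the proof of Proposition~\ref{prp:gen} are in hand. The only point requiring a small amount of care is the very first reduction — verifying that it is legitimate to check compactness of the resolvent at a single conveniently chosen $\gl\in\C_+$ rather than at a prescribed point — but this follows immediately from the standard fact that $(\gl-A)\inv$ compact for one $\gl\in\rho(A)$ forces it for all $\gl\in\rho(A)$, together with $\rho(A)\neq\emptyset$ as ensured by Proposition~\ref{prp:gen}.
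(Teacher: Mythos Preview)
Your approach is correct and essentially identical to the paper's: both pass to a suitable $\gl\in\C_+$, invoke the explicit formula for $S_\gl\inv$ from the proof of Proposition~\ref{prp:gen} to see it is compact, and then check that each block of the matrix from Lemma~\ref{lem:coupled} is compact.

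One small imprecision worth flagging: for the top-right entry $H_\gl J^*K_2(\gl-A_2)\inv$ (and likewise the second summand of the bottom-right entry), compactness of $(\gl-A_2)\inv$ as a map $X_2\to X_2$ does not by itself force $K_2(\gl-A_2)\inv$ to be compact, because $K_2$ is bounded only as a map $D(L_2)\to U_2$, not $X_2\to U_2$, and $(\gl-A_2)\inv\colon X_2\to D(A_2)$ is an isomorphism rather than a compact map. The clean justification here is that $J^*$ is compact (since $J$ is), so the whole composition is compact regardless. You already note this alternative parenthetically when treating $S_\gl\inv$; it is what is actually needed for these off-diagonal terms, and is the route the paper implicitly takes.
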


\begin{proof}
As in the proof of Proposition~\ref{prp:gen} we may assume without loss of generality  that $\R P_2(\gl)$ is invertible for some $\gl\in\C_+$.
The proof of Proposition~\ref{prp:gen} shows that $S_\gl$ is invertible and that
$$S_\gl\inv = (\gl-A_1)\inv-H_1(\gl)J^*J(J^*P_1(\gl)J+P_2(\gl)\inv)\inv J^*JK_1(\gl-A_1)\inv.$$
 Hence $\gl\in\rho(A)$ and $(\gl-A)\inv$ has the form given in  
Lemma~\ref{lem:coupled}. Since $(\gl-A_1)\inv$ and $J$ are assumed to be compact, the same is true for
 $S_\gl\inv$. Our assumptions further ensure that the remaining operators appearing in the formula in~Lemma~\ref{lem:coupled} are compact,
 so
$(\gl-A)\inv$ too is compact.
\end{proof}

We close this section by analysing a boundary control system coupled with an infinite-dimensional linear system. More specifically, we consider the system
$$
\left\{\begin{aligned}
	  \dot{z}_1(t) &= L_1 z_1(t),  \qquad&t\ge0,\\
	  	  \dot{z}_2(t) &= A_2 z_2(t)-B_2JK_1z_1(t),  \qquad&t\ge0,\\
	  G_1 z_1(t) &=J^*C_2z_2(t)-J^*D_2JK_1z_1(t), &t\ge0,\\
	  z_1(0)&\in X_1,\ z_2(0)\in X_2.
	\end{aligned}\right.
$$
Here $X_1,X_2, U_1$ and $U_2$ are Hilbert spaces, $(G_1,L_1,K_1)$ is an impedance passive boundary node on $(U_1,X_1,U_1)$, $A_2\colon D(A_2)\subseteq X_2\to X_2$ is the generator of a contraction semigroup on $X_2$ and $B_2\in\B(U_2,X_2)$, $C_2\in\B(X_2,U_2)$, $D_2\in\B(U_2)$ and $J\in\B(U_1,U_2)$ are bounded operators. In applications, the linear dynamical system $(A_2,B_2,C_2,D_2)$ might describe an ordinary differential equation, or a PDE with \emph{distributed} input and output. We reformulate this coupled system as an abstract Cauchy problem
\begin{equation*}\label{eq:ACP_2}
\left\{\begin{aligned}
	  \dot{z}(t) &= A z(t),  \qquad t\ge0,\\
	  	  z(0) &= z_0,
	\end{aligned}\right.
\end{equation*}
where $z_0=(z_1(0),z_2(0))^\top\in X$, $X$ is the product Hilbert space $X_1\times X_2$ and $A\colon D(A)\subseteq X\to X$ is the operator
$$A=
\begin{pmatrix}
L_1&0\\-B_2 JK_1&A_2
\end{pmatrix}$$
with domain 
$$D(A)=\bigg\{\begin{pmatrix}x_1\\x_2 
\end{pmatrix}\in D(L_1)\times D(A_2): (G_1+J^*D_2JK_1)x_1=J^*C_2x_2\bigg\}.$$

\begin{thm}\label{thm:coupled}
Given Hilbert spaces $X_1, X_2, U_1$ and $U_1$ let $(G_1,L_1,K_1)$ be an impedance passive boundary node on $(U_1,X_1,U_1)$, let $A_2\colon D(A_2)\subseteq X_2\to X_2$ be a  linear operator such that $\gl-A_2$ is surjective for some $\gl\in\C_+$ and let $B_2\in\B(U_2,X_2)$, $C_2\in\B(X_2,U_2)$ and $D_2\in\B(U_2)$ be such that 
\begin{equation}\label{eq:imped_pass}
\R\<A_2x+B_2u,x\>_{X_2}\le\R\<C_2x+D_2u,u\>_{U_2},\qquad x\in D(A_2),\ u\in U_2.
\end{equation}
Let $J\in\B(U_1,U_2)$, let $Q\in \B(U_2)$ be a self-adjoint operator such that $Q\geq 0$,
 let
$A_0$ denote the restriction of $L_1$ to $\Ker (G_1+J^*QJK_1)$
and let $H_0$ be the transfer function of $(G_1+J^\ast QJ,L_1,K_1)$.
Suppose there exists a non-empty set $E\subseteq\{s\in\RR:is\in \rho(A_0)\cap\rho(A_2)\}$. Furthermore, let
$N_0,M_0,M_2\colon E\to[r,\infty)$, for some $r>0$, be such that 
$\norm{H_0(is)}\leq N_0(s)$, $s\in E$, and
$$\|(is-A_j)\inv\|\le M_j(s),\qquad s\in E,$$
for  $j=0,2$, and suppose there exists a function $\eta\colon E\to(0,\infty)$ such that 
\begin{equation}\label{eq:R_lb}
\R (C_2(is-A_2)\inv B_2+D_2)\ge\eta(s)I,\qquad s\in E.
\end{equation}
Then $A$ generates a contraction semigroup on $X$, $iE\subseteq\rho(A)$ and
\begin{equation}\label{eq:res_est2}
\|(is-A)\inv\|\lesssim
M_0(s)
+ \frac{N_0(s)^2M_2(s)^2}{\eta(s)},
\qquad s\in E.
\end{equation}
\end{thm}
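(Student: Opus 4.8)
The plan is to run exactly the argument behind Lemma~\ref{lem:coupled}, Proposition~\ref{prp:gen} and the proof of Theorem~\ref{thm:res}, with the boundary node $(G_2,L_2,K_2)$ everywhere replaced by the linear system $(A_2,B_2,C_2,D_2)$. Write $\mathbf{G}_2(\gl)=C_2(\gl-A_2)\inv B_2+D_2$ for $\gl\in\rho(A_2)$. Condition~\eqref{eq:imped_pass} is precisely impedance passivity of the (bounded) system node $(A_2,B_2,C_2,D_2)$ in the sense of~\cite[Def.~4.1]{Sta02}: taking $u=0$ shows $A_2$ is dissipative, and since $\gl-A_2$ is surjective for some $\gl\in\C_+$ the Lumer--Phillips theorem gives that $A_2$ generates a contraction semigroup; moreover, for $u\in U_2$ and $\gl\in\C_+$ the choice $x=(\gl-A_2)\inv B_2u$ yields $A_2x+B_2u=\gl x$ and $C_2x+D_2u=\mathbf{G}_2(\gl)u$, so~\eqref{eq:imped_pass} gives $\R\gl\,\|x\|^2\le\R\langle\mathbf{G}_2(\gl)u,u\rangle$ and hence $\R\mathbf{G}_2(\gl)\ge0$, the analogue of Proposition~\ref{prp:HPproperties}(b).

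\textbf{Generation of the semigroup.} First I would show $A$ is dissipative: for $x=(x_1,x_2)^\top\in D(A)$ one combines impedance passivity of $(G_1,L_1,K_1)$, the boundary identity $G_1x_1=J^*(C_2x_2-D_2JK_1x_1)$, and~\eqref{eq:imped_pass} applied with $u=-JK_1x_1$; the two boundary contributions cancel and one is left with $\R\langle Ax,x\rangle\le0$. For maximality, pick $\gl\in\C_+\cap\rho(A_2)$ with $\R\mathbf{G}_2(\gl)\ge cI$ for some $c>0$ (this is possible by continuity of $\gl\mapsto\R\mathbf{G}_2(\gl)$ near $is_0$ for any fixed $s_0\in E$, using $\R\mathbf{G}_2(is_0)\ge\eta(s_0)I$, exactly as in the proof of Proposition~\ref{prp:gen}). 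Eliminating $x_2=(\gl-A_2)\inv(y_2-B_2JK_1x_1)$ from $(\gl-A)(x_1,x_2)^\top=(y_1,y_2)^\top$ reduces that equation to finding $x_1\in D(L_1)$ with
\[
(\gl-L_1)x_1=y_1,\qquad (G_1+J^*\mathbf{G}_2(\gl)JK_1)x_1=J^*C_2(\gl-A_2)\inv y_2 .
\]
By Proposition~\ref{prp:Q}, applied to $(G_1,L_1,K_1)$ with $Q=\mathbf{G}_2(\gl)$ and the given $J$, the triple $(G_1+J^*\mathbf{G}_2(\gl)JK_1,L_1,K_1)$ is an impedance passive boundary node and $\gl\in\rho(A_1)\cap\C_+\subseteq\rho(A_{\mathbf{G}_2(\gl)})$, so the pair above is solvable (via $S_\gl\inv$ and the transfer function of this damped node). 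Hence $\gl-A$ is surjective and $A$ generates a contraction semigroup by Lumer--Phillips.

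\textbf{Resolvent estimate.} The same elimination, now for general $\gl\in\rho(A_2)\cap\overline{\C_+}$ with $\R\mathbf{G}_2(\gl)\ge cI$, shows (as in Lemma~\ref{lem:coupled}) that $\gl\in\rho(A)$ if and only if $S_\gl:=\gl-L_1$ on $\Ker(G_1+J^*\mathbf{G}_2(\gl)JK_1)$ is invertible, and, when it is, $(\gl-A)\inv$ equals the $2\times2$ block matrix of Lemma~\ref{lem:coupled} with $H_2(\gl)$ replaced by $(\gl-A_2)\inv B_2$ and $K_2(\gl-A_2)\inv$ by $C_2(\gl-A_2)\inv$; for $\gl=is$, $S_{is}$ is invertible iff bounded below, by the argument via~\cite[Lem.~2.3]{AreBat88}. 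Now fix $s\in E$ and let $y=S_{is}x$ with $x\in\Ker(G_1+J^*\mathbf{G}_2(is)JK_1)$. Proposition~\ref{prp:Q}(a) with $Q=\mathbf{G}_2(is)$ (using~\eqref{eq:R_lb}) gives $\R\langle y,x\rangle\ge\eta(s)\|JK_1x\|^2$, and Proposition~\ref{prp:HPproperties}(a)(iii) for the node $(G_1+J^*QJK_1,L_1,K_1)$ with generator $A_0$ gives $x=(is-A_0)\inv y+H_0(is)J^*(Q-\mathbf{G}_2(is))JK_1x$. Since $\|\mathbf{G}_2(is)\|\le\|C_2\|\|B_2\|M_2(s)+\|D_2\|\lesssim M_2(s)$ and $\|Q\|\lesssim1\lesssim M_2(s)$, Young's inequality lets one absorb $\tfrac12\|x\|$ and conclude $\|S_{is}\inv\|\lesssim M_0(s)+N_0(s)^2M_2(s)^2/\eta(s)$; in particular $S_{is}$ is bounded below, hence invertible, so $is\in\rho(A)$. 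The remaining three block entries are bounded exactly as in the proof of Theorem~\ref{thm:res}: one uses Proposition~\ref{prp:Q}(c) for $(G_1+J^*\mathbf{G}_2(is)JK_1,L_1,K_1)$ together with $\|(is-A_2)\inv B_2\|\lesssim M_2(s)$, $\|C_2(is-A_2)\inv\|\lesssim M_2(s)$, and $M_2(s)\le M_2(s)\|\mathbf{G}_2(is)\|/\eta(s)\lesssim M_2(s)^2/\eta(s)$ (the last step using $\|\mathbf{G}_2(is)\|\ge\eta(s)$), which yields~\eqref{eq:res_est2}.

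\textbf{Main obstacle.} I do not expect a genuinely new difficulty; the work lies in checking that every step of Lemma~\ref{lem:coupled}, Proposition~\ref{prp:gen} and the proof of Theorem~\ref{thm:res} survives the substitution of $(G_2,L_2,K_2)$ by $(A_2,B_2,C_2,D_2)$. The delicate points to verify are that~\eqref{eq:imped_pass} genuinely plays the role previously played by impedance passivity of the second boundary node (both in the dissipativity computation and in the estimate $\R\mathbf{G}_2(\gl)\ge0$), and that the identities $G_2H_2(\gl)=I$, $K_2H_2(\gl)=P_2(\gl)$, $G_2(\gl-A_2)\inv=0$ used throughout become the trivial identities for the bounded operators $B_2,C_2,D_2$. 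The one simplification worth flagging is that the factor $\mu(s)=1+\|P_2(1+is)\|^2$ appearing in~\eqref{eq:res_est} disappears here: because $1+is\in\C_+$ and $A_2$ generates a contraction semigroup we have $\|(1+is-A_2)\inv\|\le1$, so $\|\mathbf{G}_2(1+is)\|\le\|C_2\|\|B_2\|+\|D_2\|$ is bounded uniformly in $s$ and is absorbed into the implied constant in~\eqref{eq:res_est2}.
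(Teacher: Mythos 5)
Your proposal is correct and follows essentially the same route as the paper's proof: establish dissipativity of $A$ directly from~\eqref{eq:imped_pass} and the coupling condition, define $P_2(\gl)=C_2(\gl-A_2)\inv B_2+D_2$ and $H_2(\gl)=(\gl-A_2)\inv B_2$, rerun the elimination argument of Lemma~\ref{lem:coupled} and the lower-bound argument for $S_{is}$ from the proof of Theorem~\ref{thm:res}, and observe that boundedness of $B_2$, $C_2$, $D_2$ together with $\sup_{s\in\RR}\|(1+is-A_2)\inv\|<\infty$ removes the factor $\mu(s)$. The delicate points you flag (the role of~\eqref{eq:imped_pass} as impedance passivity of the system $(A_2,B_2,C_2,D_2)$, and the trivialisation of the identities involving $G_2$ and $H_2$) are exactly the ones the paper's proof relies on.
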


\begin{proof}
We begin by noting that \eqref{eq:imped_pass} applied with $u=0$ shows that $A_2$ is dissipative. Since $\gl-A_2$ is assumed to be surjective for some $\gl\in\C_+$ it follows from the Lumer--Phillips theorem that $A_2$ generates a contraction semigroup on $X_2$.
Let $x=(x_1,x_2)^\top\in D(A)$. Then $G_1x_1=J^*C_2x_2-J^*D_2JK_1x_1$ and hence
$$\begin{aligned}
\R\<Ax,x\>_X&=\R\<L_1x_1,x_1\>_{X_1}+\R\<A_2x_2-B_2JK_1x_1,x_2\>_{X_2}\\
&\le \R\<G_1x_1,K_1x_1\>_{U_1}+\R \<C_2x_2 - D_2JK_1x_1, -JK_1x_1\>_{U_2}=0.
\end{aligned}$$
It follows that $A$ is dissipative. Define $P_2\colon\rho(A_2)\to\B(U_2)$ and $H_2\colon\rho(A_2)\to\B(U_2,X_2)$ by $P_2(\gl)=C_2(\gl-A_2)\inv B_2+D_2$ and $H_2(\gl)=(\gl-A_2)\inv B_2$, respectively, for all $\gl\in\rho(A_2)$. Moreover,  define $S_\gl\colon D(S_\gl)\subseteq X_1\to X_1$ by 
$$S_\gl=\gl-L_1,\qquad D(S_\gl)=\Ker(G_1+J^*P_2(\gl)JK_1)$$
for all $\gl\in\rho(A_2)$.  As in the proof of Lemma~\ref{lem:coupled} we may show that if $\gl\in\rho(A_2)\cap\overline{\C_+}$ is such that 
$\R P_2(\gl)\ge cI$ for some $c>0$ and 
$S_\gl$ is invertible, then $\gl\in\rho(A)$. Moreover, there exists $H_\gl\in\B(U_1,X_1)$ such that $\Ran H_\gl\subseteq \Ker(\gl-L_1)$, $(G_1+J^*P_2(\gl) JK_1)H_\gl=I$
and the inverse of $\gl-A$ is given by 
$$\begin{pmatrix}
S_\gl\inv & H_\gl J^*C_2 (\gl-A_2)\inv\\
-H_2(\gl)JK_1 S_\gl\inv & (\gl-A_2)\inv -H_2(\gl)JK_1 H_\gl J^*C_2 (\gl-A_2)\inv 
\end{pmatrix}.$$
Finally, if $\gl\in i \RR$ then $S_\gl$ is invertible provided it is bounded below. We note that $\|P_2(is)\|\lesssim M_2(s)$ for $s\in E$ and, since $A_2$ generates a contraction semigroup on $X_2$, we have $\sup_{s\in\RR}\|(1+is-A_2)\inv\|<\infty$. Now let $s\in E$, and let $\gl=is$. Then $\gl\in\rho(A_2)\cap\overline{\C_+}$ by definition of $E$ and  $\R P_2(\gl)\geq \eta(s) I$ by~\eqref{eq:R_lb}. Since the operators $B_2$, $C_2$ and $D_2$ are bounded, we may show as in the proof of Theorem~\ref{thm:res} that $S_\gl$ is bounded below and therefore invertible. Thus $\gl\in\rho(A)$, so $A$ is maximally dissipative and hence generates a contraction semigroup on $X$ by the Lumer--Phillips theorem. 
The estimate $ \|(is-A)\inv\| \lesssim M_0(s) + N_0(s)^2 M_2(s)^2\eta(s)\inv $ for $s\in E$ follows more or less exactly as in the proof of Theorem~\ref{thm:res}, using the fact that the structure of $(\gl-A)\inv$ is analogous to that of the matrix appearing in Lemma~\ref{lem:coupled}.
\end{proof}

\begin{rem}
 In the terminology of~\cite[Def.~4.1]{Sta02}, condition~\eqref{eq:imped_pass} means that the  system~$(A_2,B_2,C_2,D_2)$ is \emph{impedance passive}. It follows in particular that $\R (C_2(\gl-A_2)^{-1}B_2+D_2)\ge0$ for all $\gl\in\rho(A_2) \cap\overline{\C_+}$.
One important case in which condition~\eqref{eq:imped_pass} is satisfied is if $A_2$ is assumed to be the generator of a contraction semigroup on $X_2$, $B_2=C_2^*$ and $D_2=0$ (or, more generally, $\R D_2\ge0$).
\end{rem}

\begin{rem}\phantomsection\label{rem:dyn}
The simplifications described in Remark~\ref{rem:bdd} also apply in the context of Theorem~\ref{thm:coupled}, and in particular 
we have 
 $N_0(s)^2\lesssim M_0(s)$, $s\in E$,
provided that $Q\geq cI$ for some $c>0$ (in particular, if $Q=I$).
Furthermore,  Remark~\ref{rem:spec} applies also in the setting of Theorem~\ref{thm:coupled}. In particular, given $is\in\rho(A_1)\cap\rho(A_2)$ it is possible to show that $is\in\rho(A)$ if and only if $I+J^*P_2(is)JP_1(is)$ is invertible, where $P_2(is)=C_2(is-A_2)\inv B_2+D_2$.
\end{rem}

\begin{rem}\phantomsection\label{rem:Riesz_dyn}
We point out that, as in Remark~\ref{rem:Riesz_res}, Theorem~\ref{thm:coupled} extends to the case where the codomains of $G_1$ and $K_1$ are not the same but merely each other's (conjugate) dual spaces, as described in Remark~\ref{rem:Riesz}.
\end{rem}

\section{Stability of coupled PDE models}\label{sec:appl}

In this section we shall apply the results of Section~\ref{sec:coupled} in order to study the quantitative asymptotic behaviour of several concrete examples of coupled PDE models.  We approach such systems by first reformulating them as an abstract Cauchy problem of the form
\begin{equation}\label{eq:ACP2}
\left\{\begin{aligned}
	  \dot{z}(t) &= A z(t),  \qquad t\ge0,\\
	  	  z(0) &=z_0 \in X,
	\end{aligned}\right.
\end{equation}
where $A\colon D(A)\subseteq X\to X$ generates a contraction semigroup $\T$ on some appropriately chosen Hilbert space $X$. In this case the orbit $T(\cdot)z_0$ corresponding to the initial data $z_0\in X$ is a  \emph{(mild) solution} of the abstract Cauchy problem~\eqref{eq:ACP2}, and it is a \emph{classical solution} of~\eqref{eq:ACP2} precisely when $z_0\in D(A)$. We refer the reader to~\cite[Ch.~3]{AreBat11book} for further details on abstract Cauchy problems and $C_0$-semigroups.

A $C_0$-semigroup $\T$ on a Banach space $X$ is said to be \emph{strongly stable} if $\|T(t)x\|\to0$ as $t\to\infty$ for all $x\in X$, and the semigroup $\T$ is said to be \emph{uniformly exponentially stable} if there exist constants $M,\omega>0$ such that $\|T(t)\|\le Me^{-\omega t}$ for all $t\ge0$.  Theorems~\ref{thm:res} and \ref{thm:coupled} may be used to give sufficient conditions for classes of the abstract coupled systems we have studied to lead to an exponentially stable semigroup and hence give a uniform decay rate for all solutions; see in particular Remark~\ref{rem:bdd}. However, in many cases of interest the resolvent is not uniformly bounded along the imaginary axis, and in this case one cannot hope to obtain a decay rate that is uniform for \emph{all}  solutions. Nevertheless, it is possible to obtain rates of decay for all \emph{classical} solutions of the abstract Cauchy problem provided one is able to estimate the growth of the resolvent of the imaginary axis. We now formulate a general result which will allow us to convert the resolvent estimates obtained in Theorems~\ref{thm:res} and \ref{thm:coupled} into decay rates for suitable semigroup orbits. Recall to this end that a measurable function $M\colon \RR_+\to(0,\infty)$ is said to have \emph{positive increase} if there exist constants $c\in(0,1]$ and $\alpha,s_0>0$  such that 
$$\frac{M(\gl s)}{M(s)}\ge c\gl^\alpha,\qquad \gl\ge1,\ s\ge s_0.$$
This class of functions includes all \emph{regularly varying} functions, and in particular functions given by  $M(s)=Cs^\alpha\log(s)^\beta$ for $s\ge s_0$, where $C,\alpha>0,$  $s_0>1$  and $\beta\in\RR$; see \cite[Sect.~2]{BaChTo16} and \cite[Sect.~2]{RoSeSt19} for further details. Given a continuous non-increasing function $M\colon\RR_+\to(0,\infty)$ we denote by $M\inv$ the maximal right-inverse of $M$, defined by $M\inv(t)=\sup\{s\ge0:M(s)\le t\}$ for $t\ge M(0)$.

\begin{thm}\label{thm:sg}
Let $\T$ be a bounded $C_0$-semigroup on a Hilbert space $X$, with generator $A$, and suppose that $i\RR\subseteq\rho(A)$. Then the following hold:
\begin{enumerate}
\item[\textup{(a)}] The semigroup $\T$ is strongly stable.
\item[\textup{(b)}]  The semigroup $\T$ is uniformly exponentially stable if and only if $\sup_{s\in\RR}\|(is-A)\inv\|<\infty$.
\item[\textup{(c)}] Suppose that $M\colon[0,\infty)\to(0,\infty)$ is a continuous function of positive increase such that $\|(is-A)\inv\|\lesssim M(|s|)$ for all $s\in\RR$. Then 
$$\|T(t)z_0\|=O\bigg(\frac{1}{M\inv(t)}\bigg),\qquad t\to\infty,$$
for all $z_0\in D(A)$. Furthermore,  if there exists $\alpha>0$ such that $\|(is-A)\inv\|\lesssim 1+|s|^\alpha$ for all $s\in\RR$ then $\|T(t)z_0\|=o(t^{-1/\alpha})$ as $t\to\infty$ for all $z_0\in D(A)$.
\end{enumerate}
\end{thm}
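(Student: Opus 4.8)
The plan is to deduce each of the three parts directly from a classical result in the asymptotic theory of $C_0$-semigroups, so that the only real work is to check that our standing hypotheses --- $\T$ bounded, $i\RR\subseteq\rho(A)$, and, for part~(c), the resolvent bound $\|(is-A)\inv\|\lesssim M(|s|)$ --- match those of the theorem being quoted.

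For part~(a): since $i\RR\subseteq\rho(A)$ we have $\sigma(A)\cap i\RR=\emptyset$, which is in particular countable, and moreover $\sigma(A^*)=\{\overline{\gl}:\gl\in\sigma(A)\}$ is disjoint from $i\RR$ as well, so $A^*$ has no eigenvalues on the imaginary axis. The Arendt--Batty--Lyubich--V\~u theorem (see~\cite{AreBat88}, and also~\cite{AreBat11book}) then applies to the bounded semigroup $\T$ and yields strong stability. For part~(b): this is precisely the Hilbert-space form of the Gearhart--Pr\"uss theorem (see~\cite{EngNag00book} or~\cite{AreBat11book}), which states that a bounded $C_0$-semigroup on a Hilbert space is uniformly exponentially stable if and only if $i\RR\subseteq\rho(A)$ and $\sup_{s\in\RR}\|(is-A)\inv\|<\infty$; as $i\RR\subseteq\rho(A)$ is part of the hypotheses, the asserted equivalence is immediate.

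For part~(c): the hypotheses --- $\T$ bounded, $i\RR\subseteq\rho(A)$, and $\|(is-A)\inv\|\lesssim M(|s|)$ with $M$ continuous and of positive increase --- are exactly those of the sharp quantified Tauberian theorem of Batty--Chill--Tomilov and Rozendaal--Seifert--Stahn (\cite{BaChTo16, RoSeSt19}), which improves on the general Batty--Duyckaerts estimate by removing its logarithmic correction and gives $\|T(t)A\inv\|=O(1/M\inv(t))$ as $t\to\infty$. Since $0\in\rho(A)$ we have $A\inv\in\B(X,D(A))$, and writing $z_0=A\inv(Az_0)$ for $z_0\in D(A)$ gives $\|T(t)z_0\|\le\|T(t)A\inv\|\,\|Az_0\|=O(1/M\inv(t))$, which is the first assertion. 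For the final assertion I would apply this with $M(s)=1+s^\alpha$; this function is continuous and of positive increase, and its maximal right-inverse satisfies $M\inv(t)\asymp t^{1/\alpha}$ as $t\to\infty$, so the first part already gives the uniform bound $\|T(t)A\inv\|=O(t^{-1/\alpha})$. This uniform rate is in general not of class $o(t^{-1/\alpha})$, but it upgrades to $\|T(t)z_0\|=o(t^{-1/\alpha})$ for every \emph{individual} $z_0\in D(A)$: using $\|T(t)A^{-2}\|\le\|T(t/2)A\inv\|^2=O(t^{-2/\alpha})$ together with the density of $D(A^2)$ in $D(A)$, one splits $z_0=z_0'+z_0''$ with $z_0'\in D(A^2)$ and $\|z_0''\|_{D(A)}$ arbitrarily small, and lets $t\to\infty$; alternatively this $o$-refinement is recorded in~\cite{BorTom10} and in the survey~\cite{ChiPau23}.

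I do not anticipate any serious obstacle, since every ingredient is an off-the-shelf result; the points that require a little care are verifying that $M(s)=1+s^\alpha$ genuinely has positive increase (so that the sharp rate, rather than the weaker Batty--Duyckaerts rate with its logarithmic loss, is available), noting $1/M\inv(t)\asymp t^{-1/\alpha}$ for this $M$, and carrying out the density argument that converts the uniform $O(t^{-1/\alpha})$ bound into the $o(t^{-1/\alpha})$ decay of individual classical solutions.
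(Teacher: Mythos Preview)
Your proposal is correct and follows essentially the same route as the paper: parts~(a) and~(b) are deduced from the Arendt--Batty--Lyubich--V\~u and Gearhart--Pr\"uss theorems respectively, and for part~(c) the paper likewise invokes~\cite{RoSeSt19} to obtain $\|T(t)A\inv\|=O(1/M\inv(t))$ and then passes to $z_0\in D(A)$ via surjectivity of $A\inv$. The only difference is that for the final $o(t^{-1/\alpha})$ statement the paper simply cites~\cite[Thm.~2.4]{BorTom10} directly, whereas you additionally sketch the standard density argument via $D(A^2)$; both are fine.
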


\begin{proof}
Part~(a) is a special case of the theorem of Arendt--Batty and Lyubich--V\~{u} \cite[Thm.~V.2.21]{EngNag00book}, while part~(b) follows from the Gearhart--Prüss theorem \cite[Thm.~V.1.11]{EngNag00book}. For~(c), we note that $\|T(t)A\inv\|=O(M\inv(t)\inv)$ as $t\to\infty$ by~\cite[Prop.~2.2 \& Thm.~3.2]{RoSeSt19}, so the first statement follows from the fact that   $A\inv$ maps onto $D(A)$. The final statement is proved in~\cite[Thm.~2.4]{BorTom10}.
\end{proof}

\begin{rem}\label{rem:asymp}
In part~(c) it is also possible to pass, conversely, from a decay rate for classical solutions to a growth estimate for the resolvent of the semigroup generator along the imaginary axis. Indeed, suppose that $A$ generates a bounded $C_0$-semigroup on a Banach space $X$ and that $m\colon \RR_+\to(0,\infty)$ is a continuous non-increasing function such that $m(t)\to0$ as $t\to\infty$ and $\|T(t)(\gl-A)\inv\|\le m(t)$ for some $\gl\in\rho(A)$ and all $t\ge0$. Then $ i\RR\subseteq\rho(A)$ and
$$\|(is-A)\inv\|=O\bigg(m\inv\bigg(\frac{c}{|s|}\bigg)\bigg),\qquad |s|\to\infty,$$
for every $c\in(0,1)$. Here we take $m\inv$ to be the minimal right-inverse of $m$, defined by $m\inv(s)=\inf\{t\ge0:m(t)\ge s\}$ for $0< s\le m(0)$.
For a proof of this result we refer the reader to \cite[Prop.~1.3]{BatDuy08} and~\cite[Thm.~4.4.14]{AreBat11book}. A recent  survey of results in the asymptotic theory of $C_0$-semigroups more generally may be found in~\cite{ChSeTo20}.
\end{rem}

Combining Theorem~\ref{thm:sg} with Theorems~\ref{thm:res} and~\ref{thm:coupled} we obtain the following result describing the asymptotic behaviour of abstract coupled systems considered in Section~\ref{sec:coupled}. 

\begin{cor}\label{cor:decay}
Consider the setting of either Theorem~\textup{\ref{thm:res}}  or Theorem~\textup{\ref{thm:coupled}} with $E=\RR$, and let $M\colon[0,\infty)\to(0,\infty)$ be such that  
$$
M_0(s)+N_0(s)^2M_2(s)^2\frac{\mu(s)}{\eta(s)}\lesssim M(|s|),\qquad s\in\RR,
$$
where we set $\mu(s)=1$ for all $s\in\RR$ in the setting of Theorem~\textup{\ref{thm:coupled}}. Then the operator $A$ generates a contraction semigroup $\T$, and the following hold:
\begin{enumerate}
\item[\textup{(a)}]  The semigroup  $\T$  is strongly stable.
\item[\textup{(b)}] If $M$ may be taken to be bounded,
 then  $\T$ is uniformly exponentially stable. 
\item[\textup{(c)}]If $M$ is continuous and has positive increase, then
$$\|T(t)z_0\|=O\bigg(\frac{1}{M\inv(t)}\bigg),\qquad t\to\infty,$$
for all $z_0\in D(A)$. Furthermore,  if there exists $\alpha>0$  and $s_0\ge0$ such that $M(s)=1+s^\alpha$ for all $s\ge s_0$ then $\|T(t)z_0\|=o(t^{-1/\alpha})$ as $t\to\infty$ for all $z_0\in D(A)$.
\end{enumerate}
\end{cor}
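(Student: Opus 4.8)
The plan is to deduce Corollary~\ref{cor:decay} by feeding the resolvent estimates of Theorems~\ref{thm:res} and~\ref{thm:coupled} into the abstract semigroup results assembled in Theorem~\ref{thm:sg}.

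First I would record the output of Theorems~\ref{thm:res} and~\ref{thm:coupled} in the case $E=\RR$: the operator $A$ generates a contraction semigroup $\T$ on $X$ — in particular $\T$ is a bounded $C_0$-semigroup — and $i\RR\subseteq\rho(A)$ with
$$\|(is-A)\inv\|\lesssim M_0(s)+N_0(s)^2M_2(s)^2\frac{\mu(s)}{\eta(s)},\qquad s\in\RR,$$
where $\mu(s)=1+\|P_2(1+is)\|^2$ in the setting of Theorem~\ref{thm:res} and $\mu\equiv1$ in the setting of Theorem~\ref{thm:coupled}. Combining this with the standing hypothesis $M_0(s)+N_0(s)^2M_2(s)^2\mu(s)/\eta(s)\lesssim M(|s|)$ yields $\|(is-A)\inv\|\lesssim M(|s|)$ for all $s\in\RR$, so $A$ and $\T$ satisfy all the hypotheses required to invoke Theorem~\ref{thm:sg}.

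Part~(a) is then immediate from Theorem~\ref{thm:sg}(a). For part~(b), if $M$ may be taken bounded then the displayed estimate gives $\sup_{s\in\RR}\|(is-A)\inv\|<\infty$, and Theorem~\ref{thm:sg}(b) (Gearhart--Prüss) delivers uniform exponential stability. For the first assertion in part~(c), the function $M$ is by assumption continuous and of positive increase and satisfies $\|(is-A)\inv\|\lesssim M(|s|)$, so Theorem~\ref{thm:sg}(c) applies directly and gives $\|T(t)z_0\|=O(1/M\inv(t))$ as $t\to\infty$ for every $z_0\in D(A)$. For the final assertion, if $M(s)=1+s^\alpha$ for all $s\ge s_0$ then, using that $M$ is continuous and hence bounded on the compact interval $[0,s_0]$, one obtains $\|(is-A)\inv\|\lesssim M(|s|)\lesssim 1+|s|^\alpha$ for all $s\in\RR$; the corresponding statement in Theorem~\ref{thm:sg}(c) (which rests on \cite[Thm.~2.4]{BorTom10}) then yields $\|T(t)z_0\|=o(t^{-1/\alpha})$ as $t\to\infty$ for all $z_0\in D(A)$.

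Since every step is a direct appeal to a previously established result, there is no genuine obstacle here. The only points meriting a line of care are the passage in the last part of~(c) from the estimate valid for $|s|$ large to one valid for all $s\in\RR$, which uses continuity of $M$ near the origin, and the (elementary) observation that $s\mapsto 1+s^\alpha$ is continuous and of positive increase, so that both halves of Theorem~\ref{thm:sg}(c) are indeed applicable.
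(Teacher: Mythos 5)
Your proposal is correct and is exactly the argument the paper intends: the corollary is stated as the combination of the resolvent estimates from Theorems~\ref{thm:res} and~\ref{thm:coupled} (with $E=\RR$) with the abstract stability results of Theorem~\ref{thm:sg}, which is precisely what you do. The small points you flag (boundedness of $M$ near the origin and positive increase of $s\mapsto 1+s^\alpha$) are handled appropriately.
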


 In the remainder of this section, we use our abstract results in order to study energy decay in several concrete PDE models. We reformulate each of these systems as an abstract Cauchy problem~\eqref{eq:ACP2} in such a way that the energy $E(t)$ of a solution with initial data $z_0\in X$, when defined in a natural way, is proportional to $\|T(t)z_0\|^2$ for all $t\ge0$.  

\subsection{A one-dimensional wave-heat system}\label{sec:wave_heat} We begin by considering the coupled wave-heat system on a one-dimensional spatial domain, namely 
\begin{equation}\label{eq:wave-heat}
\left\{
\begin{aligned}
y_{tt}(x,t)&=y_{xx}(x,t),\qquad & x\in(-1,0),\ t>0,\\
w_t(x,t)&=w_{xx}(x,t),& x\in(0,1),\ t>0,\\
y_x(-1,0)&=0,\quad w(1,t)=0 &t>0,\\
y_t(0,t)&=w(0,t),\quad y_x(0,t)=w_x(0,t), &t>0,
\end{aligned}
\right.
\end{equation}
subject to suitable initial conditions. Let $X_1=L^2(-1,0)\times L^2(-1,0)$ and $X_2=L^2(0,1)$, both endowed with their natural Hilbert space norms. Moreover, let $z_1(t)=(u(\cdot,t),v(\cdot,t))^\top$ for $t\ge0$, where $u=y_x$ and $v=y_t$, and let $z_2(t)=w(\cdot,t)$ for $t\ge0$. Given initial data $z_0=(z_1(0),z_2(0))^\top\in X=X_1\times X_2$, we wish to study the asymptotic behaviour as $t\to\infty$ of the total \emph{energy} $E(t)=E_{W}(t)+E_{H}(t)$ associated with the solution of the wave-heat system, where
$$E_{W}(t)=\frac12\int_{-1}^0 |y_x(x,t)|^2+|y_t(x,t)|^2\,\dd x,\qquad t\ge0,$$ 
and
$$E_{H}(t)=\frac12\int_0^1 |w(x,t)|^2\,\dd x,\qquad t\ge0,$$ 
are the energies associated with the wave part and the heat part, respectively. 
In order to apply our abstract results we will formulate the wave-heat system as an abstract Cauchy problem in which the norm of an orbit corresponds to the energy of the associated solution of~\eqref{eq:wave-heat}. 
We describe the generator of this abstract Cauchy problem using two impedance passive boundary nodes $(G_1,L_1,K_1)$ and $(G_2,L_2,K_2)$ for the wave part and the heat part, respectively, which are interconnected in a way that captures the boundary coupling in~\eqref{eq:wave-heat}.

For $j=1,2$, we define $L_j\colon D(L_j)\subseteq X_j\to X_j$ and $G_j, K_j\colon D(L_j)\subseteq X_j\to\C$ by 
$$L_1\begin{pmatrix}u\\v
\end{pmatrix}
=
\begin{pmatrix}v'\\u'
\end{pmatrix},
\qquad 
G_1\begin{pmatrix}u\\v
\end{pmatrix}
=v(0),\qquad 
K_1\begin{pmatrix}u\\v
\end{pmatrix}
=u(0)$$
for all $(u,v)^\top\in D(L_1)=\{(u,v)^\top\in H^1(-1,0)\times H^1(-1,0):u(-1)=0\},$ and 
$$L_2w=w''
\qquad 
G_2w
=-w'(0),\qquad 
K_2w=w(0)$$
for all $w\in D(L_2)=\{w\in H^2(0,1):w(1)=0\}.$ Then our wave-heat system~\eqref{eq:wave-heat} becomes an instance of the abstract coupled system~\eqref{eq:coupled_sys}. We verify that $(G_j,L_j,K_j)$ is an impedance passive boundary node on $(\C,X_j,\C)$ for $j=1,2$. If $x=(u,v)^\top\in D(L_1)$ then 
\begin{equation*}\label{eq:wh_ibp}
\R\<L_1x,x\>_{X_1}=\R\big(\<v',u\>_{L^2}+\<u',v\>_{L^2}\big)=\R u(0)\overline{v(0)}=\R\<G_1x,K_1x\>_{\C},
\end{equation*}
using integration by parts and the fact that $u(-1)=0$. It follows that the restriction $A_1$ of $L_1$ to $\Ker G_1$ is dissipative. Furthermore, by the Rellich--Kondrachov theorem the domain $D(A_1)=\Ker G_1$ of $A_1$ is compactly embedded in $X_1$, so $A_1$ has compact resolvent and in particular must be maximally dissipative, so $A_1$ generates a contraction semigroup by the Lumer--Phillips theorem. The operator $G_1$ is non-zero and therefore maps onto its codomain $\C$. Hence there exists $x_0\in D(L_1)$ such that $G_1x_0=1$, and we may define a right-inverse $G_1^r\in\B(\C,D(L_1))$ of $G_1$ by $G_1^r\gl=\gl x_0$ for all $\gl\in\C$. It follows that $(G_1,L_1,K_1)$ is a boundary node on $(\C,X_1,\C)$, and by our previous calculation using integration by parts it is impedance passive. Next we observe that the restriction $A_2$ of $L_2$ to $\Ker G_2$ is a negative self-adjoint operator and in particular generates a contraction semigroup by the Lumer--Phillips theorem. As before, the operator $G_2$ is non-zero and hence maps onto its codomain $\C$, so it has a right-inverse $G_2^r\in\B(\C,D(L_2))$. It follows that $(G_2,L_2,K_2)$ is a boundary node on $(\C,X_2,\C)$. Finally, if $w\in D(L_2)$ then using integration by parts and the fact that $w(1)=0$ we find that
$$\R\<L_2w,w\>_{X_2}=-\R w'(0)\overline{w(0)}-\|w\|_{L^2}^2\le\R\<G_2w,K_2w\>_{\C},$$
so the boundary node is impedance passive, as required. 

In order to apply Theorem~\ref{thm:res} with $J=1$ and $Q=1$ we first note that the semigroup on $X_1$ generated by the restriction $A_0$ of $L_1$ to $\Ker(G_1+K_1)$ corresponds to the wave equation on the interval $(-1,0)$ with a non-reflective boundary condition at $x=0$. This semigroup is nilpotent and in particular uniformly exponentially stable, so we have $i\RR\subseteq\rho(A_0)$ and $\sup_{s\in\RR}\|(is-A_0)\inv\|<\infty$. It follows that we may take the function $M_0$ to be constant. Since $A_2$ is negative and invertible, we have $i\RR\subseteq\rho(A_2)$ and $\sup_{s\in\RR}\|(is-A_2)\inv\|<\infty$. Hence we may also take $M_2$ to be constant. The transfer function $P_2$ of the boundary node $(G_2, L_2, K_2)$ satisfies $P_2(\gl)=w(0)$ for $\gl\in\overline{\C_+}$, where $w$ is the solution of the problem $\gl w- w''=0$ on $(0,1)$ subject to the boundary conditions $-w'(0)=1$ and $w(1)=0$. Thus
$$P_2(\gl)=\frac{\tanh\sqrt{\gl}}{\sqrt{\gl}},\qquad \gl\in\overline{\C_+}\setminus\{0\},$$
and $P_2(0)=1$. Here we take  the complex logarithm to have a branch cut along the negative real axis, so that the arguments of complex numbers in $\C\setminus(-\infty,0]$ lie in the interval $(-\pi,\pi)$. A simple  calculation shows that
$$\R P_2(is)=\frac{\sin \sqrt{2|s|}+\sinh \sqrt{2|s|}}{\sqrt{2|s|}\big(\cos \sqrt{2|s|}+\cosh \sqrt{2|s|}\big)},\qquad s\in\RR\setminus\{0\}.$$
Using these formulas we see that $\sup_{s\in\RR}\|P_2(1+is)\|<\infty$ and $\R P_2(is)\gtrsim(1+|s|)^{-1/2}$ for all $s\in\RR$. 
We may therefore apply Corollary~\ref{cor:decay}, taking $M_0$, $M_2$ and $\mu$ to be constant, and $\eta(s)=c(1+|s|)^{-1/2}$ for some $c>0$ and all $s\in\RR$, allowing us to set $M(s)=1+s^{1/2}$ for $s\ge0$. We conclude that the operator $A=\diag(L_1,L_2)$ with domain 
$$\begin{aligned}
D(A)=\big\{(u,v,w)^\top\in\ &H^1(-1,0)\times H^1(-1,0)\times H^2(0,1): \\&u(-1)=w(1)=0,\ v(0)=w(0),\ u(0)=w'(0)\big\},
\end{aligned}$$
which describes the wave-heat system~\eqref{eq:wave-heat} 
generates a strongly stable contaction semigroup $\T$ on $X$.
Corollary~\ref{cor:decay} furthermore shows that $\|T(t)z_0\|=o(t^{-2})$ as $t\to\infty$ for all $z_0\in D(A)$. By our choice of the norm on the space $X$ the energy of the (mild) solution of~\eqref{eq:wave-heat} with initial data $z_0\in X$ is given by $E(t)=\frac12\|T(t)z_0\|^2_X$ for all $t\ge0$, so we obtain the following result concerning energy decay in the wave-heat system~\eqref{eq:wave-heat}.

\begin{thm}\label{thm:wave-heat}
The energy of every  solution of the wave-heat system~\eqref{eq:wave-heat} satisfies $E(t)\to0$ as $t\to\infty$, and for classical solutions we have $E(t)=o(t^{-4})$ as $t\to\infty$.
\end{thm}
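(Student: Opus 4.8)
The plan is to assemble the ingredients established in the discussion preceding the statement and feed them into Corollary~\ref{cor:decay}. First I would recall that, with $L_j,G_j,K_j$ defined as above and with $J=1$, the wave-heat system~\eqref{eq:wave-heat} is precisely an instance of the abstract coupled system~\eqref{eq:coupled_sys}, that $(G_j,L_j,K_j)$ is an impedance passive boundary node on $(\C,X_j,\C)$ for $j=1,2$, and that the associated operator $A=\diag(L_1,L_2)$ on $X=X_1\times X_2$, with the domain displayed above, generates a contraction semigroup $\T$ whose orbit $T(\cdot)z_0$ is the mild solution of the abstract Cauchy problem~\eqref{eq:ACP2}. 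Since the norm on $X$ was chosen so that $E(t)=\frac12\norm{T(t)z_0}_X^2$ for all $t\ge0$, it suffices to prove strong stability of $\T$ together with the rate $\norm{T(t)z_0}=o(t^{-2})$ for $z_0\in D(A)$; the identification of classical solutions of~\eqref{eq:wave-heat} with the orbits $T(\cdot)z_0$ for $z_0\in D(A)$ is routine bookkeeping.

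Next I would verify the hypotheses of Theorem~\ref{thm:res}, hence of Corollary~\ref{cor:decay}, with $E=\RR$ and $Q=1$. Because $Q=I\ge I$, Remark~\ref{rem:bdd} allows $N_0$ to be absorbed into $M_0$. The operator $A_0$, the restriction of $L_1$ to $\Ker(G_1+K_1)$, generates the wave semigroup on $(-1,0)$ with the transparent boundary condition $y_x(0,\cdot)+y_t(0,\cdot)=0$, which has finite-time extinction and is therefore nilpotent; hence $i\RR\subseteq\rho(A_0)$ and $M_0$ may be taken constant. The operator $A_2$ is negative self-adjoint and boundedly invertible, so $i\RR\subseteq\rho(A_2)$ and $M_2$ may be taken constant. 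From the closed form $P_2(\gl)=\tanh\sqrt{\gl}/\sqrt{\gl}$ one reads off that $s\mapsto\norm{P_2(1+is)}$ is bounded, so $\mu$ may be taken constant, while the explicit expression for $\R P_2(is)$ gives $\R P_2(is)\gtrsim(1+\abs{s})^{-1/2}$, so one may take $\eta(s)=c(1+\abs{s})^{-1/2}$ for a suitable $c>0$. Consequently
$$M_0(s)+N_0(s)^2M_2(s)^2\frac{\mu(s)}{\eta(s)}\lesssim 1+\abs{s}^{1/2},\qquad s\in\RR,$$
so I may take $M(s)=1+s^{1/2}$, which is continuous and of positive increase with exponent $\alpha=\frac12$.

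Finally, Corollary~\ref{cor:decay}(a) yields that $\T$ is strongly stable, so $E(t)=\frac12\norm{T(t)z_0}^2\to0$ as $t\to\infty$ for every $z_0\in X$, that is, for every solution of~\eqref{eq:wave-heat}. Corollary~\ref{cor:decay}(c), applied with $\alpha=\frac12$, gives $\norm{T(t)z_0}=o(t^{-2})$ as $t\to\infty$ for every $z_0\in D(A)$, whence $E(t)=\frac12\norm{T(t)z_0}^2=o(t^{-4})$ for every classical solution of~\eqref{eq:wave-heat}, as claimed.

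The real content of the argument lies entirely in the two inputs to Corollary~\ref{cor:decay}: identifying $A_0$ as an exponentially stable (indeed nilpotent) generator, which amounts to a short d'Alembert computation for the wave equation on $(-1,0)$ with the absorbing condition at $x=0$; and the lower bound $\R P_2(is)\gtrsim(1+\abs{s})^{-1/2}$, which requires the explicit transfer function together with a careful asymptotic analysis of $(\sin\theta+\sinh\theta)/(\theta(\cos\theta+\cosh\theta))$ for $\theta=\sqrt{2\abs{s}}\to\infty$, checking that the oscillatory numerator stays comparable to $\theta$ and that the denominator is comparable to $\theta\cosh\theta$ (and that both numerator and denominator are strictly positive for all $\theta>0$). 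Everything else---impedance passivity of the two boundary nodes, the compact embedding of $D(A_1)$ into $X_1$, and the matching of abstract and PDE notions of solution---is routine.
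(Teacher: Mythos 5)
Your proposal is correct and follows essentially the same route as the paper: the same choice of boundary nodes, $J=Q=1$, the nilpotency of $A_0$, the explicit transfer function $P_2$ with the bounds $\sup_{s}\|P_2(1+is)\|<\infty$ and $\R P_2(is)\gtrsim(1+|s|)^{-1/2}$, and an application of Corollary~\ref{cor:decay} with $M(s)=1+s^{1/2}$.
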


We thus recover the main result of~\cite{BatPau16}; see also~\cite{ZhaZua04}. As is noted in~\cite[Rem.~4.3(a)]{BatPau16}, this rate of energy decay for classical solutions is sharp. The result of Theorem~\ref{thm:wave-heat} remains true if we allow suitable non-constant coefficients in the wave part; see~\cite{CoxZua95} for an analysis of the wave part in this case, and~\cite{PauSOTA18} for results on the corresponding wave-heat system.  We may also replace the Dirichlet boundary condition in the heat part by a Neumann boundary condition. In this case  the origin becomes an isolated simple eigenvalue of $A$, but we may nevertheless  apply~Theorem~\ref{thm:res} in order to show that $\|(is-A)\inv\|\lesssim1+|s|^{1/2}$ for all $s\in\RR$ such that $|s|\ge1$, leading to the same decay rate as in Theorem~\ref{thm:wave-heat}, but this time for convergence towards a  solution with non-zero energy.

\subsection{Wave-heat networks}

Next we consider networks made up of one-dimensional wave equations and a single one-dimensional heat equation. In view of the more concrete application we have in mind, and in order to avoid unnecessary complication, we restrict our exposition to the important case of \emph{star-shaped} networks of wave equations. However, the reader will have no difficulty adapting our main result in this section, Theorem~\ref{thm:wave-heat-network}, to the setting of more general networks; see also Remark~\ref{rem:network} below.   To be precise, then, given $N\ge1$ and $\ell_0,\dots,\ell_{N}>0$ we consider the following system:
\begin{equation}\label{eq:wave-heat-network}
\left\{
\begin{aligned}
y^{k}_{tt}(x,t)&=y_{xx}^{k}(x,t), \hspace{-50pt}& x\in(0,\ell_k),\ t>0,\ 0\le k\le N,\\
w_t(x,t)&=w_{xx}(x,t),\hspace{-50pt}& x\in(0,1),\ t>0,\\
y^{k}(0,t)&=y^{0}(0,t),\quad y^{k}(\ell_k,t)=0,\hspace{-50pt}&t>0,\ 1\le k\le N,\\
\textstyle{\sum_{k=0}^{N}}y^{k}_x(0,t)&=0,\quad w(1,t)=0,\hspace{-50pt}&t>0,\\
y_t^{0}(\ell_0,t)&=w(0,t),\quad  y_x^{0}(\ell_0,t)=w_x(0,t), \hspace{-140pt}&t>0,
 \end{aligned}
\right.
\end{equation}
subject to suitable initial conditions on $y^{0},\dots,y^{N}$ and $w$. Note that the coupling condition for the wave equations at the centre of the star is of \emph{Kirchhoff} type. We wish to study the  asymptotic behaviour as $t\to\infty$ of the total energy $E(t)=E_W(t)+E_H(t)$ of our system, where
$$E_W(t)=\frac12\sum_{k=0}^{N}\int_0^{\ell_k}|y^{k}_x(x,t)|^2+|y^{k}_t(x,t)|^2\,\dd x,\qquad t\ge0,$$
and 
$$E_H(t)=\frac12\int_0^1|w(x,t)|^2\,\dd x,\qquad t\ge0,$$
denote the energies of the wave part and the heat, respectively. In order to formulate our system as an abstract Cauchy problem we consider the Hilbert spaces $H=\prod_{k=0}^{N}L^2(0,\ell_k)$ and 
$$V=\bigg\{(y_k)_{k=0}^{N}\in \prod_{k=0}^{N}H^1(0,\ell_k):y_k(0)=y_0(0),\ y_k(\ell_k)=0\mbox{ for }1\le k\le N\bigg\},$$
the latter endowed with the norm defined by $\|y\|_V^2=\sum_{k=0}^N\|y_k'\|^2_{L^2}$ for all $y=(y_k)_{k=0}^{N}\in V$.
Furthermore, let $X_1=V\times H$ and $X_2=L^2(0,1)$, and let $X= X_1\times X_2$. 
We may now reformulate our system as an abstract Cauchy problem of the form~\eqref{eq:ACP}  for the function 
$$z(t)=\big((y^{k}(\cdot,t))_{k=0}^N, (y_t^{k}(\cdot,t))_{k=0}^N,w\big)^\top,\qquad t\ge0,$$ on the Hilbert space $X$, where $A\colon D(A)\subseteq X\to X$ is  defined by $A(y,v,w)^\top=(v,y'',w'')^\top$ for  $(y,v,w)^\top$ in the domain
$$\begin{aligned}
D(A)=&\left\{\begin{pmatrix}(y_k)_{k=0}^{N}\\
(v_k)_{k=0}^{N}\\
w
\end{pmatrix}\in \bigg( V\cap\prod_{k=0}^{N}H^2(0,\ell_k)\bigg)\times V\times H^2(0,1):\right.\\
&\qquad\left. \sum_{k=0}^Ny_k'(0)=0,\ w(1)=0,\ v_0(\ell_0)=w(0),\ y_0'(\ell_0)=w'(0)\right\}.
\end{aligned}$$
Our aim is to relate the energy $E(t)$ of the coupled wave-heat system to the decay of the energy
$$E_0(t)=\frac12\sum_{k=0}^{N}\int_0^{\ell_k}|y^{k}_x(x,t)|^2+|y^{k}_t(x,t)|^2\,\dd x,\qquad t\ge0,$$
where $y^{0},\dots, y^{N}$ are assumed to satisfy the relevant equations in~\eqref{eq:wave-heat-network}, except with the coupling conditions imposed on $y^{0}$ at $x=\ell_0$ replaced by the non-reflective boundary condition $y_x^{0}(\ell_0,t)=-y_t^{0}(\ell_0,t)$ for all $t>0$. We shall refer to this system as the \emph{boundary-damped} network of wave equations, and we note that it corresponds to the abstract Cauchy problem 
$$\left\{\begin{aligned}
	  \dot{z}(t) &= A_0 z(t),  \qquad t\ge0,\\
	  	  z(0) &=z_0  
	\end{aligned}\right.
	$$
on $X_1$, where  
$$z(t)=\big((y^{k}(\cdot,t))_{k=0}^N, (y_t^{k}(\cdot,t))_{k=0}^N\big)^\top,\qquad t\ge0,$$
and $A_0\colon D(A_0)\subseteq X_1\to X_1$ is the operator defined by $A_0(y,v)^\top=(v,y'')^\top$ for  $(y,v)^\top$ in the domain
$$\begin{aligned}
D(A_0)=\bigg\{&\begin{pmatrix}(y_k)_{k=0}^{N}\\
(v_k)_{k=0}^{N}
\end{pmatrix} \in \bigg( V\cap\prod_{k=0}^{N}H^2(0,\ell_k)\bigg)\times V:\\
&\qquad\sum_{k=0}^Ny_k'(0)=0,\ y_0'(\ell_0)=-v_0(\ell_0)\bigg\}.
\end{aligned}$$

We now describe the rate of energy decay in the wave-heat network~\eqref{eq:wave-heat-network} in terms of the rate of energy decay in the boundary-damped network of wave equations. In general, for classical solutions $E(t)$ will decay slightly more slowly as $t\to\infty$ than  $E_0(t)$.

\begin{thm}\label{thm:wave-heat-network} Suppose that $r_0\colon \RR_+\to(0,\infty)$ is a continuous strictly decreasing function such that $r_0(t)\to0$ as $t\to\infty$ and $E_0(t) =O(r_0(t))$ as $t\to\infty$ for all classical solutions of the boundary-damped network of wave equations. Then $E(t)\to0$ as $t\to\infty$ for all solutions of the wave-heat network~\eqref{eq:wave-heat-network}, and for classical solutions $$E(t)=O(r(t)),\qquad t\to\infty,$$ where  $r$ is the inverse of $s\mapsto s^{-1/4}r_0\inv(s)$ for  $0<s\le r_0(0)$. Furthermore, if $E_0(t)=O(t^{-\alpha})$ as $t\to\infty$ for some $\alpha>0$ then $E(t)=o(t^{-4\alpha/(4+\alpha)})$ as $t\to\infty$ for all classical solutions of~\eqref{eq:wave-heat-network}.
\end{thm}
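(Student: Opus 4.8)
The plan is to recognise~\eqref{eq:wave-heat-network} as an instance of the abstract coupled system~\eqref{eq:coupled_sys} with $J=1$ and $Q=1$ and then to invoke Corollary~\ref{cor:decay}, the quantitative hypothesis on the boundary-damped network entering through the resolvent bound $M_0$ for $A_0$. First I would set up the two boundary nodes. The heat part $(G_2,L_2,K_2)$ is literally the boundary node on $(\C,X_2,\C)$ studied in Section~\ref{sec:wave_heat}, so $A_2$ is negative self-adjoint and invertible, $P_2(\gl)=\tanh\sqrt\gl/\sqrt\gl$, $\sup_{s\in\RR}\norm{P_2(1+is)}<\infty$ and $\R P_2(is)\gtrsim(1+\abs s)^{-1/2}$. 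For the wave part I would take $X_1=V\times H$, set $L_1(y,v)^\top=(v,y'')^\top$ on the natural domain, $G_1(y,v)^\top=v_0(\ell_0)$ and $K_1(y,v)^\top=y_0'(\ell_0)$, and verify impedance passivity by integrating by parts on each edge: using the Kirchhoff condition $\sum_ky_k'(0)=0$, the continuity conditions $y_k(0)=y_0(0)$ (hence $v_k(0)=v_0(0)$) and the Dirichlet conditions $y_k(\ell_k)=0$ for $k\ge1$ (hence $v_k(\ell_k)=0$), all boundary terms cancel except the one at $x=\ell_0$, yielding $\R\iprod{L_1x}{x}_{X_1}=\R\iprod{G_1x}{K_1x}_\C$. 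Combined with the Rellich--Kondrachov theorem (so that $A_1$ has compact resolvent, hence is maximally dissipative) and surjectivity of the scalar functional $G_1$, this shows $(G_1,L_1,K_1)$ is an impedance passive boundary node on $(\C,X_1,\C)$. With $J=1$ the coupling conditions in~\eqref{eq:coupled_sys} read $v_0(\ell_0)=w(0)$ and $y_0'(\ell_0)=w'(0)$, so the operator $A$ of Section~\ref{sec:coupled} coincides with the operator $A$ of the statement, and $A_0$, the restriction of $L_1$ to $\Ker(G_1+K_1)$, is precisely the generator of the boundary-damped network of wave equations.

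The key new step is to convert the energy decay of $A_0$ into a resolvent bound. Because the $X_1$-norm is chosen so that $E_0(t)=\tfrac12\norm{T_0(t)z_0}_{X_1}^2$, the hypothesis $E_0(t)=O(r_0(t))$ for classical solutions together with a standard uniform boundedness argument gives $\norm{T_0(t)(\gl-A_0)\inv}\le C\,r_0(t)^{1/2}$ for a fixed $\gl\in\rho(A_0)$ and all $t\ge0$. Applying the result recorded in Remark~\ref{rem:asymp} with $m(t)=C\,r_0(t)^{1/2}$, which is continuous, non-increasing and tends to $0$, then yields $i\RR\subseteq\rho(A_0)$ and, for a suitable $c\in(0,1)$, $\norm{(is-A_0)\inv}=O\bigl(r_0\inv(c/s^2)\bigr)$ as $\abs s\to\infty$; together with continuity of the resolvent on compact parts of $i\RR$ this produces a continuous increasing function $M_0\colon[0,\infty)\to[r,\infty)$ with $\norm{(is-A_0)\inv}\le M_0(\abs s)$ on $\RR$ and $M_0(s)\asymp r_0\inv(c/s^2)$ for large $s$.

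With $M_0$ in hand I would apply Corollary~\ref{cor:decay} with $E=\RR$. The heat part allows $M_2$ and $\mu$ to be taken constant and $\eta(s)=c'(1+\abs s)^{-1/2}$, and since $Q=I$ Remark~\ref{rem:bdd} (via Proposition~\ref{prp:Q}(c)) gives $N_0(s)^2\lesssim M_0(s)$; hence
\[
M_0(s)+N_0(s)^2M_2(s)^2\frac{\mu(s)}{\eta(s)}\lesssim M_0(\abs s)\,(1+\abs s)^{1/2}=:M(\abs s),
\]
and $M$ is continuous and, since $M_0$ is increasing, has positive increase. Corollary~\ref{cor:decay}(a) then gives strong stability of $\T$, so $E(t)=\tfrac12\norm{T(t)z_0}^2\to0$ for every solution, and Corollary~\ref{cor:decay}(c) gives $\norm{T(t)z_0}=O(1/M\inv(t))$, i.e. $E(t)=O(1/M\inv(t)^2)$, for all classical solutions $z_0\in D(A)$. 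It remains to compute $M\inv$: for large $s$ one has $M(s)\asymp s^{1/2}r_0\inv(c/s^2)$, and writing $\sigma=c/s^2$ this becomes $M(s)\asymp c^{1/4}\,g(c/s^2)$ with $g(\sigma)=\sigma^{-1/4}r_0\inv(\sigma)$ the continuous strictly decreasing function whose inverse is $r$; since $g$ is decreasing, inverting gives $M\inv(t)\asymp r(t/c^{1/4})^{-1/2}$ up to a constant, and because $r$ is decreasing and $c<1$ this yields $1/M\inv(t)^2\lesssim r(t)$, hence $E(t)=O(r(t))$. For the final assertion, if $r_0(t)=O(t^{-\ga})$ then $r_0\inv(\sigma)\asymp\sigma^{-1/\ga}$, so $M_0(s)\lesssim1+s^{2/\ga}$ and $M(s)\lesssim1+s^{(4+\ga)/(2\ga)}$; the little-$o$ refinement in Theorem~\ref{thm:sg}(c) then gives $\norm{T(t)z_0}=o(t^{-2\ga/(4+\ga)})$, i.e. $E(t)=o(t^{-4\ga/(4+\ga)})$, for all classical solutions.

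I expect the main obstacle to be this second step, the passage from the energy bound for $A_0$ to an admissible resolvent bound $M_0$ through Remark~\ref{rem:asymp}, together with the exponent bookkeeping at the end: conceptually nothing beyond Theorems~\ref{thm:res} and~\ref{thm:sg} is required, but one must handle the multiplicative constant coming out of Remark~\ref{rem:asymp} — harmless only because $r$ is decreasing and the constant may be taken in $(0,1)$ — and track exponents carefully through the substitution $\sigma=c/s^2$, which is where the factor $s^{-1/4}$, and hence the exponent $4\ga/(4+\ga)$, originates.
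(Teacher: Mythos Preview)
Your proposal is correct and follows essentially the same route as the paper's proof: the same two boundary nodes, the same conversion of the $E_0$ decay hypothesis into a resolvent bound for $A_0$ via Remark~\ref{rem:asymp}, and the same application of Corollary~\ref{cor:decay} with $M(s)=(1+s)^{1/2}M_0(s)$. The one point where the paper proceeds more cleanly is the handling of the multiplicative constant at the end: rather than arguing that the constant from Remark~\ref{rem:asymp} can be taken in $(0,1)$ (which is not obviously true once the constant $C$ from the uniform boundedness step and the implicit constants in the various $\asymp$ relations are absorbed), the paper invokes the fact that a function of positive increase satisfies $M\inv(at)=O(M\inv(t))$ for every $a>0$ \cite[Prop.~2.2]{RoSeSt19}, which disposes of the constant without any sign condition.
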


\begin{proof}
We apply  Corollary~\ref{cor:decay}. We begin by expressing  the operator $A$ in terms of two impedance passive boundary nodes $(G_1,L_1,K_1)$ and $(G_2,L_2,K_2)$ on $(\C,X_1,\C)$ and $(\C,X_2,\C)$ corresponding to the wave network and to the heat equation, respectively, and with $J=1$ and $Q=1$.
Specifically, we define  $L_j\colon D(L_j)\subseteq X_j\to X_j$, for $j=1,2$, by $L_1(y,v)^\top=(v,y'')^\top$ with 
$$D(L_1)=\bigg\{\begin{pmatrix}(y_k)_{k=0}^{N}\\
(v_k)_{k=0}^{N}
\end{pmatrix}
 \in \bigg( V\cap\prod_{k=0}^{N}H^2(0,\ell_k)\bigg)\times V:\sum_{k=0}^Ny_k'(0)=0\bigg\}$$
 and $L_2w=w''$ with $D(L_2)=\{w\in H^2(0,1):w(1)=0\}$. Furthermore, let $G_j, K_j\colon D(L_j)\subseteq X_j\to \C$, for $j=1,2$, be defined by 
$$G_1\begin{pmatrix}y\\v\end{pmatrix}=v_0(\ell_0)\qquad\mbox{and}\qquad K_1\begin{pmatrix}y\\v\end{pmatrix}=y_0'(\ell_0)$$ 
for $(y,v)^\top=((y_k)_{k=0}^{N},(v_k)_{k=0}^{N})^\top\in D(L_1)$, and $G_2w=-w'(0)$ and $K_2w=w(0)$ for $w\in D(L_2)$, respectively. Then the operator $A$ governing the wave-heat network~\eqref{eq:wave-heat-network} may be written as $A=\diag(L_1,L_2)$ with $D(A)=\{(y,v,w)^\top\in D(L_1)\times D(L_2): G_1(y,v)^\top=K_2w, G_2w=-K_1(y,v)^\top\}$. As was shown in Section~\ref{sec:wave_heat}, $(G_2,L_2,K_2)$ is an impedance passive boundary node on $(\C,X_2,\C).$ We now show that $(G_1,L_1,K_1)$ is an impedance passive boundary node on $(\C,X_1,\C)$. For $x=(y,v)^\top\in D(L_1)$, where $y=(y_k)_{k=0}^{N}$ and $v=(v_k)_{k=0}^{N}$, integration by parts gives
\begin{equation}\label{eq:network_ip}
\begin{aligned}
\R\langle L_1 x,x\rangle& =\R \sum_{k=0}^N \big(\langle v_k',y_k'\rangle_{L^2}+\langle y_k'',v_k\rangle_{L^2}\big)\\
&=\R v_0(\ell_0)\overline{y_0'(\ell_0)}-\R v_0(0)\sum_{k=0}^N\overline{y_k'(0)}\\
&=\R \langle G_1x,K_1 x\rangle_\C,
\end{aligned}
\end{equation}
so the restriction $A_1$ of $L_1$ to $\Ker G_1$ is dissipative. By the Rellich--Kondrachov theorem $A_1$ has compact resolvent and in particular is maximally dissipative, so by the Lumer--Phillips theorem it generates a contraction semigroup on $X_1$. Since the operator $G_1$ is non-zero it maps onto its codomain $\C$ and in particular admits a right-inverse  $G_1^r\in\B(\C,D(L_1))$.  Thus $(G_1,L_1,K_1)$ is a boundary node, and by~\eqref{eq:network_ip} it is impedance passive.

By our choice of norm on $X_1$ and our assumption on the energy $E_0$ of the network of boundary-damped wave equations we have $\|T_0(t)z_0\|=O(r_0(t)^{1/2})$ as $t\to\infty$ for all $z_0\in D(A_0)$, where $(T_0(t))_{t\ge0}$ is the contraction semigroup on $X_1$ generated by $A_0$. It follows from the uniform boundedness principle that if $\gl\in\rho(A_0)$ then $\|T_0(t)(\gl-A_0)\inv\|=O(r_0(t)^{1/2})$ as $t\to\infty$, and hence $i\RR\subseteq\rho(A_0)$ and $\|(is-A_0)\inv\|=O(r_0^{-1}(c|s|^{-2}))$ as $|s|\to\infty$ for some $c>0$, by Remark~\ref{rem:asymp}. As was shown in Section~\ref{sec:wave_heat}, $i\RR\subseteq\rho(A_2)$ and $\sup_{s\in\RR}\|(is-A_2)\inv\|<\infty$, and the transfer function $P_2$ of the boundary node $(G_2,L_2,K_2)$ satisfies $\sup_{s\in\RR}\|P_2(1+is)\|<\infty$ and $\R P_2(is)\gtrsim(1+|s|)^{-1/2}$ for all $s\in\RR$. We now apply Corollary~\ref{cor:decay}. We may take the function $M_0\colon\RR\to(0,\infty)$ to be defined, for suitable constants $C,s_0>0$, by $M_0(s)=Cr_0^{-1}(c|s|^{-2})$ when $|s|> s_0$ and by $M_0(s)=Cr_0^{-1}(c|s_0|^{-2})$ for $|s|\le s_0$. Furthermore, we may take $M_2$ and $\mu$ to be constant, and $\eta(s)=\delta(1+|s|)^{-1/2}$ for some $\delta>0$ and all $s\in\RR$. We may therefore take  $M\colon\RR_+\to(0,\infty)$ to be defined by $M(s)=(1+s)^{1/2}M_0(s)$, $s\ge0$, which is a continuous function that has positive increase. We conclude that $A$ generates a strongly stable contraction semigroup $\T$ on $X$. We note that, by the choice of norm on $X$, the energy of the solution with initial data $z_0\in X$ is given by $E(t)=\frac12\|T(t)z_0\|^2$ for all $t\ge0$, and hence $E(t)\to0$ as $t\to\infty$ for all solutions of~\eqref{eq:wave-heat-network}. By~\cite[Prop.~2.2]{RoSeSt19} we have $M\inv(at)=O(M\inv(t))$   as $t\to\infty$ for all $a>0$,  so Corollary~\ref{cor:decay}(c) yields $E(t)=O(r(t))$ as $t\to\infty$ for all classical solutions, where $r$ is the inverse of $s\mapsto s^{-1/4}r_0\inv(s)$ for  $0<s\le r_0(0)$. Finally, if $r_0(t)=O(t^{-\alpha})$ as $t\to\infty$ for some $\alpha>0$ then we may take $M(s)=1+s^{(4+\alpha)/2\alpha}$  for all $s\ge0$ in Corollary~\ref{cor:decay} to obtain that $E(t)=o(t^{-4\alpha/(4+\alpha)})$ as $t\to\infty$ for all classical solutions.
\end{proof}

As an application of Theorem~\ref{thm:wave-heat-network} we consider a special class of star-shaped networks in which the ratios $\ell_k/\ell_j$ for $1\le k<j\le N$ are irrational numbers of \emph{constant type}, which is to say that the coefficients appearing in the continued fraction expansion form a bounded sequence. Such irrationals may be thought of as being poorly approximable by rational numbers, and they are known to form an uncountable subset of $\RR$ of measure zero.

\begin{cor}
Consider the wave-heat network~\eqref{eq:wave-heat-network} with $N\ge2$, and suppose that $\ell_k/\ell_j$ is an irrational number of constant type for $1\le k<j\le N$. Then $E(t)\to0$ as $t\to\infty$ for all solutions, and for classical solutions 
\begin{equation}\label{eq:network_energy}
E(t)=o\big(t^{-\frac{4}{4N-3}}\big),\qquad t\to\infty.
\end{equation}
\end{cor}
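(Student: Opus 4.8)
\emph{The plan} is to invoke Theorem~\ref{thm:wave-heat-network}, which reduces the corollary to a single decay estimate for the associated \emph{boundary-damped} star network of wave equations: it suffices to prove that $E_0(t)=O(t^{-1/(N-1)})$ as $t\to\infty$ for every classical solution of that network. Indeed, $4\alpha/(4+\alpha)=4/(4N-3)$ when $\alpha=1/(N-1)$, so the last sentence of Theorem~\ref{thm:wave-heat-network} then yields $E(t)=o(t^{-4/(4N-3)})$ for classical solutions, while $E(t)\to0$ for all solutions is part of the same theorem. Since $E_0(t)=\tfrac12\|T_0(t)z_0\|_{X_1}^2$ by the choice of norm on $X_1$, and since (as in the proof of Theorem~\ref{thm:wave-heat-network}) $A_0$ generates a contraction semigroup on $X_1$, Theorem~\ref{thm:sg}(c) applied to $A_0$ shows that the required bound on $E_0$ follows once we establish
$$i\RR\subseteq\rho(A_0)\qquad\text{and}\qquad\|(is-A_0)\inv\|_{\B(X_1)}\lesssim 1+|s|^{2(N-1)},\qquad s\in\RR .$$
So everything comes down to this resolvent bound for the boundary-damped star network.

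\emph{To prove the resolvent bound} I would argue by the classical d'Alembert method on the network. Fix $s\in\RR$ and $(f_1,f_2)^\top\in X_1=V\times H$; putting $v=isy-f_1$, the equation $(is-A_0)(y,v)^\top=(f_1,f_2)^\top$ becomes $y_k''+s^2y_k=-(f_{2,k}+isf_{1,k})$ on each edge $(0,\ell_k)$, $0\le k\le N$, together with continuity and the Kirchhoff condition $\sum_{k=0}^{N}y_k'(0)=0$ at the centre, $y_k(\ell_k)=0$ for $1\le k\le N$, and the non-reflecting condition $y_0'(\ell_0)+isy_0(\ell_0)=f_{1,0}(\ell_0)$ at the damped end. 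Writing $y_k(x)=a\cos(sx)+b_k\sin(sx)+p_k(x)$, with $p_k$ the particular solution vanishing to first order at $0$ and $a$ the common centre value, the Dirichlet conditions give $b_k=-(a\cos(s\ell_k)+p_k(\ell_k))/\sin(s\ell_k)$ for $1\le k\le N$ whenever $\sin(s\ell_k)\neq0$, the non-reflecting condition expresses $sb_0$ through $a$ and the data (the factors $e^{\pm is\ell_0}$ cancelling), and substitution into the Kirchhoff condition leaves a single scalar equation for $a$ whose coefficient is, when all $\sin(s\ell_k)\neq0$, a nonzero multiple of
$$\Delta(s)=\Bigl(i+\sum_{k=1}^{N}\cot(s\ell_k)\Bigr)\prod_{k=1}^{N}\sin(s\ell_k).$$
Because the ratios $\ell_k/\ell_j$ are irrational (being of constant type), at most one of the $\sin(s\ell_k)$ vanishes for any given real $s$, and the bracket has imaginary part $1$; hence $\Delta(s)\neq0$ for all real $s$ (a short separate argument covers the case of one vanishing factor), which gives $i\RR\subseteq\rho(A_0)$, and Cramer's rule produces an explicit formula for $(is-A_0)\inv$ whose size is governed by powers of $|s|$ and by the factors $|\sin(s\ell_k)|\inv$ arising near the resonances $s\ell_k\in\pi\ZZ$.

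\emph{The key estimate, and the main obstacle,} is then to bound these near-resonance factors using the constant-type hypothesis. With $|\sin(s\ell_k)|\gtrsim\mathrm{dist}(s\ell_k/\pi,\ZZ)$, one shows: there is $c>0$ such that for $|s|\ge1$ at most one index $k\in\{1,\dots,N\}$ satisfies $\mathrm{dist}(s\ell_k/\pi,\ZZ)<c/|s|$, and consequently $\prod_{k\in I}|\sin(s\ell_k)|\gtrsim|s|^{-(N-1)}$ for every $(N-1)$-element subset $I\subseteq\{1,\dots,N\}$. The point is that if $\mathrm{dist}(s\ell_j/\pi,\ZZ)$ and $\mathrm{dist}(s\ell_k/\pi,\ZZ)$ were both below a small multiple of $1/|s|$, then, writing $s\ell_j/\pi=m_j+\delta_j$ and $s\ell_k/\pi=m_k+\delta_k$ with $m_j,m_k\in\ZZ$, one would get $\mathrm{dist}(m_j\ell_k/\ell_j,\ZZ)\le|\delta_k-(\ell_k/\ell_j)\delta_j|\lesssim1/|s|$, contradicting the constant-type lower bound $\mathrm{dist}(m_j\ell_k/\ell_j,\ZZ)\gtrsim1/|m_j|\gtrsim1/|s|$ once $|s|$ is large. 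Feeding these lower bounds into the formula for $(is-A_0)\inv$ — while keeping track of the powers of $|s|$ produced by $\Delta(s)$, by the cofactors, by the data, and by the differentiation implicit in the $X_1$-norm — should yield $\|(is-A_0)\inv\|\lesssim1+|s|^{2(N-1)}$, which is what Theorem~\ref{thm:wave-heat-network} then needs. The delicate part is precisely this bookkeeping: one must verify that the potentially small factors $|\sin(s\ell_k)|\inv$ cannot all accumulate — by the dichotomy above, at most $N-1$ of them are ever combined — and that the resulting exponent is indeed $2(N-1)$ and no larger. Computations of this kind for networks of strings can be found in~\cite{AmmJel04}; alternatively, the bound for $A_0$ may be obtained by an iterated application of Theorem~\ref{thm:res}, peeling off the Dirichlet-terminated edges one at a time.
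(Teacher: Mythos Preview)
Your reduction via Theorem~\ref{thm:wave-heat-network} to the bound $E_0(t)=O(t^{-1/(N-1)})$ for the boundary-damped star is exactly the paper's strategy, and your identification of the target resolvent bound $\|(is-A_0)^{-1}\|\lesssim 1+|s|^{2(N-1)}$ is correct. The difference lies in how this input is obtained. The paper does not carry out any d'Alembert analysis: it simply invokes \cite[Prop.~4.4]{ValZua09}, which (under the constant-type assumption) gives $\|T_0(t)z_0\|=O(t^{-\alpha/(2N-2)})$ for $z_0\in D((-A_0)^\alpha)$ with $\alpha\in(0,\tfrac12)$, and then uses the identification $[D(A_0),X_1]_{1-\alpha}=D((-A_0)^\alpha)$, the uniform boundedness principle, and the semigroup property (taking $\alpha=1/3$ and cubing) to upgrade this to $E_0(t)=O(t^{-1/(N-1)})$ for classical solutions. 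Your route---explicit resolvent formulas on each edge, the Diophantine dichotomy that at most one $\sin(s\ell_k)$ can fall below $c/|s|$, and the resulting lower bound on $|\Delta(s)|$---is more self-contained and makes the role of the constant-type hypothesis completely transparent, but it trades a two-line citation for a nontrivial bookkeeping exercise that you correctly flag as delicate (in particular, one must check that the potentially singular factors $1/\sin(s\ell_k)$ in the individual coefficients $b_k$ are absorbed by corresponding smallness of the numerators, not merely that $\Delta(s)$ stays away from zero). Both approaches yield the same exponent.
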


\begin{proof}
Let $A_0\colon D(A_0)\subseteq X_1\to X_1$ be the operator corresponding to the boundary-damped network of wave equations. Since $A_0$ generates a contraction semigroup on $X_1$ it follows from~\cite[Sect.~4.3]{Lun18book} that for all $\alpha\in(0,1)$ the complex interpolation space $[D(A_0),X_1]_{1-\alpha}$ coincides with the domain of the fractional power $(-A_0)^\alpha$. By~\cite[Prop.~4.4]{ValZua09} we have $\|T_0(t)z_0\|=O(t^{-\alpha/(2N-2)})$ as $t\to\infty$ for all $z_0\in D((-A_0)^\alpha)$, provided that $\alpha\in(0,\frac12)$. For the choice $\alpha=1/3$, an application of the uniform boundedness principle gives $\|T_0(t)(-A_0)^{-1/3}\|=O(t^{-1/(6N-6)})$ as $t\to\infty$, and hence by the semigroup property $E_0(t)=O(t^{-1/(N-1)})$ as $t\to\infty$ for all classical solutions of the boundary-damped network of wave equations. It follows from Theorem~\ref{thm:wave-heat-network} that $E(t)\to0$ as $t\to\infty$ for all mild solutions of the wave-heat network~\eqref{eq:wave-heat-network}, and that $E(t)=o(t^{-{4}/{(4N-3)}})$ as $t\to\infty$ for all classical solutions.
\end{proof}

\begin{rem}\label{rem:network}
For simplicity we have considered star-shaped networks of wave equations only, but the same analysis can be carried out for other network structures; see for instance~\cite{AmmJel04, AmJeKh05,NicVal07, ValZua09}.
\end{rem}

\subsection{A wave equation with an acoustic boundary condition}

As our final PDE model we study a multi-dimensional wave equation with an \emph{acoustic boundary condition}, following \cite{AbbNic15b, Bea76, RivQin03}. Given $n\ge 2$ let $\Omega$ be a bounded domain in $\RR^n$ with $C^2$ boundary $\partial\Omega=\Gamma_0\cup\Gamma_1$, where $\Gamma_0,\Gamma_1$ are assumed to be closed, non-empty and  disjoint. We consider the problem
\begin{equation}\label{eq:wave_ac}
\left\{
\begin{aligned}
y_{tt}(x,t)&=\Delta y(x,t), &x\in \Omega,\ t>0,\\
y(x,t)&=0,&x\in\Gamma_0,\ t>0,\\
\partial_\nu y(x,t)&=p_t(x,t),&x\in\Gamma_1,\ t>0,\\
m(x)p_{tt}(x,t)&=-d(x)p_t(x,t)-k(x)p(x,t)-y_t(x,t), &x\in\Gamma_1,\ t>0,
\end{aligned}
\right.
\end{equation}
to be solved subject to suitable initial conditions for $y$, $y_t$, $p$ and $p_t$. Here $\partial_\nu$ denotes the outward normal derivative and the functions $m,d,k\in L^\infty(\Gamma_1)$ are assumed to be such that $m(x)\ge m_0$, $d(x)\ge d_0$ and $k(x)\ge k_0$ for some constants $m_0,d_0,k_0>0$ and almost all $x\in\Gamma_1$. Let $H_{\Gamma_0}^1(\Omega)=\{y\in H^1(\Omega):y|_{\Gamma_0}=0\}$ and let $X=H_{\Gamma_0}^1(\Omega)\times L^2(\Omega)\times L^2(\Gamma_1)^2$ with the inner product
$$\langle x_1,x_2\rangle=\langle\nabla y_1,\nabla y_2 \rangle_{L^2(\Omega)}+\langle v_1,v_2 \rangle_{L^2(\Omega)}+\langle kp_1,p_2 \rangle_{L^2(\Gamma_1)}+\langle mq_1,q_2 \rangle_{L^2(\Gamma_1)}$$
for $x_j=(y_j,v_j,p_j,q_j)^\top\in X$, $j=1,2$. Then we may formulate our problem~\eqref{eq:wave_ac} as an abstract Cauchy problem on the Hilbert space $X$, with $A\colon D(A)\subseteq X\to X$ defined by 
\begin{equation}\label{eq:A_def}
A \begin{pmatrix}
y\\v\\ p\\ q
\end{pmatrix}
=\begin{pmatrix}v\\\Delta y\\q\\
-m\inv (dq+kp+v|_{\Gamma_1})
\end{pmatrix}
\end{equation}
for $(y,v,p,q)^\top$ in the domain $D(A)=\{(y,v,p,q)^\top\in  H_{\Gamma_0}^1(\Omega)^2\times L^2(\Gamma_1)^2: \Delta y\in L^2(\Omega),\ \partial_\nu y|_{\Gamma_1}=q \}$ of $A$. Here we interpret $\partial_\nu y|_{\Gamma_1}=q$ as an equality in $H^{-1/2}(\Gamma_1)$, the (conjugate) dual space of $H^{1/2}(\Gamma_1)$ with respect to the pivot space $L^2(\Gamma_1)$. We wish to study the asymptotic behaviour of the solutions of~\eqref{eq:wave_ac} by  relating the resolvent growth of $A$ along the imaginary axis to that of the operator $A_0\colon D(A_0)\subseteq X_1\to X_1$, where $X_1=H_{\Gamma_0}^1(\Omega)\times L^2(\Omega)$ is endowed with the inner product  
$$\langle x_1,x_2\rangle=\langle\nabla y_1,\nabla y_2 \rangle_{L^2(\Omega)}+\langle v_1,v_2 \rangle_{L^2(\Omega)}$$
for $x_j=(y_j,v_j)^\top\in X_1$, $j=1,2$, and
$$A_0 \begin{pmatrix}
y\\v
\end{pmatrix}
=\begin{pmatrix}v\\\Delta y 
\end{pmatrix}$$
for $(y,v)^\top$ in the domain 
$$
D(A_0)=\bigg\{\begin{pmatrix}y\\ v 
\end{pmatrix}\in  H_{\Gamma_0}^1(\Omega)^2:   \Delta y\in L^2(\Omega),\ \partial_\nu y|_{\Gamma_1}+v|_{\Gamma_1}=0\bigg\},$$
where $\partial_\nu y|_{\Gamma_1}+v|_{\Gamma_1}=0$ is interpreted as an equality in $H^{-1/2}(\Gamma_1)$. Note that $A_0$ governs the evolution of the \emph{boundary-damped} wave equation in the Hilbert space $X_1$, that is to say the wave equation on $\Omega$ subject to a Dirichlet boundary condition on $\Gamma_0$ and the dissipative boundary condition $\partial_\nu y=-y_t$ on $\Gamma_1$. We define the energy of a solution of~\eqref{eq:wave_ac} by
$$E(t)=\frac12\int_\Omega|\nabla y(x,t)|^2+|y_t(x,t)|^2\,\dd x+\frac12\int_{\Gamma_1}k(s)|p(s,t)|^2+m(s)|p_t(s,t)|^2\,\dd s$$
for all $t\ge0$. Our next result describes the asymptotic behaviour of this energy given an estimate for the resolvent of  the generator $A_0$ of the boundary-damped wave system.

\begin{thm}\label{thm:ac_wave}
 Suppose that $i\RR\subseteq\rho(A_0)$ and let $M_0\colon\RR_+\to(0,\infty)$ be a continuous non-decreasing function such that 
$$\|(is-A_0)\inv\|\le M_0(|s|),\qquad s\in\RR.$$ 
Then $E(t)\to0$ as $t\to\infty$ for all solutions of~\eqref{eq:wave_ac}, and for classical solutions
$$E(t)=O(r(t)),\qquad t\to\infty,$$ 
where $r$ is the inverse of $s\mapsto s\inv M_0(s^{-1/2})$ for $0<s\le1$. Furthermore, if there exists $\alpha\ge0$ such that $\|(is-A_0)\inv\|\lesssim1+|s|^\alpha$ for all $s\in\RR$, then $E(t)=o(t^{-2/(2+\alpha)})$ as $t\to\infty$ for all classical solutions of~\eqref{eq:wave_ac}.
\end{thm}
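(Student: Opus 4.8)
The plan is to realise the acoustic wave system~\eqref{eq:wave_ac} as an instance of the coupled system treated in Theorem~\ref{thm:coupled}, with the wave equation on $\Omega$ playing the role of the boundary control system $(G_1,L_1,K_1)$ and the acoustic dynamics on $\Gamma_1$ that of the (bounded) linear system $(A_2,B_2,C_2,D_2)$, and then to feed the resulting resolvent bound for $A$ into Theorem~\ref{thm:sg}. Concretely I would take $X_1=H_{\Gamma_0}^1(\Omega)\times L^2(\Omega)$, $L_1(y,v)^\top=(v,\Delta y)^\top$ on $D(L_1)=\{(y,v)^\top\in H_{\Gamma_0}^1(\Omega)^2:\Delta y\in L^2(\Omega)\}$, $G_1(y,v)^\top=\partial_\nu y|_{\Gamma_1}$ and $K_1(y,v)^\top=v|_{\Gamma_1}$; here $G_1$ takes values in $H^{-1/2}(\Gamma_1)$ and $K_1$ in $H^{1/2}(\Gamma_1)$, which are conjugate duals of one another relative to the pivot space $L^2(\Gamma_1)$, so by Remarks~\ref{rem:Riesz} and~\ref{rem:Riesz_dyn} the theory still applies once the inner product in the passivity estimate is read as the duality pairing. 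For the acoustic part I would take $X_2=L^2(\Gamma_1)^2$ with the weighted inner product $\iprod{(p_1,q_1)}{(p_2,q_2)}=\iprod{kp_1}{p_2}_{L^2(\Gamma_1)}+\iprod{mq_1}{q_2}_{L^2(\Gamma_1)}$, together with the bounded operators $A_2(p,q)^\top=(q,-m\inv(dq+kp))^\top$, $B_2u=(0,m\inv u)^\top$, $C_2(p,q)^\top=q$, $D_2=0$, with $U_2=L^2(\Gamma_1)$ and $J\colon H^{1/2}(\Gamma_1)\hookrightarrow L^2(\Gamma_1)$ the inclusion. A short computation then shows that, with $Q=I$, the coupled system and the operator $A$ of Theorem~\ref{thm:coupled} reduce exactly to~\eqref{eq:wave_ac} and~\eqref{eq:A_def}, and that $A_0$ coincides with the boundary-damped wave operator in the statement.

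Next I would verify the hypotheses of Theorem~\ref{thm:coupled}. Green's formula, using $v|_{\Gamma_0}=0$, gives $\R\iprod{L_1x}{x}_{X_1}=\R\iprod{\partial_\nu y|_{\Gamma_1}}{v|_{\Gamma_1}}$ for $x=(y,v)^\top\in D(L_1)$, so the wave triple is impedance passive; the restriction of $L_1$ to $\Ker G_1$ generates a $C_0$-semigroup on $X_1$ (the wave semigroup with Dirichlet data on $\Gamma_0$ and Neumann data on $\Gamma_1$), and a bounded right-inverse of $G_1$ is obtained by solving a mixed elliptic problem, so $(G_1,L_1,K_1)$ is a boundary node. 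The operator $A_2$ is bounded, and after cancelling the skew-symmetric terms involving $k$ the impedance-passivity inequality~\eqref{eq:imped_pass} reduces to $-\int_{\Gamma_1}d\abs q^2\le0$, which holds since $d\ge0$; in particular $A_2$ is dissipative, hence generates a contraction semigroup. Solving $(\lambda-A_2)(p,q)^\top=B_2u$ gives $P_2(\lambda)=C_2(\lambda-A_2)\inv B_2$, namely multiplication by $\lambda/(m\lambda^2+d\lambda+k)$, and since $m(is)^2+d(is)+k=(k-ms^2)+ids$ is bounded away from $0$ uniformly in $s\in\RR$ (using $d\ge d_0>0$, $k\ge k_0>0$ and the essential boundedness of $m$) we obtain $i\RR\subseteq\rho(A_2)$ and $\sup_{s\in\RR}\norm{(is-A_2)\inv}<\infty$, so $M_2$ may be taken constant.

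The quantitative heart of the argument is the lower bound for $\R P_2(is)$: the formula above gives $\R P_2(is)=ds^2/\big((k-ms^2)^2+d^2s^2\big)$ pointwise on $\Gamma_1$, and bounding the denominator by $C(1+s^4)$ for a constant $C$ depending only on $m,d,k$ yields $\R P_2(is)\ge\eta(s)I$ with $\eta(s)=d_0s^2/\big(C(1+s^4)\big)$, which is positive on $E:=\RR\setminus\{0\}$. Applying Theorem~\ref{thm:coupled} on this set (with $Q=I$, so that $N_0(s)^2\lesssim M_0(\abs s)$ by Remark~\ref{rem:dyn}, and with $M_2$ constant) gives $iE\subseteq\rho(A)$ and
\[
\norm{(is-A)\inv}\lesssim M_0(\abs s)+\frac{M_0(\abs s)}{\eta(s)}\lesssim M_0(\abs s)\,\frac{1+s^4}{s^2},\qquad s\in E.
\]
To cover $s=0$ I would note that $P_2(0)=0$, so $I+J^*P_2(0)JP_1(0)=I$ is invertible; since $0$ lies in the resolvent set of $A_2$ and of the restriction of $L_1$ to $\Ker G_1$, the characterisation in Remark~\ref{rem:dyn} (or a direct steady-state computation) gives $0\in\rho(A)$, and hence $\norm{(is-A)\inv}$ is bounded on $i[-1,1]$. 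Combining the two regimes yields $i\RR\subseteq\rho(A)$ and $\norm{(is-A)\inv}\lesssim M(\abs s)$ for all $s\in\RR$, where $M(s):=1+s^2M_0(s)$ is continuous and, since $M_0$ is non-decreasing and positive, of \emph{positive increase}.

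Finally I would conclude via Theorem~\ref{thm:sg}, using throughout that $E(t)=\tfrac12\norm{T(t)z_0}_X^2$ by the choice of norm on $X$. Since $A$ generates a contraction semigroup $\T$ (Theorem~\ref{thm:coupled}), part~(a) gives strong stability and hence $E(t)\to0$ for every $z_0\in X$; part~(c) gives $\norm{T(t)z_0}=O(1/M\inv(t))$ for $z_0\in D(A)$, and the substitution $\sigma=s^{-2}$, under which $s^2M_0(s)$ becomes $\sigma\inv M_0(\sigma^{-1/2})$, identifies $1/M\inv(t)^2$ with $r(t)$ up to a multiplicative constant (as in the proof of Theorem~\ref{thm:wave-heat-network}), whence $E(t)=O(r(t))$. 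In the power case one takes $M_0(s)=1+s^\alpha$, so $\norm{(is-A)\inv}\lesssim1+\abs s^{2+\alpha}$, and the last assertion of Theorem~\ref{thm:sg}(c) gives $\norm{T(t)z_0}=o(t^{-1/(2+\alpha)})$, hence $E(t)=o(t^{-2/(2+\alpha)})$, for all $z_0\in D(A)$. I expect the main obstacle to be the careful handling of the $H^{\pm1/2}(\Gamma_1)$ duality via the Riesz--Fr\'echet identification and the verification of the boundary-node axioms for the wave part, especially the bounded right-inverse of $G_1$; by comparison, the degeneracy of $\R P_2(is)$ at $s=0$ is a minor issue, handled separately as above.
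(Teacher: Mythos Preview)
Your proposal is correct and follows essentially the same route as the paper: realise~\eqref{eq:wave_ac} as an instance of Theorem~\ref{thm:coupled} with the wave equation as the boundary node and the acoustic dynamics as the bounded linear system $(A_2,B_2,C_2,D_2)$, compute $P_2(is)$ explicitly, extract a lower bound for $\R P_2(is)$, and feed the resulting resolvent estimate into Theorem~\ref{thm:sg}. The only differences are presentational: the paper makes the Riesz--Fr\'echet identification $\Phi\colon H^{1/2}(\Gamma_1)\to H^{-1/2}(\Gamma_1)$ explicit (so that $U_1=H^{-1/2}(\Gamma_1)$ and $J=J_1\Phi^*$) and invokes \cite[Prop.~6.2]{MalSta07} for the boundary-node axioms, whereas you work directly with the duality pairing via Remarks~\ref{rem:Riesz} and~\ref{rem:Riesz_dyn}; and the paper takes $E=\{|s|\ge s_0\}$ for arbitrary $s_0>0$ with $\eta(s)=c(1+|s|^2)^{-1}$, while you take $E=\RR\setminus\{0\}$ with an $\eta$ that degenerates at $0$ and compensate using local boundedness of the resolvent near $0$. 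Both approaches handle $s=0$ via Remark~\ref{rem:dyn} and arrive at the same bound $\|(is-A)^{-1}\|\lesssim M_0(|s|)(1+|s|^2)$.
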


\begin{proof}
Define the operators $L_1\colon D(L_1)\subseteq X_1\to X_1$ and $G_1, K_1\colon D(L_1)\subseteq X_1\to H^{-1/2}(\Gamma_1)$ by 
$$L_1 \begin{pmatrix}
y\\v
\end{pmatrix}
=\begin{pmatrix}v\\\Delta y 
\end{pmatrix},\qquad G_1 \begin{pmatrix}
y\\v
\end{pmatrix}
=\partial_\nu y|_{\Gamma_1},\qquad K_1 \begin{pmatrix}
y\\v
\end{pmatrix}
=\Phi(v|_{\Gamma_1})
$$
for $(y,v)^\top$ in the domain $D(L_1)=\{(y,v)^\top\in  H_{\Gamma_0}^1(\Omega)\times H_{\Gamma_0}^1(\Omega): \Delta y\in L^2(\Omega)\}$. Here $\Phi\colon H^{1/2}(\Gamma_1)\to H^{-1/2}(\Gamma_1)$ denotes the unitary map provided by the Riesz--Fréchet theorem.
 Then by \cite[Prop.~6.2]{MalSta07} and Remark~\ref{rem:Riesz} the triple $(G_1,L_1,K_1)$ is an impedance passive boundary node on $(U_1,X_1,U_1)$, where $U_1=H^{-1/2}(\Gamma_1)$. 

Let $U_2=L^2(\Gamma_1)$ and
define $Q=I\in \B(U_2)$.
Write
 $J_1\colon H^{1/2}(\Gamma_1)\to U_2$ and $J_2\colon U_2\to U_1$ for the natural embeddings, and define $J\in\B(U_1,U_2)$ by $J=J_1\Phi^*$. A simple calculation using the fact that $U_1$ is the (conjugate) dual space of $H^{1/2}(\Gamma_1)$ with respect to the pivot space $U_2$ shows that $J_2=\Phi J_1^*$, and hence $J^*J=J_2J_1\Phi^*.$
Thus the operator $A_0$ that governs the evolution of the boundary-damped wave equation is precisely the restriction of $L_1$ to $\Ker(G_1+J^*JK_1)$, and hence the function $M_0$ plays the same role as the function $M_0$ appearing in Theorem~\ref{thm:coupled}.
Let  $X_2=U_2\times U_2=L^2(\Gamma_1)^2$, endowed with the inner product 
$$\langle x_1,x_2\rangle_{X_2}=\langle kp_1,p_2 \rangle_{L^2(\Gamma_1)}+\langle mq_1,q_2 \rangle_{L^2(\Gamma_1)}$$
for $x_j=(p_j,q_j)^\top\in X_2$, $j=1,2$, and define
 the operators $A_2\in\B(X_2)$, $B_2\in\B(U_2,X_2)$, $C_2\in\B(X_2,U_2)$ and $D_2\in\B(U_2)$ by 
$$A_2 \begin{pmatrix}
p\\q
\end{pmatrix}
=\begin{pmatrix}q\\
- m\inv (dq+kp)
\end{pmatrix},\qquad B_2 u
=\begin{pmatrix}0\\
m\inv  u
\end{pmatrix},\qquad C_2 \begin{pmatrix}
p\\q
\end{pmatrix}
=q
$$
and $D_2=0$. It is straightforward to verify that the operator $A$ defined in \eqref{eq:A_def} satisfies the description of  the operator $A$ considered in Theorem~\ref{thm:coupled}.
Note that $\gl -A_2$ is invertible, and in particular surjective, for all $\gl>\|A_2\|$.  Hence for $x=(p,q)^\top\in X_2$ and $u\in U_2$ we have
$$\begin{aligned}
\R\langle A_2 x+B_2u,x\rangle_{X_2}&=-\|d^{1/2}q\|_{L^2(\Gamma_1)}^2+\R\langle  u,q\rangle_{L^2(\Gamma_1)}\\&=-\|d^{1/2}q\|_{L^2(\Gamma_1)}^2+\R\langle q,u\rangle_{U_2}\\&\le \R\langle C_2x+D_2u,u\rangle_{U_2},
\end{aligned}$$
so~\eqref{eq:imped_pass} is satisfied.  It is straightforward to verify that $i\RR\subseteq\rho(A_2)$ and that $\sup_{s\in \RR}\|(is-A_2)\inv\|<\infty$. Let $P_2(\gl)=C_2(\gl-A_2)\inv B_2$ for $\gl\in\rho(A_2)$. Then 
$P_2(is)=s(sd+i(s^2m-k))\inv I$
 and hence
$$
\R P_2(is)= \frac{s^2d}{s^2d+(ms^2-k)^2}I\ge\frac{s^2d_0}{s^2\|d\|_\infty+2s^4\|m\|_\infty^2+2\|k\|_\infty^2}I
$$
for all $s\in \RR$. In particular, $P_2(0)=0$. Let $s_0>0$. Then there exists $c>0$ such that $\R P_2(is)\ge c (1+|s|^2)^{-1}I$ for $|s|\ge s_0$. 
We now apply Theorem~\ref{thm:coupled} with $E=\{s\in\RR:|s|\ge s_0\}$, $M_0$ as above, $M_2$ constant and $\eta(s)=c (1+|s|^2)^{-1}$ for $s\in E$. We conclude that $A$ generates a contraction semigroup $\T$ on $X$, $iE\subseteq\rho(A)$ and $\|(is-A)\inv\|\lesssim M_0(|s|)(1+|s|^2)$ for $|s|\ge s_0$. Since $P_2(0)=0$, Remark~\ref{rem:dyn} shows that $0\in\rho(A)$. Since the resolvent set is open and $s_0>0$ was arbitrary, we deduce that $i\RR\subseteq\rho(A)$. Furthermore, 
 $\|(is-A)\inv\|\lesssim M_0(|s|)(1+|s|^2)$ for all $s\in\RR$. Finally, if $\|(is-A_0)\inv\|\lesssim1+|s|^\alpha$  then $\|(is-A)\inv\|\lesssim 1+|s|^{2+\alpha}$ for all $s\in\RR$. Since the function $s\mapsto M_0(s)(1+s^2)$ has positive increase, the result now follows from~Theorem~\ref{thm:sg}.
\end{proof}

Finally, we apply Theorem~\ref{thm:ac_wave} in order to study the rate of energy decay of waves subject to an acoustic boundary condition as in~\eqref{eq:wave_ac}. We restrict our attention to an illustrative special case.

\begin{ex}
Let $\Omega=\{x\in\RR^2:1<|x|<2\}$ and let $\Gamma_0$, $\Gamma_1$ be the inner and the outer part of the boundary of the annulus $\Omega$, in either order. We consider both cases in turn.
\begin{enumerate}[(a)]
\item If $\Gamma_0=\{x\in\RR^2:|x|=1\}$ is the inner boundary and the acoustic boundary condition is applied along the outer boundary $\Gamma_1=\{x\in\RR^2:|x|=2\}$ then the \emph{geometric control condition} is satisfied in the corresponding boundary-damped wave equation on $\Omega$ (with damping on $\Gamma_1$), and hence the semigroup generated by $A_0$ is exponentially stable; see~\cite{BaLeRa92}. It follows that $i\RR\subseteq\rho(A_0)$ and $\sup_{s\in\RR}\|(is-A_0)\inv\|<\infty$. Hence Theorem~\ref{thm:ac_wave} gives $E(t)\to0$ as $t\to\infty$ for all solutions of~\eqref{eq:wave_ac}, and $E(t)=o(t^{-1})$ as $t\to\infty$ for all classical solutions.
\item If $\Gamma_0=\{x\in\RR^2:|x|=2\}$ is the outer boundary and the acoustic boundary condition is applied along the inner boundary $\Gamma_1=\{x\in\RR^2:|x|=1\}$ of $\Omega$, then $i\RR\subseteq\rho(A_0)$ as before but the presence of \emph{whispering gallery modes} means that one cannot expect a better resolvent bound than $\|(is-A_0)\inv\|\lesssim e^{c|s|}$ for some $c>0$ and all $s\in\RR$; see for instance~\cite{LebRob97}. It follows from  Theorem~\ref{thm:ac_wave} that $E(t)\to0$ as $t\to\infty$ for for all solutions of~\eqref{eq:wave_ac}, and  that $E(t)=O(\log(t)\inv)$ as $t\to\infty$ for all classical solutions.
\end{enumerate}
\end{ex}

\end{document}